\def\rb{\textrm{rb}}
\def\id{\textrm{id}}
\def\char{\textrm{char}\,}
\def\Var{\mathrm{Var}}
\def\Span{\mathrm{Span}}
\def\Aut{\mathrm{Aut}}
\def\Imm{\mathrm{Im}\,}
\def\Rad{\mathrm{Rad}\,}
\theoremstyle{plain}
\newtheorem{thm}{Theorem}[section]
\newtheorem{lem}[thm]{Lemma}
\newtheorem{cor}[thm]{Corollary}
\newtheorem{sta}[thm]{Statement}
\newtheorem{pro}[thm]{Proposition}
\newtheorem{con}[thm]{Conjecture}
\theoremstyle{definition}
\newtheorem{rem}{Remark}[section]
\newtheorem{exm}[rem]{Example}
\newtheorem{prob}[rem]{Problem}
\begin{document}

\hfill{16W99, 17C20 (MSC2010)}

\begin{center}
{\Large
Rota---Baxter operators on unital algebras}

V. Gubarev
\end{center}

\begin{abstract}
We state that all Rota---Baxter operators of nonzero weight
on the Grassmann algebra over a field of characteristic zero are
projections on a subalgebra along another one.
We show the one-to-one correspondence between the solutions
of associative Yang---Baxter equation and Rota---Baxter operators
of weight zero on the matrix algebra $M_n(F)$ (joint with P. Kolesnikov).

We prove that all Rota---Baxter operators of weight zero on
a unital associative (alternative, Jordan) algebraic algebra
over a field of characteristic zero are nilpotent.
We introduce a new invariant for an algebra~$A$ called the RB-index
$\rb(A)$ as the minimal nilpotency index of Rota---Baxter operators
of weight zero on~$A$. We show that $\rb(M_n(F)) = 2n-1$ provided that characteristic of $F$ is zero.

\medskip
{\it Keywords}:
Rota---Baxter operator, Yang---Baxter equation, matrix algebra,
Grassmann algebra, Faulhaber polynomial.
\end{abstract}

\tableofcontents

\section{Introduction}

Given an algebra $A$ and a scalar $\lambda\in F$, where $F$ is a~ground field,
a~linear operator $R\colon A\rightarrow A$ is called a Rota---Baxter operator
(RB-operator, for short) of weight~$\lambda$ if the identity
\begin{equation}\label{RB}
R(x)R(y) = R( R(x)y + xR(y) + \lambda xy )
\end{equation}
holds for all $x,y\in A$.
An algebra~$A$ equipped with a~Rota---Baxter operator is called
a~Rota---Baxter algebra (RB-algebra).

The notion of a Rota---Baxter operator was introduced
by G.~Baxter~\cite{Baxter} in 1960 as formal generalization
of integration by parts formula and then developed
by G.-C.~Rota~\cite{Rota} and others \cite{Atkinson,Cartier}.

In 1980s, the deep connection between constant solutions
of the classical Yang---Baxter equation from mathematical
physics and RB-operators of weight zero on a semi\-simple
finite-dimensional Lie algebra was discovered \cite{BelaDrin82}.
In 2000, M. Aguiar stated \cite{Aguiar00} that solutions
of associative Yang---Baxter equation (AYBE, \cite{Zhelyabin})
and RB-operators of weight zero on any associative algebra are related.

In 2000, the connection between RB-algebras and prealgebras
was found \cite{Aguiar00,GolubchikSokolov}.
Later, this connection was extended and studied for postalgebras,
see, e.g., \cite{BBGN2011,Embedding,GuoMonograph}.

To the moment, applications of Rota---Baxter operators
in symmetric polynomials, quantum field renormalization,
shuffle algebra, etc. were found \cite{Atkinson,Renorm,GuoMonograph,Shuffle,Ogievetsky}.

Also, RB-operators have been studied by their own interest.
RB-opera\-tors were classified on $\mathrm{sl}_2(\mathbb{C})$
\cite{Kolesnikov,KonovDissert,sl2,sl2-0}, $M_2(\mathbb{C})$ \cite{BGP,Mat2},
$\mathrm{sl}_3(\mathbb{C})$ \cite{KonovDissert},
the Grassmann algebra $\mathrm{Gr}_2$ \cite{BGP,HuaExterior},
the 3-dimensional simple Jordan algebra of bilinear form,
the Kaplansky superalgebra $\mathrm{K}_3$~\cite{BGP},
3-dimensional solvable Lie algebras \cite{KonovDissert,LieSolv1,LieSolv2},
low-dimensional Lie superalgebras~\cite{Heisenberg18,Filiform,Heisenberg17,Heisenberg16},
low-dimensional pre-Lie (super)algebras~\cite{preLiesuper,preLie},
low-dimensional semigroup algebras~\cite{Semigroup}.
The classification of RB-operators of special kind
on polynomials, power series and Witt algebra was
found~\cite{Witt,PowerSeries,Monom2,SkewPower,WittNonHom,Monom}.

Author proceeds on the study of Rota---Baxter operators initiated in \cite{BGP,Gub2017}.
Let us give a brief outline of the work.
In \S2 the required preliminaries are stated.
In \S3 it is proved that Aguiar's correspondence
between the solutions of AYBE and RB-operators
of weight zero on the matrix algebra $M_n(F)$ is bijective
(Theorem~\ref{thm4}, joint result with P. Kolesnikov).

Further, in \S4, we consider RB-operators of nonzero weight.
There is a big difference between RB-operators of nonzero weight
and RB-operators of weight zero.
There are few known general constructions for the first ones
such as splitting and triangular-splitting RB-operators.
In contrast, there are a lot of examples for the second ones,
and it is not clear which of them are of most interest.
Thus, we start exposition and study of RB-operators with the nonzero case.
In \cite{BGP}, it was proved that any RB-operator
of nonzero weight on an odd-dimensional simple Jordan algebra~$J$
of bilinear form is splitting, i.e., it is a projection on a subalgebra
along another one provided that $J$ splits into a direct
vector-space sum of these two subalgebras.

Given an algebra~$A$, an RB-operator $R$ on~$A$ of weight~$\lambda$,
and $\psi\in\Aut(A)$, the operator $R^{(\psi)} = \psi^{-1}R\psi$
is an RB-operator of weight~$\lambda$ on~$A$.
Thus, it is reasonable to classify RB-operators
of any weight on an algebra only up to conjugation
with automorphisms.
Since the map $\phi$ defined as $\phi(R) = -(R+\lambda\id)$
preserves the set of all RB-operators of the given weight~$\lambda$,
we study RB-operators of nonzero weight up to the action of~$\phi$.

The combinatorics arisen from the substitution of unit into the RB-identity implies

{\bf Theorem A} (= Theorem~\ref{thm9}).
Let $A$~be a~unital power-associative algebra over a~field
of characteristic zero and $A = F1\oplus N$ (as vector spaces),
where $N$ is a~nil-algebra. Then each RB-operator $R$ of nonzero weight on~$A$
is splitting and up to $\phi$ we have $R(1) = 0$.

By Theorem~A, we get that all RB-operators of nonzero weight on
the Grassmann algebra are splitting (Corollary~\ref{cor4}, \S4.4).

The second main result in the case of nonzero weight gives us
a~powerful tool to study RB-operators of nonzero weight on the matrix algebra.

{\bf Theorem B} (= Theorem~\ref{thm11}).
Given an algebraically closed field $F$ of characteristic zero
and an RB-operator $R$ of nonzero weight~$\lambda$ on $M_n(F)$,
there exists a $\psi\in\Aut(M_n(F))$ such that
the matrix $R^{(\psi)}(1)$ is diagonal and up to $\phi$
the set of its diagonal elements has a form
$\{-p\lambda,(-p+1)\lambda,\ldots,-\lambda,0,\lambda,\ldots,q\lambda\}$
with $p,q\in\mathbb{N}$.

As a corollary, we show that an RB-operator on $M_3(F)$
up to conjugation with an automorphism
preserves the subalgebra of diagonal matrices (Corollary~\ref{Cor:diagM3}, \S4.5).

In \S5, we study RB-operators of weight zero.
In \cite{preLie}, the following question is raised:
``Whether we can give a meaningful ``classification rules''
so that the classification of Rota-Baxter operators can be more ``interesting''?

In Lemma~\ref{lem9} (\S5.1), we present, maybe, the first systematic attempt
to study possible constructions of RB-operators of weight zero.
For example,

{\bf Lemma} (= Lemma~\ref{lem9}(a)).
Given an algebra $A$, a linear operator~$R$
on~$A$ is an RB-operator of weight zero, if $A = B\oplus C$ (as vector spaces)
with $\Imm R$ having trivial multiplication,
$R\colon B\to C$, $R\colon C\to (0)$, $BR(B),R(B)B\subseteq C$.

This construction works for any variety and any dimension of algebras.
Let us list some examples of RB-operators of weight zero arisen from Lemma directly:

\begin{itemize}
\item \cite{Aguiar00-2,Jian}
Let $A$ be an associative algebra and $e\in Z(A)$ be an element such that $e^2 = 0$.
A~linear map $l_e\colon x\to ex$ is an RB-operator of weight~0.

\item
In \cite{Embedding}, given a~variety $\Var$ of algebras and a pre-$\Var$-algebra~$A$,
an enveloping RB-algebra~$B$ of weight zero and the variety $\Var$ for~$A$ was constructed.
By the construction, $B = A\oplus A'$ (as vector spaces),
where $A'$ is a copy of $A$ as a vector space,
and the RB-operator $R$ was defined as follows: $R(a')= a$, $R(a)=0$, $a\in A$.

\item In \cite{Witt}, all homogeneous RB-operators on the Witt algebra
$W = \Span\{L_n \mid n\in\mathbb{Z}\}$ over $\mathbb{C}$ with the Lie product
$[L_m,L_n] = (m-n)L_{m+n}$ were described.
A~homogeneous RB-operator with degree $k\in\mathbb{Z}$ satisfies the condition
$R(L_m)\in\Span\{L_{m+k}\}$ for all $m\in\mathbb{Z}$.
One of the four homogeneous RB-operators on~$W$,

(W2) $R(L_m) = \delta_{m+2k,0}L_{m+k}$, $k\neq0$, $m\in\mathbb{Z}$,

is defined by Lemma.

\item \cite{BGP} All RB-operators of weight zero on the Grassmann algebra $\mathrm{Gr}_2$
over a field $F$ with $\char F\neq2$ are defined by Lemma.

\item \cite{Aguiar00-2,BGP,Mat2}
There are exactly four nonzero RB-operators of weight zero
on $M_2(F)$ over an algebraically closed field~$F$ up to conjugation
with automorphisms $M_2(F)$, transpose and multiplication
on a nonzero scalar. One of them,

(M1) $R(e_{21}) = e_{12}$, $R(e_{11}) = R(e_{12}) = R(e_{22}) = 0$

is defined by Lemma.
\end{itemize}

As we know, we cover in Lemma~\ref{lem9} most of currently known examples of RB-operators of weight zero.

In \S5.2, we find the general property of RB-operators of weight zero on unital algebras.

{\bf Theorem C} (= Theorem~\ref{thm12}).
Let $A$ be a unital associative (alternative, Jordan)
algebraic algebra over a~field of characteristic zero.
Then there exists $N$ such that $R^N = 0$
for every RB-operator~$R$ of weight zero on $A$.

Let us compare the obtained results in both zero and nonzero cases.
Theorems A and B say that the nature of an RB-operator of nonzero weight
on a unital algebra is projective-like, it acts identically on a some subalgebra.
To the contrary, Theorem C says that an RB-operator of weight zero on a unital algebra is nilpotent.

After Theorem~C, we define a new invariant of an algebra~$A$, namely,
Rota---Baxter index (RB-index) $\rb(A)$ as the minimal
natural number with such nilpotency property.
We prove that $\rb(M_n(F)) = 2n-1$ over a field~$F$
of characteristic zero (\S5.4) and study RB-index for the Grassmann algebra (\S5.3),
unital composition algebras and simple Jordan algebras (\S5.5).

Let us clarify the title of the paper.
All three main results (Theorems A, B, and C) come from the combinatorics
arisen from multiple substitutions of the unit into the RB-identity.
So, these results are not valid when an algebra is not unital.
Also, we study typical constructions of RB-operators
on unital algebras.
For completeness, we add few examples of Lie algebras such that
RB-operators on them are arisen from the same constructions.

The proofs essentially involve the results on maximal subspaces
and subalgebras of special kind in the matrix
algebra~\cite{Agore,Meshulam,Wiegmann,Quater}
as well as the structure theory of associative and nonassociative algebras.

\section{Preliminaries}

In~\S2, we give some known examples and state useful properties of Rota---Baxter operators.

Let $F$ stand for the ground field.
By algebra we mean a vector space endowed with a~bilinear
(but not necessary associative) product.
All algebras are considered over~$F$.

Trivial RB-operators of weight $\lambda$ are zero operator and $-\lambda\id$.

\begin{sta}[\cite{GuoMonograph}]\label{sta1}
Given an RB-operator $P$ of weight $\lambda$,

(a) the operator $-P-\lambda\id$ is an RB-operator of weight $\lambda$,

(b) the operator $\lambda^{-1}P$ is an RB-operator of weight 1, provided $\lambda\neq0$.
\end{sta}

Given an algebra $A$, let us define a map $\phi$ on the set of all RB-operators on $A$
as $\phi(P)=-P-\lambda(P)\id$, where $\lambda(P)$ denotes the weight of an RB-operator~$P$.
It is clear that $\phi^2$ coincides with the identity map.

\begin{sta}[\cite{BGP}]\label{sta2}
Given an algebra $A$, an RB-operator $P$ on $A$ of weight $\lambda$,
and $\psi\in\Aut(A)$, the operator $P^{(\psi)} = \psi^{-1}P\psi$
is an RB-operator of weight~$\lambda$ on~$A$.
\end{sta}

The same result is true when $\psi$ is an antiautomorphism of $A$, i.e.,
a bijection from $A$ to $A$ satisfying $\psi(xy) = \psi(y)\psi(x)$ for all $x,y\in A$;
e.g., transpose on the matrix algebra.

\begin{sta}[\cite{GuoMonograph}]\label{sta3}
Let an algebra $A$ split as a vector space
into a direct sum of two subalgebras $A_1$ and $A_2$.
An operator $P$ defined as
\begin{equation}\label{Split}
P(a_1 + a_2) = -\lambda a_2,\quad a_1\in A_1,\ a_2\in A_2,
\end{equation}
is RB-operator of weight~$\lambda$ on~$A$.
\end{sta}

Let us call an RB-operator from Statement~\ref{sta3} as
{\it splitting} RB-operator with subalgebras $A_1,A_2$.
Note that the set of all splitting RB-operators on
an algebra $A$ is in bijection with all decompositions of~$A$
into a~direct sum of two subalgebras $A_1,A_2$.

\begin{rem}
Given an algebra $A$, let $P$ be a splitting RB-operator of weight~$\lambda$
on~$A$ with subalgebras $A_1,A_2$.
Hence, $\phi(P)$ is an RB-operator of weight $\lambda$ and
$$
\phi(P)(a_1+a_2) = -\lambda a_1,\quad a_1\in A_1,\ a_2\in A_2.
$$
So $\phi(P)$ is splitting RB-operator with the same subalgebras $A_1, A_2$.
\end{rem}

\begin{exm}\label{exm1}
(a) \cite{Aguiar00-2,Jian}
Let $A$ be an associative algebra and $e\in A$ an element such that
$e^2 = - \lambda e$, $\lambda\in F$.
A~linear map $l_e\colon x\to ex$ is an RB-operator of weight~$\lambda$
satisfying $l_e^2 + \lambda l_e = 0$.
For $\lambda\neq0$, $l_e$ is a splitting RB-operator on~$A$
with the subalgebras $A_1 = (1-e)A$ and $A_2 = eA$,
the decomposition $A = A_1\oplus A_2$ (as vector spaces) is the~Pierce one.

(b) \cite{BGP} In an alternative algebra~$A$ with an element~$e$ such that
$e^2 = - \lambda e$, $\lambda\in F$, the operator $l_e$ is an RB-operator on~$A$
if $e$~lies in associative or commutative center of~$A$.
\end{exm}

\begin{exm}
In \cite{BBGN2011}, it was proved that every RB-algebra of nonzero weight~$\lambda$
and a~variety $\Var$ under the operations
$x\succ y = R(x)y$,
$x\prec y = xR(y)$,
$x\cdot y = \lambda xy$
is a~post-$\Var$-algebra.
In \cite{Embedding}, given a post-$\Var$-algebra $A$,
its enveloping RB-algebra $B$ of weight~$\lambda$ and the variety
$\Var$ was constructed. By the construction, $B = A\oplus A'$ (as vector spaces),
where $A'$ is a copy of $A$ as of a~vector space,
and the RB-operator $R$ was defined as follows:
$R(a')=\lambda a$, $R(a)=-\lambda a$, $a\in A$.
Note that $R$ is the splitting RB-operator on $B$ with
$A_1 =\Span\{a+a'\mid a\in A\}$ and $A_2 = A$.
\end{exm}

\begin{exm}
Due to \cite{Monom}, all nontrivial monomial RB-operators of nonzero weight on $F[x]$
(i.e., mapping each monomial to a monomial) up to the action of $\phi$ and $\mathrm{Aut}(F[x])$
are splitting with subalgebras $F$ and $\mathrm{Id}\langle x\rangle$.
\end{exm}

\begin{exm}
By \cite{AnBai}, all RB-operators of nonzero weight on the associative algebra
$A = \Span\{e_1,\ldots,e_n\mid e_1e_i = e_i,e_je_i = 0,i=1,\ldots,n,j=2,\ldots,n\}$
are splitting.
\end{exm}

Given an algebra $A$ with the product $\cdot$, define the operations $\circ$, $[,]$ on
the space $A$:
$$
a\circ b=a\cdot b+b\cdot a,\quad [a,b]=a\cdot b-b\cdot a.
$$
We denote the space $A$ endowed with $\circ$ as $A^{(+)}$
and the space $A$ endowed with $[,]$ as~$A^{(-)}$.

\begin{sta}[\cite{BGP}]\label{sta5}
Let $A$ be an algebra.

(a) If $R$ is an RB-operator of weight~$\lambda$ on~$A$,
then $R$ is an RB-operator of weight~$\lambda$ on $A^{(\pm)}$.

(b) If all RB-operators of nonzero weight on $A^{(+)}$
(or $A^{(-)}$) are splitting, then all
RB-operators of nonzero weight on $A$ are splitting.
\end{sta}

\begin{lem}[\cite{BGP,GuoMonograph}]\label{lem1}
Let $A$ be a unital algebra, $P$ be an RB-operator of weight~$\lambda$ on~$A$.

(a) If $\lambda\neq0$ and $P(1)\in F$, then $P$ is splitting with one of subalgebras being unital;

(b) if $\lambda = 0$, then $1\not\in \Imm P$.
Moreover, if $A$ is simple finite-dimensional algebra, $\dim A>1$,
then $\dim (\ker P)\geq2$;

(c) if $\lambda = 0$ and $P(1)\in F$, then $P(1) = 0$,
$P^2 = 0$, and $\Imm P\subset \ker P$;

(d) if $\lambda = 0$ and $A$ is a power-associative algebra,
then $(P(1))^n = n! P^n(1)$, $n\in\mathbb{N}$.
\end{lem}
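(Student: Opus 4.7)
The common strategy is to substitute the unit $1$ (and elements involving it) into the Rota---Baxter identity~\eqref{RB} and read off the consequences. The recurring moves are: setting $x=y=1$ to obtain a scalar equation for $P(1)$; setting $x=1$ alone to convert~\eqref{RB} into a polynomial identity in the operator $P$; and, for part~(d), iterating with $x=u^a$, $y=u^b$.

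For~(a), substituting $x=y=1$ and using $P(1)\in F\cdot 1$ reduces~\eqref{RB} to $P(1)(P(1)+\lambda)=0$, so $P(1)\in\{0,-\lambda\}$. By Statement~\ref{sta1}(a), replacing $P$ by $\phi(P)$ if necessary I may assume $P(1)=0$. Then $x=1$ in~\eqref{RB} yields $P^2+\lambda P=0$. Since $\lambda\neq 0$, the roots $0,-\lambda$ are distinct, so $P$ is semisimple and $A=\ker P\oplus\ker(P+\lambda\id)$. A direct check in~\eqref{RB} shows each summand is closed under multiplication, and $1\in\ker P$ makes it unital, so $P$ has the splitting shape~\eqref{Split}.

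Part~(c) comes from the same first step with $\lambda=0$: if $P(1)=c\cdot 1$ then $c^2=2c^2$, so $c=0$, and then $x=1$ in~\eqref{RB} forces $P^2=0$ and hence $\Imm P\subset\ker P$. For~(d), set $u=P(1)$ and prove $P(u^k)=u^{k+1}/(k+1)$ by induction on $k$, the case $k=0$ being trivial. The inductive step substitutes $x=u^a$, $y=u^b$ with $a+b=k-1$ into~\eqref{RB}, uses power-associativity to identify $u^au^b=u^{k-1}$, applies the induction hypothesis to $P(u^a),P(u^b)$, and compares coefficients to isolate $P(u^k)$. Iterating gives $u^n=n!\,P^n(1)$.

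The subtle point is part~(b). If $1=P(a)$, substituting $x=y=a$ gives $1=P(a)^2=P(2a)=2P(a)=1+1$ in $A$, so $1_A=0$, contradicting unitality. For the second claim, observe that $\Imm P$ is a subalgebra when $\lambda=0$. Assume toward contradiction that $\ker P=Fv$ with $v\neq 0$; setting $x=v$ in~\eqref{RB} gives $0=P(vP(y))$, i.e.\ $v\cdot\Imm P\subset Fv$, and symmetrically $\Imm P\cdot v\subset Fv$. Since $1\notin\Imm P$ and $\Imm P$ has codimension one, $A=F\cdot 1\oplus\Imm P$, whence $Av,vA\subset Fv$. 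Thus $Fv$ is a nonzero two-sided ideal of the simple algebra $A$, forcing $Fv=A$ and $\dim A=1$, a contradiction. The main obstacle is exactly this one-dimensional-kernel case: teasing the right consequence $v\cdot\Imm P\subset Fv$ out of~\eqref{RB}, after which standard simplicity closes the argument.
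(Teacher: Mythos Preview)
Your proof is correct. The paper itself gives no proof of Lemma~\ref{lem1} (it is cited from~\cite{BGP,GuoMonograph}), so there is nothing to compare against; your arguments via substituting $1$ into~\eqref{RB} are the standard ones. Two small points: in part~(b) you should also dispose of the case $\ker P=0$ (immediate, since then $P$ is bijective on the finite-dimensional $A$ and $1\in\Imm P$); and in part~(d) your induction via $P(u^k)=u^{k+1}/(k+1)$ tacitly assumes one may divide by $k+1$, so as written it only covers characteristic zero, whereas the identity $(P(1))^n=n!\,P^n(1)$ is used in the paper (Lemma~\ref{lem10}) also when $\char F=p>0$. A~division-free variant---prove $P^a(1)P^b(1)=\binom{a+b}{a}P^{a+b}(1)$ by induction on $a+b$ using~\eqref{RB} and Pascal's rule, then multiply $u\cdot u^{n-1}$---gives the formula in all characteristics.
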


\begin{lem}\label{lem2}
Let an algebra $A$ be equal to a~direct sum of two ideals $A_1$, $A_2$,
and let $R$~be an RB-operator of weight~$\lambda$ on~$A$.
Then $\mathrm{Pr}_i R$ is the RB-operator of weight~$\lambda$ on $A_i$,
$i=1,2$. Here $\mathrm{Pr}_i$ denotes the projection from $A$ onto $A_i$.
\end{lem}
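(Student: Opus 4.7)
The plan is to write $R = R_1 + R_2$ where $R_i = \mathrm{Pr}_i R$, plug elements $x, y \in A_1$ into the RB-identity for $R$, and read off the $A_1$-component. Everything will follow once we exploit the basic fact that in a direct sum of two ideals the cross products vanish.

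First I would record the preliminary observation that $A_1 A_2 = A_2 A_1 = 0$: if $a \in A_1$ and $b \in A_2$, then $ab$ lies both in $A_1$ (since $A_1$ is an ideal) and in $A_2$ (since $A_2$ is an ideal), hence in $A_1 \cap A_2 = (0)$. In particular, for $x \in A_1$, the product $R_2(x) y$ with $y \in A_1$ is zero because $R_2(x) \in A_2$; similarly $x R_2(y) = 0$. Consequently, for any $x, y \in A_1$,
\begin{equation*}
R(x) R(y) = (R_1(x) + R_2(x))(R_1(y) + R_2(y)) = R_1(x) R_1(y) + R_2(x) R_2(y),
\end{equation*}
and also $R(x) y + x R(y) + \lambda xy = R_1(x) y + x R_1(y) + \lambda xy$, an element of $A_1$ since $A_1$ is a subalgebra.

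Next I would apply the RB-identity for $R$ on $A$ to the pair $(x,y) \in A_1 \times A_1$. Denoting $z = R_1(x) y + x R_1(y) + \lambda xy \in A_1$, the identity reads
\begin{equation*}
R_1(x) R_1(y) + R_2(x) R_2(y) = R(z) = R_1(z) + R_2(z).
\end{equation*}
The left-hand side is the direct-sum decomposition of an element, with $R_1(x) R_1(y) \in A_1$ and $R_2(x) R_2(y) \in A_2$, and the right-hand side is likewise split. Matching the $A_1$-components yields exactly the RB-identity for $R_1$ acting on $A_1$. The argument for $R_2$ on $A_2$ is entirely symmetric.

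I do not anticipate a real obstacle here; the only thing to be careful about is making sure the algebra is not assumed to be associative, so I phrase the cross-annihilation argument purely from the ideal definition (one-sided products already lie in the intersection) rather than invoking associativity. The whole proof is essentially one display-line of bookkeeping after the cross-products are eliminated.
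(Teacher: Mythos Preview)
Your proof is correct and is precisely the straightforward verification the paper has in mind; the paper's own proof is literally the single word ``Straightforward.'' Your decomposition $R = R_1 + R_2$, elimination of cross terms via $A_1 A_2 = A_2 A_1 = (0)$, and component-matching in the RB-identity is the expected argument, and your care not to invoke associativity is appropriate since the lemma is stated for arbitrary algebras.
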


\begin{proof}
Straightforward.
\end{proof}

\begin{thm}[\cite{BGP}]\label{thm1}
All RB-operators on a quadratic division algebra are trivial.
\end{thm}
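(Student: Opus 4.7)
The plan is to reduce the problem to the 2-dimensional case by restricting $R$ to the subalgebra $F[R(1)]$. Set $e = R(1)$; substituting $x = y = 1$ into the RB-identity gives $e^2 = 2R(e) + \lambda e$, hence $R(e) = (e^2 - \lambda e)/2$. If $e \notin F\cdot 1$, the quadratic relation $e^2 = t(e)e - n(e)\cdot 1$ shows that $B := F\cdot 1 + Fe$ is a 2-dimensional subalgebra of $A$ containing $R(e)$; being a subalgebra of a quadratic division algebra, $B$ is itself a 2-dimensional quadratic division algebra, and $R|_B$ is an RB-operator of weight $\lambda$ on $B$. So it suffices to prove the theorem in the 2-dimensional case.

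For the 2-dim step, write $B = F\cdot 1 \oplus Fw$ with $w^2 = a \in F$ a non-square, and set $R(1) = c_0 + c_1 w$, $R(w) = d_0 + d_1 w$. Matching the coefficient of $w$ in the RB-identity with $x = y = w$ gives $(2d_1 + \lambda)c_1 = 0$, and matching it with $x = 1$, $y = w$ gives $c_1^2 a + d_1(d_1 + \lambda) = 0$. Suppose $c_1 \neq 0$. If $\lambda \neq 0$, the first forces $d_1 = -\lambda/2$ and the second then reads $c_1^2 a = \lambda^2/4$, so $a = (\lambda/(2c_1))^2$ is a square in $F$, contradicting the choice of $a$. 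If $\lambda = 0$, the first forces $d_1 = 0$ and the second reduces to $c_1^2 a = 0$, contradicting $c_1, a \neq 0$. Hence $c_1 = 0$, i.e., $R(1) \in F\cdot 1$.

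Applied back to $A$, this shows $e = R(1) \in F\cdot 1$ for every RB-operator on $A$. The conclusion then follows from the preliminary lemmas combined with the observation that any nonzero subalgebra $A'$ of a quadratic division algebra contains $1$: for $0 \neq a \in A'$, either $a \in F^*$ and $1 \in A'$ directly, or $a \notin F\cdot 1$ and then $1 = n(a)^{-1}(t(a)a - a^2) \in A'$ since $n(a) \neq 0$. For $\lambda \neq 0$, Lemma~\ref{lem1}(a) makes $R$ splitting along some $A = A_1 \oplus A_2$ into subalgebras; the observation rules out both $A_i$ being nonzero simultaneously (which would put $1$ in their intersection), so the splitting is trivial and $R \in \{0,\, -\lambda\,\id\}$. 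For $\lambda = 0$, Lemma~\ref{lem1}(c) gives $R^2 = 0$; for any $u = R(x) \in \Imm R$, the RB-identity with both arguments equal to $x$ yields $u^2 = R(ux + xu) \in \Imm R$, so $n(u)\cdot 1 = t(u)u - u^2 \in \Imm R$ as well. If $u \notin F\cdot 1$, then $n(u) \neq 0$ forces $1 \in \Imm R$, contradicting Lemma~\ref{lem1}(b); and if $u \in F\cdot 1$, the same lemma forces $u = 0$. Thus $R = 0$.

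The only substantive calculation is the 2-dimensional coefficient matching; the rest is immediate from the preliminaries and the structure of a quadratic division algebra.
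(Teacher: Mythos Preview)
The paper does not supply its own proof of Theorem~\ref{thm1}; it is quoted from~\cite{BGP}, so there is nothing in this paper to compare your approach against directly. Your argument is sound under the assumption $\mathrm{char}\,F\neq 2$, which you invoke when writing $R(e)=(e^2-\lambda e)/2$ and when normalising the $2$-dimensional extension to $w^2=a\in F$; this hypothesis is standard for quadratic algebras and is almost certainly implicit in~\cite{BGP}.

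One small wording issue: the sentence ``it suffices to prove the theorem in the 2-dimensional case'' overstates what you actually do in the second paragraph, where you only establish $R(1)\in F\cdot 1$; the full triviality of $R$ is then obtained in your third paragraph via Lemma~\ref{lem1}(a)--(c) together with the observation that every nonzero subalgebra of a quadratic division algebra contains~$1$. The logic is intact, only the signposting is slightly off.
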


For RB-operators of weight zero, the last result may be generalized as follows.

\begin{thm}\label{thm2}
If $R$ is an RB-operator of weight zero on an algebraic power-associative algebra $A$ without zero divisors,
then $R = 0$.
\end{thm}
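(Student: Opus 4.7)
My strategy is to assume for contradiction that $R\neq 0$, pick $a\in A$ with $b:=R(a)\neq 0$, and derive $1\in\Imm R$, which directly contradicts Lemma~\ref{lem1}(b). The crucial structural input from the hypotheses is that for every nonzero $b\in A$ the subalgebra $F[b]$ is a field: by power-associativity it is associative and commutative, it is finite-dimensional because $b$ is algebraic, and it inherits the absence of zero divisors from $A$; any finite-dimensional commutative associative integral domain over $F$ is a field. Its unit $e_b$ satisfies $(e_b-1_A)b=0$, which forces $e_b=1_A$, so $1_A\in F[b]$, and therefore the minimal polynomial $p_b(t)=t^k+c_{k-1}t^{k-1}+\cdots+c_1 t+c_0$ of $b$ over $F$ is irreducible; in particular $c_0\neq 0$.

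Next, I will establish by induction on $n$ that $b^n\in\Imm R$ for every $n\geq 1$. The base case is $b=R(a)$. For the inductive step, writing $b^n=R(c_n)$ and applying the RB identity with $x=a$, $y=c_n$,
\begin{equation*}
b^{n+1} \;=\; b\cdot b^n \;=\; R(a)R(c_n) \;=\; R\bigl(R(a)c_n+aR(c_n)\bigr) \;=\; R(bc_n+ab^n)\;\in\;\Imm R.
\end{equation*}

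Finally, substituting into $p_b(b)=0$ and rearranging,
\begin{equation*}
c_0\cdot 1 \;=\; -\sum_{i=1}^{k}c_i\, b^i,
\end{equation*}
and since each $b^i$ belongs to $\Imm R$ by the previous step, dividing by $c_0\neq 0$ yields $1\in\Imm R$, contradicting Lemma~\ref{lem1}(b). The difficulty of this proof is concentrated almost entirely in the first paragraph, namely the structural reduction to ``$F[b]$ is a field containing $1_A$''; once this is secured, the combinatorial heart of the argument is just a short induction followed by substitution into the minimal polynomial. The same pattern (indeed, a special case of it via Lemma~\ref{lem1}(d), where $b=R(1)$ and $b^n=n!\,R^n(1)$) can be recognised as the leitmotif of all the ``unit-substitution'' arguments of the paper.
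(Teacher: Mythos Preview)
Your proof is correct and uses the same three ingredients as the paper's proof: the minimal polynomial of $b=R(a)$, the fact that $\Imm R$ is multiplicatively closed (which you spell out as the induction $b^n\in\Imm R$), and Lemma~\ref{lem1}(b). The only difference is the order in which you deploy ``no zero divisors'' and Lemma~\ref{lem1}(b). You first use the absence of zero divisors to force $c_0\neq 0$ (via the detour ``$F[b]$ is a field, hence the minimal polynomial is irreducible''), and then invoke Lemma~\ref{lem1}(b) for the contradiction. The paper reverses this: it first uses Lemma~\ref{lem1}(b) together with the subalgebra property of $\Imm R$ to conclude $\alpha_0=0$, and \emph{then} uses the absence of zero divisors to factor the relation as $R(x)\cdot y=0$ and contradict the minimality of the degree. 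Your field-theoretic justification of $c_0\neq 0$ is heavier than necessary; the one-line argument ``$c_0=0$ would give $b\cdot(b^{k-1}+\cdots+c_1)=0$, contradicting minimality'' suffices and is exactly what the paper does. Either ordering is fine, and both implicitly assume $A$ is unital (needed for Lemma~\ref{lem1}(b) and for your identity $e_b=1_A$).
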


\begin{proof}
Suppose $R(x)\neq0$ for some $x\in A$.
As $A$ is algebraic, consider the equality
$$
(R(x))^m + \alpha_{m-1}(R(x))^{m-1}+\ldots +\alpha_1 R(x) + \alpha_0 = 0, \quad \alpha_i\in F,
$$
of minimal degree $m\geq1$. By Lemma~\ref{lem1}(b), $\alpha_0 = 0$.
Thus, $R(x)y = 0$ for $y = (R(x))^{m-1} + \alpha_{m-1}(R(x))^{m-2}+\ldots +\alpha_1$.
As $R(x)\neq0$, we have $y = 0$ and $m$ is not minimal.
\end{proof}

\section{Yang---Baxter equation}

In~\S3, we consider connections of Rota---Baxter operators with
different versions of Yang---Baxter equation.
In Theorem~\ref{thm4}, we state the bijection between the solutions
of associative Yang---Baxter equation and RB-operators
of weight zero on the matrix algebra $M_n(F)$ (joint with P. Kolesnikov).

\subsection{Classical Yang---Baxter equation}

Let $L$ be a semisimple finite-dimensional Lie algebra over $\mathbb{C}$.
For $r = \sum a_i\otimes b_i\in L\otimes L$, introduce
classical Yang---Baxter equation (CYBE, \cite{BelaDrin82}) as
\begin{equation}\label{CYBE}
[r_{12},r_{13}]+[r_{12},r_{23}]+[r_{13},r_{23}] = 0,
\end{equation}
where
$$
r_{12} = \sum a_i\otimes b_i\otimes 1,\quad
r_{13} = \sum a_i\otimes 1\otimes b_i,\quad
r_{23} = \sum 1\otimes a_i\otimes b_i
$$
are elements from {$U(L)^{\otimes 3}$}.

The switch map $\tau\colon L\otimes L\to L\otimes L$
acts in the following way: $\tau(a\otimes b) = -b\otimes a$.
The solution $r$ of CYBE is called skew-symmetric if $r + \tau(r) = 0$.
A~linear map $R\colon L\to L$ defined by a~skew-symmetric solution~$r$ of CYBE as
\begin{equation}\label{CYBE2RB}
R(x) = \sum \langle a_i,x\rangle b_i
\end{equation}
is an RB-operator of weight zero on $L$ \cite{BelaDrin82}.
Here $\langle \cdot,\cdot\rangle$ denotes the Killing form on $L$.

\begin{exm}\label{exm5}
There exists unique (up to conjugation and scalar multiple)
nonzero skew-symmetric solution of CYBE on $\mathrm{sl}_2(\mathbb{C})$:
$e\otimes h - h\otimes e$ \cite{Stolin}. It corresponds to the RB-operator
$R(e) = 0$, $R(f) = 4h$, $R(h) = -8e$.
\end{exm}

Given a finite-dimensional semisimple Lie algebra~$L$,
let us call a~linear map~$P$ a~skew-symmetric one if so is
its matrix in the orthonormal basis
(with respect to the Killing form).

\begin{sta}[\cite{GuoMonograph,sl2-0}]\label{staNew}
Given a finite-dimensional semisimple Lie algebra~$L$,
skew-symmetric solutions of CYBE on~$L$ are in one-to-one
correspondence with skew-symmetric Rota---Baxter operators
of weight zero on~$L$.
\end{sta}

An element $r\in L^{\otimes n}$, $n\in\mathbb{N}$, is called
$L$-invariant if $[r,y] = 0$ for all $y \in L$.
Here $L$ acts on $L^{\otimes n}$ by the formula
$[x_1 \otimes \ldots \otimes x_n, y]
= \sum\limits_{i=1}^n x_1 \otimes \ldots \otimes [x_i,y] \otimes \ldots \otimes x_n$, $x_i,y \in L$.

\begin{thm}[\cite{Goncharov2}]\label{thm3}
Let $L$ be a simple finite-dimensional Lie algebra over $\mathbb{C}$
and $r = \sum\limits_i a_i\otimes b_i$
be a non skew-symmetric solution of CYBE such that
$r+\tau(r)$ is $L$-invariant. Then the linear map $R\colon L\rightarrow L$
defined by~(\ref{CYBE2RB}) is an RB-operator of nonzero weight.
\end{thm}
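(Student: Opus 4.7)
The plan is to translate the CYBE into an operator identity for $R$ via contraction with the Killing form, and then eliminate the Killing-adjoint of~$R$ using the hypothesis on $r+\tau(r)$.

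First I would exploit the simplicity of~$L$. By Schur's lemma applied to $L\otimes L\cong L\otimes L^*\cong \mathrm{End}(L)$ as $L$-modules (identification via the Killing form, which is nondegenerate and invariant because $L$ is simple), the space of $L$-invariants in $L\otimes L$ is one-dimensional and spanned by the Casimir element $\Omega=\sum_k e_k\otimes e_k$ with $\{e_k\}$ an orthonormal basis. Since $r$ is not skew-symmetric, $r+\tau(r)\neq 0$, and hence $r+\tau(r)=\mu\Omega$ for some nonzero $\mu\in\mathbb{C}$. Let $R^*$ denote the Killing-adjoint of~$R$; equivalently $R^*(y)=\sum_i\langle b_i,y\rangle a_i$, which is the operator associated via~(\ref{CYBE2RB}) to the swap of~$r$. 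Since $\Omega$ yields the identity operator on~$L$, the tensor identity rewrites at the operator level as $R+R^*=\mu\cdot\id_L$ (with the sign in $\tau$ chosen consistently with Example~\ref{exm5}; the opposite convention only replaces $\mu$ by $-\mu$).

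Next I would apply the functional $\langle\cdot,x\rangle_1\otimes\langle\cdot,y\rangle_2\otimes\id_3$ to each of the three summands of CYBE. Using $[r_{12},r_{13}]=\sum_{ij}[a_i,a_j]\otimes b_i\otimes b_j$ and its analogues, together with invariance of the Killing form ($\langle[u,v],w\rangle=\langle v,[w,u]\rangle$), a short manipulation yields
\begin{align*}
[r_{12},r_{13}] &\longmapsto R([x,R^*(y)]),\\
[r_{12},r_{23}] &\longmapsto -R([R(x),y]),\\
[r_{13},r_{23}] &\longmapsto [R(x),R(y)].
\end{align*}
Summing and invoking CYBE gives $[R(x),R(y)]=R([R(x),y])-R([x,R^*(y)])$. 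Substituting $R^*=\mu\cdot\id_L-R$ and regrouping produces
\[
[R(x),R(y)] = R\bigl([R(x),y]+[x,R(y)]+(-\mu)[x,y]\bigr),
\]
which is exactly the Rota---Baxter identity on~$L$ with weight $-\mu\neq 0$. Hence $R$ is an RB-operator of nonzero weight.

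The main obstacle is careful bookkeeping: fixing the sign in $\tau$ consistently with the notion of ``skew-symmetric'' so that the derived scalar relation $R+R^*=\mu\id_L$ has the right sign, and running the three contractions cleanly while applying Killing-invariance uniformly. The substantive inputs---irreducibility of the adjoint representation of simple~$L$, self-duality of~$L$ via the Killing form, and $\mathrm{ad}$-invariance of the Killing form---are all standard.
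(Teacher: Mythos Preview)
The paper does not supply its own proof of this theorem: it is quoted from~\cite{Goncharov2} and used as a black box. Your argument is a correct and self-contained proof. The only delicate point is the sign convention for~$\tau$; the paper writes $\tau(a\otimes b)=-b\otimes a$, which would give $R-R^*=\mu\,\id$ rather than $R+R^*=\mu\,\id$, but as you note this only flips the sign of~$\mu$ and does not affect the conclusion that the weight is nonzero. Checking against Example~\ref{exm6} (where $r+\tau(r)=\frac{1}{2}h\otimes h+e\otimes f+f\otimes e$ is four times the Casimir and the weight comes out as~$-4$) confirms that your computation is consistent with the paper's conventions once the standard switch map is used.
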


\begin{exm}[\cite{Goncharov2}]\label{exm6}
Let $L = \mathrm{sl}_2(\mathbb{C})$ with the Chevalley basis $e,f,h$.
Consider an element
$$
r = \alpha(h\otimes e - e\otimes h) +\frac{1}{4}h\otimes h
  + e\otimes f\in L\otimes L,\quad \alpha\in \mathbb{C}.
$$
For any $\alpha\in\mathbb{C}$, the tensor $r$ is the
non skew-symmetric solution of CYBE and $r+\tau(r)$ is $L$-invariant.
Due to Theorem~\ref{thm3}, we get the RB-operator~$R$ on $\mathrm{sl}_2(\mathbb{C})$
defined by~$r$ as follows:
$R(e) = 0$, $R(h) = 2h+8\alpha e$, $R(f) = 4(f-\alpha h)$.
The weight of $R$ equals $-4$.
\end{exm}

\subsection{Modified Yang---Baxter equation}

In 1983 \cite{Semenov83}, Semenov-Tyan-Shansky introduced
modified Yang---Baxter equation
(MYBE)\footnote{There is at least one another version of Yang---Baxter equation
defined by M.~Gerstenhaber~\cite{Gerstenhaber} which is also called modified Yang---Baxter equation.}
as follows:
let $L$ be a Lie algebra, $R$ a linear map on $L$, then
\begin{equation}\label{MYBE}
R(x)R(y) - R(R(x)y+xR(y)) = -xy.
\end{equation}

It is easy to check that $R$ is a solution of MYBE if and only if
$R+\id$ is an RB-operator of weight $-2$.
So, there is one-to-one correspondence
(up to scalar multiple and action of $\phi$)
between the set of solutions of MYBE and
RB-operators of nonzero weight.

In \cite{Semenov83}, the general approach for solving MYBE
on a simple finite-dimen\-sional Lie algebra $L$ over
$\mathbb{C}$ was developed.
Applying this method, all solutions of MYBE
(i.e., all RB-operators of nonzero weight) on
$\mathrm{sl}_2(\mathbb{C})$ and $\mathrm{sl}_3(\mathbb{C})$
were found in \cite{KonovDissert}.

\subsection{Associative Yang---Baxter equation}

Let $A$ be an associative algebra, $r = \sum a_i\otimes b_i\in A\otimes A$.
The tensor $r$ is a solution of associative Yang---Baxter equation
(AYBE, \cite{Aguiar00-2,Aguiar01,Polishchuk,Zhelyabin}) if
\begin{equation}\label{AYBE}
r_{13}r_{12}-r_{12}r_{23}+r_{23}r_{13} = 0,
\end{equation}
where the definition of $r_{12},r_{13},r_{23}$ is the same as for CYBE.

A solution $r$ of AYBE on an algebra $A$ is a solution of CYBE on $A^{(-)}$
provided that $r + \tau(r)$ is $A$-invariant \cite{Aguiar01}.
A~tensor $u\otimes v\in A\otimes A$ is said to be $A$-invariant if
$au\otimes v = u\otimes va$ for all $a\in A$. In particular,
each skew-symmetric solution of AYBE is a~skew-symmetric solution of CYBE on $A^{(-)}$.

\begin{sta}[\cite{Aguiar00}]\label{sta4}
Let $r = \sum a_i\otimes b_i$ be a solution of AYBE
on an associative algebra~$A$.
A linear map $P_r\colon A\to A$ defined as
\begin{equation}\label{AYBE2RB}
P_r(x) = \sum a_i x b_i
\end{equation}
is an RB-operator of weight zero on $A$.
\end{sta}

\begin{exm}[\cite{Aguiar00-2}]\label{exm7}
Up to conjugation, transpose and scalar multiple,
all non\-zero solutions of AYBE on $M_2(\mathbb{C})$ are
$(e_{11}+e_{22})\otimes e_{12}$;\
$e_{12}\otimes e_{12}$;\
$e_{22}\otimes e_{12}$;\
$e_{11}\otimes e_{12} - e_{12}\otimes e_{11}$.
\end{exm}

All RB-operators arisen by Statement~\ref{sta4}
from the solutions of AYBE on $M_2(\mathbb{C})$
are exactly all RB-operators of weight zero on $M_2(\mathbb{C})$ (see Theorem~\ref{thm15}, \S5.4).
Let us generalize this fact on each matrix algebra $M_n(F)$,
this result is a joint one with P.~Kolesnikov.

\begin{thm}\label{thm4}
The map $r\to P_r$ is the bijection between the set of the
solutions of AYBE on $M_n(F)$ and the set of RB-operators
of weight zero on $M_n(F)$.
\end{thm}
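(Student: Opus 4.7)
The plan is to reduce the theorem to two observations about natural evaluation maps associated with $M_n(F)$. First, $r \mapsto P_r$ is the restriction of the linear map
$$\Phi\colon M_n(F)^{\otimes 2} \longrightarrow \mathrm{End}_F(M_n(F)), \qquad \Phi(a \otimes b)(x) = axb.$$
Both sides have dimension $n^4$, and the calculation $\Phi(e_{ij} \otimes e_{kl})(e_{pq}) = \delta_{jp}\delta_{qk}\, e_{il}$ shows that every rank-one operator on $M_n(F)$ lies in the image, so $\Phi$ is a linear isomorphism. Consequently $r \mapsto P_r$ is already injective, and every operator on $M_n(F)$ has the form $P_r$ for a unique $r \in M_n(F)^{\otimes 2}$.

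It remains to verify that for such an $r$, the weight-zero RB-identity for $P_r$ is equivalent to AYBE for $r$; Statement~\ref{sta4} supplies one direction. I would introduce the analogous evaluation map
$$\Psi\colon M_n(F)^{\otimes 3} \longrightarrow \mathrm{Hom}_F(M_n(F)^{\otimes 2}, M_n(F)), \qquad \Psi(u \otimes v \otimes w)(x, y) = u x v y w.$$
By the same dimension count ($n^6 = n^6$) together with the observation that $\Psi(e_{ab} \otimes e_{cd} \otimes e_{ef})$ is the functional sending $e_{bc} \otimes e_{de}$ to $e_{af}$ and annihilating every other basis pair, $\Psi$ is also a linear isomorphism. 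Expanding both sides and matching indices yields
$$\Psi(r_{13}r_{12} - r_{12}r_{23} + r_{23}r_{13})(x, y) = -\bigl(P_r(x)P_r(y) - P_r(P_r(x)y + xP_r(y))\bigr),$$
since each of the three double sums arising on the left, after relabelling dummy indices, coincides with exactly one of the three summands on the right up to sign. Injectivity of $\Psi$ then makes AYBE for $r$ equivalent to the weight-zero RB-identity for $P_r$, so $\Phi$ restricts to the desired bijection.

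The only real work is the index bookkeeping that identifies $\Psi$ applied to the AYBE expression with the negative of the RB defect; everything else reflects the fact that $M_n(F)$ is a central simple algebra, which forces the natural evaluation maps out of $M_n(F)^{\otimes k}$ into the corresponding spaces of multilinear operators to be isomorphisms.
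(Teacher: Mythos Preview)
Your argument is correct. The computation
\[
\Psi(r_{13}r_{12})(x,y)=P_r(P_r(x)y),\quad
\Psi(r_{12}r_{23})(x,y)=P_r(x)P_r(y),\quad
\Psi(r_{23}r_{13})(x,y)=P_r(xP_r(y))
\]
is exactly right, so your displayed identity holds and injectivity of $\Psi$ finishes the job.

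The paper's proof is the same content in purely coordinate form: it writes $r=\sum s_{ij}^{kl}e_{ij}\otimes e_{kl}$ and $R(e_{pq})=\sum t_{ip}^{ql}e_{il}$, expands both the AYBE and the weight-zero RB-identity as cubic systems in the coefficients, and observes that the two systems coincide after relabelling indices; the map $\chi\colon s_{ij}^{kl}\mapsto t_{ij}^{kl}$ is then checked to agree with $r\mapsto P_r$. Your version packages the same bookkeeping more conceptually, replacing the explicit index matching by the two structural facts that the evaluation maps $\Phi\colon A^{\otimes 2}\to\mathrm{End}_F(A)$ and $\Psi\colon A^{\otimes 3}\to\mathrm{Hom}_F(A^{\otimes 2},A)$ are isomorphisms for $A=M_n(F)$. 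This makes transparent \emph{why} the bijection is special to $M_n(F)$ (central simplicity forces these evaluation maps to be isomorphisms) and would generalise verbatim to any Azumaya algebra, whereas the paper's coordinate argument is tied to the matrix-unit basis. The trade-off is that the paper's proof is shorter and requires no auxiliary maps, just a direct comparison of two index formulas.
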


\begin{proof}
Given a linear operator $R$ on $M_n(F)$, denote
$R(e_{pq}) = \sum\limits_{i,l}t_{ip}^{ql}e_{il}$.
Then the equation~(\ref{RB}) for $R$ could be rewritten in the form
\begin{equation}\label{RB-tensor}
\sum_{j}\big(t_{ia}^{bj}t_{jc}^{dl}
           - t_{ja}^{bc}t_{ij}^{dl}
           - t_{bc}^{dj}t_{ia}^{jl} \big) = 0.
\end{equation}

Let $r = \sum\limits_{i,j,k,l}s_{ij}^{kl}e_{ij}\otimes e_{kl}$ be a solution of AYBE. So,
\begin{equation}\label{AYBE-tensor-prep}
\begin{gathered}
r_{13}r_{12} = \sum\limits_{p}s_{ij}^{kl}s_{pi}^{st}e_{pj}\otimes e_{kl}\otimes e_{st}, \\
r_{12}r_{23} = \sum\limits_{p}s_{ij}^{kl}s_{lp}^{st}e_{ij}\otimes e_{kp}\otimes e_{st}, \\
r_{23}r_{13} = \sum\limits_{p}s_{ij}^{kl}s_{pr}^{lt}e_{pr}\otimes e_{ij}\otimes e_{kt}.
\end{gathered}
\end{equation}
By substituting the summands from~(\ref{AYBE-tensor-prep}) into~(\ref{AYBE})
and gathering them on the tensor $e_{ij}\otimes e_{kl}\otimes e_{st}$,
we get the equality
\begin{equation}\label{AYBE-tensor}
\sum_{p}\big(s_{pj}^{kl}s_{ip}^{st}
           - s_{ij}^{kp}s_{pl}^{st}
           + s_{kl}^{sp}s_{ij}^{pt} \big) = 0.
\end{equation}
By the interchange of variables, the equations~(\ref{RB-tensor}) and~(\ref{AYBE-tensor}) coincide.
Thus, the map $\chi$ from the set of the solutions of AYBE on $M_n(F)$ to the set
of RB-operators of weight zero on $M_n(F)$ acting as $\chi(r) = T_r$,
where $r = \sum\limits_{i,j,k,l}s_{ij}^{kl}e_{ij}\otimes e_{kl}$
and $T_r(e_{pq}) = \sum\limits_{i,l}s_{ip}^{ql}e_{il}$, is the bijection.
It remains to note that $\chi$ is exactly the map $r\to P_r$.
\end{proof}

\begin{thm}[\cite{Sokolov}]
Up to conjugation, transpose and scalar multiple all
nonzero skew-symmetric solutions of AYBE on $M_3(\mathbb{C})$ are

(A1) $e_{32}\otimes e_{31} - e_{31}\otimes e_{32}$,

(A2) $e_{11}\otimes e_{12} - e_{12}\otimes e_{11}
 + e_{13}\otimes (e_{12}-e_{21}) - (e_{12}-e_{21})\otimes e_{13}
 + (e_{11}+e_{22})\otimes e_{23} - e_{23}\otimes (e_{11}+e_{22})$,

(A3) $e_{22}\otimes e_{23} - e_{23}\otimes e_{22}$,

(A4) $e_{13}\otimes (e_{12}-e_{21}) - (e_{12} - e_{21})\otimes e_{13}
 + (e_{11} + e_{22})\otimes e_{23} - e_{23}\otimes (e_{11} + e_{22})$,

(A5) $(e_{11}+e_{22})\otimes e_{23} - e_{23}\otimes (e_{11}+e_{22})
  + e_{21}\otimes e_{13} - e_{13}\otimes e_{21}$,

(A6) $(e_{11} + e_{33})\otimes e_{23} - e_{23}\otimes (e_{11} + e_{33})
 + e_{11}\otimes e_{13} - e_{13}\otimes e_{11}$,

(A7) $e_{13}\otimes e_{21} - e_{21}\otimes e_{13} + e_{33}\otimes e_{23} - e_{23}\otimes e_{33}$,

(A8) $(e_{11}+e_{33})\otimes e_{23} - e_{23}\otimes(e_{11}+e_{33})$.
\end{thm}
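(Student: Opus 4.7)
My plan is to invoke Aguiar's bijection between skew-symmetric solutions of AYBE on an associative algebra $A$ and pairs $(B,\omega)$, where $B\subseteq A$ is a subalgebra and $\omega$ is a non-degenerate cyclic skew-symmetric bilinear form on $B$ (one satisfying $\omega(xy,z)=\omega(y,zx)$). Under this bijection $r = \sum\omega^{ij}u_i\otimes u_j$, where $\{u_i\}$ is a basis of $B$ and $(\omega^{ij})$ is the inverse of the Gram matrix of $\omega$. The problem then splits into (i) enumerating the subalgebras $B\subseteq M_3(\mathbb{C})$ admitting such a form, up to conjugation and transpose, and (ii) parametrizing the admissible forms on each $B$ modulo scalar multiplication.

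Since $\omega$ is skew and non-degenerate, $\dim B$ is even; and the cyclic property forces the Jacobson radical $N=\Rad B$ to be self-orthogonal (since any ideal of $B$ is paired with its annihilator by $\omega$), so $N$ is Lagrangian and $\dim B = 2\dim N$. Because the maximal dimension of a nilpotent subalgebra of $M_3(\mathbb{C})$ is~$3$, we have $\dim N\in\{1,2,3\}$ and $\dim B\in\{2,4,6\}$. The matching Wedderburn complement $S\subseteq M_3(\mathbb{C})$ with $\dim S=\dim N$ must then be commutative (as $M_2$ already has dimension~$4$), hence $S$ is conjugate to one of $\mathbb{C}\cdot e$, $\mathbb{C}\cdot e\oplus \mathbb{C}\cdot f$ for $e,f$ orthogonal idempotents, or the diagonal subalgebra $D_3=\Span\{e_{11},e_{22},e_{33}\}$.

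For each such $S$ I would use the Pierce decomposition of $M_3(\mathbb{C})$ attached to the primitive idempotents of $S$ to list all $S$-bimodules $N$ of the required dimension satisfying $N^2\subseteq N$, reduce them to normal form using the normalizer of $S$ in $\Aut(M_3(\mathbb{C}))$, and then solve the cyclic-form condition on $B = S\oplus N$. I expect (A1), (A3) and (A8) to come from the $2$-dimensional case (with $S = \mathbb{C}\cdot e$ and a single nilpotent generator $n$ acted on by $e$ from one side); cases (A4)--(A7) from the $4$-dimensional case $S\cong\mathbb{C}^2$ with $N$ a mixed-block bimodule between the two idempotents; and (A2) from the $6$-dimensional case $S=D_3$ with $N$ the strictly upper-triangular matrices.

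The main obstacle is the bookkeeping in step (ii): each $S$-bimodule $N$ carries a finite-dimensional moduli space of cyclic skew forms, and one has to take the quotient by the combined action of the normalizer of $B$ in $\Aut(M_3(\mathbb{C}))$, the transpose antiautomorphism, and scalar multiplication. Verifying that the eight listed tensors form a complete set of orbit representatives---rather than collapsing further or admitting additional orbits---requires tracking invariants such as the rank of $r$ viewed as an endomorphism of $M_3(\mathbb{C})$, the support pattern of $r$ in the standard matrix unit basis, and the isomorphism class of the underlying $(B,\omega)$.
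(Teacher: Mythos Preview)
The paper does not prove this theorem at all: it is quoted verbatim from Sokolov's classification~\cite{Sokolov} and used only as input for Corollary~\ref{cor1}. So there is no ``paper's own proof'' to compare against, and any argument you supply would be a genuine addition.

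That said, your structural analysis contains a real error that would derail the classification. You assert that for a non-degenerate cyclic skew form $\omega$ on a subalgebra $B$, the Jacobson radical $N=\Rad B$ is necessarily isotropic, hence Lagrangian, forcing $\dim B = 2\dim N$ and a commutative Wedderburn complement $S$. Both conclusions fail on the very list you are trying to reproduce. For (A1) the support is $B=\Span\{e_{31},e_{32}\}$, a two-dimensional subalgebra with trivial multiplication; here $N=B$, so $\dim B=\dim N$, not $2\dim N$, and $N$ cannot be isotropic for a non-degenerate form on~$B$. For (A2) the support is $B=\Span\{e_{11},e_{12},e_{21},e_{22},e_{13},e_{23}\}$, the algebra of matrices with zero third row; its radical is $N=\Span\{e_{13},e_{23}\}$ and $B/N\cong M_2(\mathbb{C})$, so $\dim B=6\neq 2\dim N$ and the semisimple part is \emph{not} commutative. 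Your case split $\dim B\in\{2,4,6\}$ with $S\in\{\mathbb{C},\mathbb{C}^2,D_3\}$ therefore misses both of these solutions outright.

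The source of the mistake is the parenthetical justification ``any ideal of $B$ is paired with its annihilator by $\omega$.'' Cyclicity $\omega(xy,z)=\omega(y,zx)$ together with skew-symmetry gives $\omega(xy,z)=-\omega(x,yz)$; this relates $I^{\perp}$ to one-sided annihilators, but it does not force a nilpotent ideal to lie in its own orthogonal. If you want to salvage the approach you must drop the Lagrangian-radical hypothesis and instead classify \emph{all} even-dimensional subalgebras $B\subseteq M_3(\mathbb{C})$ (up to conjugation and transpose) together with their cyclic skew forms, allowing $B$ to be entirely nilpotent or to have $M_2$ as a quotient.
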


We call an RB-operator $R$ on $M_n(F)$ a skew-symmetric one if $R^* = -R$,
where $R^*$ is the conjugate operator relative to the trace form.

\begin{cor}\label{cor1}
Up to conjugation, transpose and scalar multiple all
non\-zero skew-sym\-metric RB-operators on $M_3(\mathbb{C})$ are

(R1) $R(e_{31}) = e_{23}$, $R(e_{32}) = - e_{13}$;

(R2) $R(e_{11}) = - e_{21} - e_{32}$, $R(e_{12}) = e_{11} + e_{31}$,
$R(e_{13}) = e_{12} - e_{21}$, $R(e_{21}) = - e_{31}$, $R(e_{22}) = - e_{32}$,
$R(e_{23}) = e_{11} + e_{22}$;

(R3) $R(e_{23}) = e_{22}$, $R(e_{22}) = - e_{32}$;

(R4) $R(e_{13}) = e_{12} - e_{21}$, $R(e_{12}) = - R(e_{21}) = e_{31}$,
$R(e_{23}) = e_{11} + e_{22}$, $R(e_{11}) = R(e_{22}) = - e_{32}$;

(R5) $R(e_{13}) = e_{12}$, $R(e_{21}) = - e_{31}$, $R(e_{23}) = e_{11} + e_{22}$, $R(e_{11}) = R(e_{22}) = - e_{32}$;

(R6) $R(e_{33}) = e_{32}$, $R(e_{23}) = - e_{33}$, $R(e_{13}) = e_{11} + e_{12}$, $R(e_{11}) = R(e_{21}) = - e_{31}$;

(R7) $R(e_{23}) = -e_{11}-e_{33}$, $R(e_{11}) = R(e_{33}) = e_{32}$;

(R8) $R(e_{13}) = e_{12}$, $R(e_{21}) = -e_{31}$, $R(e_{33}) = e_{32}$, $R(e_{23}) = - e_{33}$.
\end{cor}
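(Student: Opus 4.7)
The plan is to apply Theorem~\ref{thm4} to each of the eight skew-symmetric solutions (A1)--(A8) and organize the resulting RB-operators via the equivalences permitted in the statement. First I would verify that the bijection $r\mapsto P_r$ from Theorem~\ref{thm4} restricts to a bijection between skew-symmetric AYBE solutions and skew-symmetric RB-operators. If $r = \sum_i(a_i\otimes b_i - b_i\otimes a_i)$, then for $x,y\in M_n(F)$, the cyclic property of the trace gives $\mathrm{tr}(P_r(x)y) = \sum_i \mathrm{tr}(x(b_iya_i - a_iyb_i)) = -\mathrm{tr}(xP_r(y))$, hence $P_r^* = -P_r$. The same calculation shows that swapping the two tensor factors of $r$ corresponds to taking the trace-form adjoint of $P_r$, so the restriction really is bijective.

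Next I would record how the three permitted equivalences transfer along $r\mapsto P_r$: inner conjugation by $\psi(x) = gxg^{-1}$ sends $P_r$ to $P_{r'}$ with $r' = (g^{-1}\otimes g^{-1})r(g\otimes g)$, which is again a solution of AYBE; the matrix transpose antiautomorphism sends $P_r$ to $P_{r''}$ with $r'' = \sum_i b_i^t\otimes a_i^t$; and scalar multiplication is immediate. Each of these is an admissible equivalence on both sides, so the restricted bijection descends to equivalence classes of nonzero skew-symmetric objects.

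With this framework in place, the remaining work is computational: for each $r$ among (A1)--(A8), expand $P_r(e_{pq}) = \sum_i a_i e_{pq} b_i$ using $e_{ij}e_{pq} = \delta_{jp}e_{iq}$ and read off the action on the standard basis. For instance, for (A3) with $r = e_{22}\otimes e_{23} - e_{23}\otimes e_{22}$ one obtains $P_r(e_{22}) = e_{23}$, $P_r(e_{32}) = -e_{22}$, and zero on all other basis matrices; applying the negative-transpose equivalence then yields precisely (R3). The same bookkeeping, performed for each of the remaining seven cases, produces (R1)--(R8) in the same order. Completeness follows at once: by the preceding theorem of Sokolov, (A1)--(A8) exhaust all nonzero skew-symmetric AYBE solutions on $M_3(\mathbb{C})$ up to the three equivalences, so via the restricted bijection (R1)--(R8) exhaust all nonzero skew-symmetric RB-operators of weight zero on $M_3(\mathbb{C})$ up to the same equivalences. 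The main obstacle is purely the volume of case-by-case computation in this last step; the conceptual content is entirely provided by Theorem~\ref{thm4} together with the equivariance properties of $r\mapsto P_r$ established in the first two paragraphs.
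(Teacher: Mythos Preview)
Your proposal is correct and follows exactly the route the paper intends: the corollary is stated without proof immediately after Sokolov's classification and Theorem~\ref{thm4}, and your argument simply makes explicit the passage from (A1)--(A8) to (R1)--(R8) via $r\mapsto P_r$, together with the observation that $P_r^* = P_{\tau(r)}$ so that skew-symmetry is preserved and the three equivalences descend. The only content you add beyond the paper is the careful check of equivariance and the sample computation for (A3), which is entirely appropriate.
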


In 2006, K.~Ebrahimi-Fard defined in his Thesis~\cite[p.~113]{FardThesis}
the associative Yang---Baxter equation of weight~$\lambda$.
Later, this equation was twice rediscovered: in 2010 in~\cite{Ogievetsky} and in 2018 in~\cite{AYBE-ext}.
Given an associative algebra~$A$ and a tensor $r\in A\otimes A$,
we say that $r$~is a~solution of associative Yang---Baxter equation
of weight~$\lambda$ (wAYBE) if
\begin{equation}\label{wAYBE}
r_{13}r_{12}-r_{12}r_{23}+r_{23}r_{13} = \lambda r_{13}.
\end{equation}

In~\cite{FardThesis,AYBE-ext},
it was shown that the solutions of wAYBE generate $\epsilon$-unitary bialgebras.
Moreover, the following analogues of Statement~\ref{sta4} and Theorem~\ref{thm4} hold,
the last one was stated as the generalization of~Theorem~\ref{thm4}.

\begin{sta}[\cite{Anghel,FardThesis,AYBE-ext}]
Let $r = \sum a_i\otimes b_i$ be a solution of AYBE of weight~$\lambda$ on an associative algebra~$A$.
A linear map $P_r\colon A\to A$ defined by~\eqref{AYBE2RB} is an RB-operator of weight~$-\lambda$ on~$A$.
\end{sta}

\begin{thm}[\cite{AYBE-ext}]
The map $r\to P_r$ is the bijection between the set of the solutions of AYBE of weight~$\lambda$
on $M_n(F)$ and the set of RB-operators of weight $\lambda$ on $M_n(F)$.
\end{thm}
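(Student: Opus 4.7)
The plan is to reduce the weight-$\lambda$ case to the weight-zero case of Theorem~\ref{thm4} by redoing the coordinate computation of that proof while tracking the extra $\lambda$-terms on both sides; the interchange of free indices established there will then identify them.

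First, I would rewrite the weight-$\lambda$ RB-identity~(\ref{RB}) in the matrix-unit basis. With $R(e_{pq}) = \sum t_{ip}^{ql} e_{il}$ and $x = e_{ab}$, $y = e_{cd}$, the only new contribution compared with the weight-zero case is the summand $\lambda R(xy) = \lambda \delta_{bc} R(e_{ad})$, whose $e_{il}$-coefficient is $\lambda\delta_{bc} t_{ia}^{dl}$. Collecting terms gives the weight-$\lambda$ analogue of~(\ref{RB-tensor}),
\begin{equation*}
\sum_{j}\bigl(t_{ia}^{bj}t_{jc}^{dl} - t_{ja}^{bc}t_{ij}^{dl} - t_{bc}^{dj}t_{ia}^{jl}\bigr) = \lambda\delta_{bc}\,t_{ia}^{dl}.
\end{equation*}

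Next, I would expand~(\ref{wAYBE}) in the basis $\{e_{ij}\otimes e_{kl}\otimes e_{st}\}$ of $M_n(F)^{\otimes 3}$. The left-hand side has already been computed in~(\ref{AYBE-tensor-prep}); for the right-hand side, writing the identity as $1=\sum_m e_{mm}$ yields
\begin{equation*}
\lambda r_{13} = \lambda\sum_{i,j,k,l,m} s_{ij}^{kl}\, e_{ij}\otimes e_{mm}\otimes e_{kl},
\end{equation*}
whose coefficient on $e_{ij}\otimes e_{kl}\otimes e_{st}$ equals $\lambda\delta_{kl}s_{ij}^{st}$. Hence~(\ref{wAYBE}) is equivalent to the weight-$\lambda$ analogue of~(\ref{AYBE-tensor}),
\begin{equation*}
\sum_{p}\bigl(s_{pj}^{kl}s_{ip}^{st} - s_{ij}^{kp}s_{pl}^{st} + s_{kl}^{sp}s_{ij}^{pt}\bigr) = \lambda\delta_{kl}\,s_{ij}^{st}.
\end{equation*}

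Finally, I would invoke the interchange of the six free indices used in the proof of Theorem~\ref{thm4} to identify~(\ref{RB-tensor}) with~(\ref{AYBE-tensor}). That relabeling sends $\delta_{kl}$ to $\delta_{bc}$ and $s_{ij}^{st}$ to $s_{ia}^{dl}$, so the new $\lambda$-terms on the right-hand sides correspond with matching signs. As in Theorem~\ref{thm4}, the dictionary $t_{ip}^{ql}=s_{ip}^{ql}$ between the coefficients of $R$ and of $r$ coincides with the map $r\mapsto P_r$ from~(\ref{AYBE2RB}), so this map is a bijection between solutions of wAYBE of weight~$\lambda$ and RB-operators of the corresponding weight.

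The main (and really the only) obstacle is index bookkeeping: one has to confirm that the relabeling which already matched the left-hand sides in Theorem~\ref{thm4} simultaneously identifies the two Kronecker-delta right-hand sides with the correct sign. No new structural idea beyond Theorem~\ref{thm4} is needed; the computation above merely refines that proof by refusing to discard the $\lambda$-contributions at the beginning.
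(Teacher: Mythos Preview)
Your approach is the natural extension of the proof of Theorem~\ref{thm4}, and since the paper does not supply its own proof here (the result is quoted from~\cite{AYBE-ext}), yours is the argument one would write. Both displayed tensor equations are correct.

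The one genuine slip is precisely the index bookkeeping you flagged as the only obstacle: the claim that the $\lambda$-terms ``correspond with matching signs'' is wrong. The relabeling $(i,a,b,c,d,l;j)\leftrightarrow(i,j,k,l,s,t;p)$ that identifies~(\ref{RB-tensor}) with~(\ref{AYBE-tensor}) sends the three summands of the RB side to the three summands of the AYBE side, but not in order: the first RB term becomes the \emph{second} AYBE term and vice versa, while the third terms correspond. Since the sign patterns are $({+}\,{-}\,{-})$ on the RB side and $({+}\,{-}\,{+})$ on the AYBE side, the two left-hand sides agree only up to a global factor $-1$. Carrying this $-1$ to the right-hand sides shows that wAYBE of weight~$\lambda$ matches RB of weight~$-\lambda$, not $+\lambda$. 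This is exactly what the Statement immediately preceding the theorem asserts ($P_r$ has weight $-\lambda$); the printed theorem appears to contain a sign typo. With this sign recorded, your argument is complete.
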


The analogues of classical or associative Yang---Baxter
equations for alternative and Jordan algebras were defined
in \cite{Goncharov,Zhelyabin}. The connection between solutions
of the Yang---Baxter equation and RB-operators on the Cayley---Dickson algebra $C(F)$
was found in~\cite{BGP}.

\section{Rota---Baxter operators of nonzero weight}

In~\S4, first, we give a list of algebras whose all RB-operators of nonzero weight
are either splitting (odd-dimensional simple Jordan algebras of bilinear form,
the simple Jordan Kaplansky superalgebra $K_3$, \S4.2) or triangular-splitting
($\textrm{sl}_2(\mathbb{C})$, the Witt algebra (for homogeneous RB-operators), \S4.1).

Second, we prove that all RB-operators of nonzero weight on a unital power-associative
algebra~$A$ over a~field of characteristic zero are splitting provided that
$A = F1\oplus N$ (as vector spaces), where $N$ is a~nil-algebra (Theorem~\ref{thm9}).
In Corollary~\ref{cor4} (\S4.4), we get that all RB-operators of nonzero weight on
the Grassmann algebra are splitting.

Third, we state the main result about RB-operators of nonzero weight on the matrix algebra.
Over an algebraically closed field~$F$ of characteristic zero,
given an RB-operator~$R$ of nonzero weight on $M_n(F)$,
there exists $\psi\in\Aut(M_n(F))$ such that $R^{(\psi)}(1)$ is diagonal (Theorem~\ref{thm11}, \S4.5).
As a corollary, we show that any RB-operator on $M_3(F)$
up to conjugation with an automorphism
preserves the subalgebra of diagonal matrices (Corollary~\ref{Cor:diagM3}, \S4.5).

\subsection{Triangular-splitting RB-operators}

Given an algebra~$A$ and a vector space~$B$,
we say that $B$~is a $A$-module, if the action
of~$A$ on~$B$ is defined, i.e., $ab,ba\in B$ for all $a\in A$, $b\in B$.

Consider a construction of RB-operators of nonzero weight which
generalizes Statement~\ref{sta3}.

\begin{sta}[\cite{Guil}]\label{sta6}
Let an algebra $A$ be a direct sum of subspaces $A_-,A_0,A_+$,
moreover, $A_\pm,A_0$ are subalgebras of $A$,
and $A_\pm$ are $A_0$-modules.
If $R_0$ is an RB-operator of weight~$\lambda$ on $A_0$,
then an operator $P$ defined as
\begin{equation}\label{RB:SubAlg2}
P(a_-+a_0+a_+) = R_0(a_0) - \lambda a_+,\quad
a_{\pm}\in A_{\pm},\ a_0\in A_0,
\end{equation}
is an RB-operator of weight~$\lambda$ on~$A$.
\end{sta}

Let us call an RB-operator of nonzero weight
defined by~(\ref{RB:SubAlg2}) as triangular-splitting one
provided that at least one of $A_-,A_+$ is nonzero.

If $A_0 = (0)$, then $P$ is splitting RB-operator on $A$.
If $A_0$ has trivial product, then every linear map on $A_0$ is suitable as $R_0$.

\begin{rem}
Let $P$ be a triangular-splitting RB-operator on an algebra $A$
with subalgebras $A_\pm,A_0$. Then the operator $\phi(P)$ is the
triangular-splitting RB-operator with the same subalgebras,
$$
\phi(P)(a_-+a_0+a_+) = -\lambda a_- + \phi(R_0)(a_0),\quad
a_{\pm}\in A_{\pm},\ a_0\in A_0.
$$
\end{rem}

\begin{exm}\label{exm9}
In \cite{Witt}, all homogeneous RB-operators on the Witt
algebra $W {=} \Span\{L_n \mid n\in\mathbb{Z}\}$
over $\mathbb{C}$ with the Lie product $[L_m,L_n] = (m-n)L_{m+n}$ were described.
A~homogeneous RB-operator with degree $k\in\mathbb{Z}$ satisfies the condition
$R(L_m)\in\Span\{L_{m+k}\}$ for all $m\in\mathbb{Z}$. Due to \cite{Witt},
all nonzero homogeneous RB-operators of weight~1 on~$W$ up to the action~$\phi$
and conjugation with automorphisms of~$W$ are the following:

(WT1) $R(L_m) = 0$, $m\geq -1$, $R(L_m) = -L_m$, $m\leq -2$;

(WT2) $R(L_m) = 0$, $m\geq 1$, $R(L_m) = -L_m$, $m\leq -1$,
$R(L_0) = kL_0$ for some $k\in \mathbb{C}$.
\end{exm}

The RB-operators (WT1) and (WT2) are triangular-splitting
with abe\-lian $A_0 = L_0$.

Let us consider the simple Lie algebra $\mathrm{sl}_2(\mathbb{C})$
with the Chevalley basis $e,f,h$.

\begin{thm}[\cite{KonovDissert,sl2}]\label{thm8}
All nontrivial RB-operators of nonzero weight on $\mathrm{sl}_2(\mathbb{C})$
up to conjugation with an automorphism are the following:

(a) the splitting RB-operator with
$A_1 = \Span\{e + \alpha h\}$,
$A_2 = \Span\{h,f\}$, $\alpha\neq0$;

(b) the triangular-splitting RB-operator with subalgebras
$A_- {=} \Span\{e\}$, $A_+ {=} \Span\{f\}$ and $A_0 = \Span\{h\}$.
\end{thm}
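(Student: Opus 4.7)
The plan is to reduce to weight $\lambda = 1$ by Statement~\ref{sta1}(b) and then argue modulo the involution $\phi$ and conjugation by $\Aut(\mathrm{sl}_2(\mathbb{C}))\cong \mathrm{PSL}_2(\mathbb{C})$. The first structural remark is that $\ker R$, $\ker(R+\id) = \ker\phi(R)$, and $\Imm R$ are all Lie subalgebras of $\mathrm{sl}_2(\mathbb{C})$: the kernel claims follow by applying the RB-identity to pairs of kernel elements, and the image claim is immediate from the right-hand side of the identity. Since the subalgebras of $\mathrm{sl}_2(\mathbb{C})$ are only the zero subalgebra, the various lines, the Borel subalgebras (all conjugate to $\Span\{h,f\}$), and $\mathrm{sl}_2(\mathbb{C})$ itself, the dimensions of these three subspaces are tightly constrained.

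Next I would extract the eigenstructure of $R$. If $x,y$ are eigenvectors with eigenvalues $\mu,\nu \notin \{0,-1\}$ and $\mu+\nu+1 \neq 0$, the RB-identity forces $R([x,y]) = \mu\nu(\mu+\nu+1)^{-1}[x,y]$, so $[x,y]$ is again an eigenvector with a prescribed eigenvalue. Because $\mathrm{sl}_2(\mathbb{C})$ admits no $2$-dimensional abelian subspace of commuting elements, an eigenspace of $R$ with eigenvalue $\mu\notin\{0,-1\}$ must be one-dimensional; combining the three cyclic relations $\mu_i\mu_j/(\mu_i+\mu_j+1)\in\{\mu_1,\mu_2,\mu_3\}$ will then force at most one eigenvalue of $R$ to lie outside $\{0,-1\}$. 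Non-diagonalisable $R$ will be excluded by the same identity applied to a generalised eigenvector paired with an eigenvector.

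With the eigenstructure pinned down, I would split into cases by the pair $(\dim \ker R,\ \dim \ker(R+\id))$, using $\phi$ to arrange $\dim\ker R \geq \dim\ker(R+\id)$. The case $\dim\ker R = 3$ is trivial. When $\ker R$ is a $2$-dimensional Borel, conjugation by an automorphism moves it to $\Span\{h,f\}$, and the remaining $(-1)$-eigenvector, after a further toral conjugation, may be arranged to lie in $\Span\{e+\alpha h\}$, which produces the splitting form~(a) via Statement~\ref{sta3}. When both kernels are one-dimensional, the third eigenvector $x_\mu$ acts semisimply on $\mathrm{sl}_2(\mathbb{C})$ by the adjoint action with eigenvalues $0$ and $\pm 2$ after rescaling, so the three eigenvectors form an $\mathrm{sl}_2$-triple which an automorphism identifies with $(e,h,f)$; this yields the triangular-splitting form~(b) from Statement~\ref{sta6} with $R_0$ a scalar operator on $\Span\{h\}$.

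The main obstacle will be the eigenvalue bookkeeping: ruling out Jordan blocks for $R$, treating the degenerate case $\mu+\nu+1=0$ where the key formula breaks down, and confirming that each eigenspace with eigenvalue outside $\{0,-1\}$ is indeed one-dimensional. Once the admissible eigenvalue patterns are reduced to $(0,0,\mu)$, $(0,-1,\mu)$, and $(-1,-1,\mu)$ (together with their $\phi$-images), bringing each to the canonical form of (a) or (b) is an automorphism normalisation followed by the direct verification afforded by Statements~\ref{sta3} and~\ref{sta6}.
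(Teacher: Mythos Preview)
The paper does not prove this theorem; it is quoted from~\cite{KonovDissert,sl2} without argument. So there is no paper proof to compare against, and your proposal stands on its own.

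Your overall strategy---classify the subalgebras $\ker R$, $\ker(R+\id)$, $\Imm R$ and then control the spectrum of $R$ via the propagation rule $R([x,y]) = \dfrac{\mu\nu}{\mu+\nu+1}[x,y]$ for eigenvectors---is the natural one and will work, but several steps are looser than you suggest. First, your reason for a $\mu$-eigenspace with $\mu\notin\{0,-1\}$ being one-dimensional is misstated: such an eigenspace is not automatically abelian unless $2\mu+1=0$; for $2\mu+1\neq0$ you must bracket into the third eigendirection and iterate the eigenvalue formula to reach a contradiction. Second, your case $\dim\ker R = 2$ does not always produce~(a): if the $(-1)$-eigenvector happens to be nilpotent the normalisation yields $\alpha=0$, which is excluded from~(a) and has to be recognised as the instance of~(b) with $R_0(h)=-h$. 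Third, you do not explain why at least one of $\ker R$, $\ker(R+\id)$ is nonzero---Lemma~\ref{lem7} does not apply since $\mathrm{sl}_2$ is not unital, so this must come from the cyclic eigenvalue constraints themselves (they force $(2\mu_j+1)(\mu_i+1)=0$ for some pair, hence some $\mu\in\{-1,-1/2\}$, and the $-1/2$ branch collapses via the degenerate-denominator argument). Finally, the non-diagonalisable case you flag as an ``obstacle'' is a genuine case, not a formality: pairing a Jordan vector $y$ (with $R(y)=\mu y + x$) against the eigenvector $x$ in the RB-identity gives nontrivial relations in $[x,y]$ that must be chased to exclude blocks of size $\geq 2$ outside eigenvalue $\{0,-1\}$. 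All of this is repairable, but each point needs an explicit computation rather than a one-line dismissal.
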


It is easy to calculate that Example~\ref{exm6} is
a~particular case of (b) (up to conjugation).

\subsection{The simple Jordan algebra of a bilinear form}

Let $J = J_{n+1}(f) = F1\oplus V$ be a direct vector-space sum of $F$
and finite-dimensional vector space~$V$, $\dim V = n > 1$,
and $f$ be a nondegenerate symmetric bilinear form on~$V$.
Under the product
\begin{equation}\label{FormProduct}
(\alpha\cdot1 + a)(\beta\cdot1 + b)
 = (\alpha\beta+f(a,b))\cdot1 + (\alpha b +\beta a),\quad
\alpha,\beta\in F,\ a,b\in V,
\end{equation}
the space $J$ is a simple Jordan algebra \cite{Nearly}.

\begin{thm}[\cite{BGP}]\label{thmJordanNonzero}
Let $J$ be an odd-dimensional simple Jordan algebra of bilinear form
over a field of characteristic not two.
Each RB-operator $R$ of nonzero weight on $A$ is splitting
and $R(1) = 0$ up to $\phi$.
\end{thm}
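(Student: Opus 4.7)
The plan is to reduce the theorem to showing $R(1) \in F \cdot 1$: once this is established, Lemma~\ref{lem1}(a) gives that $R$ is splitting with a unital subalgebra, and passing to $\phi(R) = -R - \lambda\id$ if necessary normalizes $R(1)$ to $0$. Write $R(1) = \alpha \cdot 1 + v$ with $\alpha \in F$, $v \in V$, and for $w \in V$ decompose $R(w) = \gamma(w) \cdot 1 + u(w)$ where $\gamma \colon V \to F$ and $u \colon V \to V$ are linear. The reduction then becomes: force $v = 0$.

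I would extract information by substituting into the RB identity~(\ref{RB}) and separating scalar and vector components using the product formula~(\ref{FormProduct}). Setting $x = y = 1$ gives $u(v) = -\tfrac{\lambda}{2} v$, and setting $x = 1$, $y = w \in V$ yields the vector identity $u^2(w) + \lambda u(w) + f(v,w)\,v = 0$. Plugging $w = v$ and using $u(v) = -\tfrac{\lambda}{2} v$ forces $(f(v,v) - \lambda^2/4)\,v = 0$; assuming $v \neq 0$ for contradiction gives $f(v,v) = \lambda^2/4 \neq 0$, so $V = Fv \oplus v^\perp$ with $f|_{v^\perp}$ nondegenerate. On $v^\perp$ the identity above reduces to $u^2 + \lambda u = 0$, yielding the eigenspace decomposition $v^\perp = W_0 \oplus W_1$ with $W_0 = \ker u|_{v^\perp}$ and $W_1 = \ker(u + \lambda \id)|_{v^\perp}$.

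For the final step, I would substitute $x, y \in v^\perp$: the vector component of the RB identity collapses to $s\,v = 0$ with $s = f(u(x), y) + f(x, u(y)) + \lambda f(x, y)$, and since $v \neq 0$ this forces $s = 0$, i.e.\ $\tilde u := u + \tfrac{\lambda}{2}\id$ is $f$-skew-symmetric on $v^\perp$. But $\tilde u$ acts as $+\tfrac{\lambda}{2}\id$ on $W_0$ and as $-\tfrac{\lambda}{2}\id$ on $W_1$, so polarizing the skew-symmetry on each summand shows that both $W_0$ and $W_1$ are totally isotropic with respect to $f|_{v^\perp}$. The odd-dimensionality of $J$ now bites: $\dim v^\perp = \dim V - 1$ is odd, the maximal totally isotropic dimension in the nondegenerate space $(v^\perp, f|_{v^\perp})$ is therefore $(\dim v^\perp - 1)/2$, and hence $\dim W_0 + \dim W_1 \leq \dim v^\perp - 1 < \dim v^\perp$, contradicting $v^\perp = W_0 \oplus W_1$.

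The main obstacle is extracting the $f$-skew-symmetry of $\tilde u$ cleanly (this is exactly where $v \neq 0$ is used to cancel a stray vector term in the RB identity on $v^\perp$) and recognising that, combined with $u^2 + \lambda u = 0$, it forces two totally isotropic summands whose dimensions cannot fill $v^\perp$ precisely in odd dimension; in even dimension two complementary Lagrangians could exhaust $v^\perp$, so the hypothesis on $\dim J$ is essential and this route does not extend verbatim to the even case.
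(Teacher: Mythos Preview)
The paper does not prove Theorem~\ref{thmJordanNonzero}; it is quoted from~\cite{BGP} with no argument given here, so there is nothing in the present paper to compare your route against.

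Your argument is essentially correct. One logical point needs tidying: you invoke the eigenspace decomposition $v^\perp = W_0 \oplus W_1$ from the relation $u^2 + \lambda u = 0$ on $v^\perp$, but this requires $u(v^\perp)\subseteq v^\perp$, which you have not yet justified at that moment. The repair is immediate once you run your vector-component calculation of the RB identity for \emph{all} $x,y\in V$, not only $x,y\in v^\perp$: the same cancellation gives $\bigl(f(u(x),y)+f(x,u(y))+\lambda f(x,y)\bigr)\,v=0$, hence skew-symmetry of $\tilde u=u+\tfrac{\lambda}{2}\id$ on the whole of $V$. Specializing to $x=v$, $y\in v^\perp$ and using $u(v)=-\tfrac{\lambda}{2}v$ yields $f(v,u(y))=0$, so $u$ preserves $v^\perp$. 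With that in place, $u|_{v^\perp}$ genuinely satisfies $u^2+\lambda u=0$ as an endomorphism of $v^\perp$, the decomposition $v^\perp=W_0\oplus W_1$ is legitimate, and the remainder of your argument (both $W_i$ totally isotropic for the nondegenerate restriction $f|_{v^\perp}$, hence $\dim W_i\le \lfloor \tfrac12\dim v^\perp\rfloor$, impossible when $\dim v^\perp$ is odd) goes through cleanly.
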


Let us choose a basis $e_1$, $e_2$, \ldots, $e_n$ of $V$ such that
the matrix of the form $f$ in this basis is diagonal with
elements $d_1,d_2,\ldots,d_n$ on the main diagonal.
As $f$ is nondegenerate, $d_i\neq0$ for each $i$.

\begin{exm}[\cite{BGP}]
Let $J_{2n}(f)$, $n\geq2$, be the simple Jordan algebra of bilinear form~$f$
over an algebraically closed field of characteristic not two.
Let $R$ be a linear operator on $J_{2n}(f)$
defined by a matrix $(r_{ij})_{i,j=0}^{2n-1}$
in the basis $1,e_1,e_2,\ldots,e_n$
with the following nonzero entries
\begin{gather*}
r_{00} = -3,\quad r_{01} = \sqrt{d_1},\quad r_{10}
 = -\frac{1}{\sqrt{d_1}},\quad r_{jj} = -1,\ j=1,\ldots,2n-1, \\
r_{i\,i+1} = \frac{d_{i+1}}{d_i}\sqrt{ -\frac{d_i}{d_{i+1}} }, \quad
r_{i+1\,i} = - \sqrt{ -\frac{d_i}{d_{i+1}} },\ i=2,\ldots,2n-2.
\end{gather*}
Then $R$ is a non-splitting RB-operator of weight~$2$ on $J_{2n}(f)$.
\end{exm}

The analogue of Theorem~\ref{thmJordanNonzero} holds
for the simple 3-dimensional Jordan superalgebra $K_3$
which is defined over a field of characteristic not two as follows:
$K_3 = A_0\oplus A_1$, $A_0 = \Span\{e\}$ (even part),
$A_1 = \Span\{x,y\}$ (odd part),
$$
e^2 = e, \quad
ex = xe = \frac{x}{2},\quad
ey = ye = \frac{y}{2},\quad
xy = -yx = \frac{e}{2},\quad
x^2 = y^2 = 0.
$$

\begin{thm}[\cite{BGP}]
All RB-operators of nonzero weight on the simple Jordan Kaplansky
superalgebra $K_3$ are splitting.
\end{thm}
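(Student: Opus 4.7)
The plan is to classify all RB-operators of weight~$1$ on~$K_3$ directly (the general nonzero-weight case reduces to this by Statement~\ref{sta1}(b)) and then to exhibit each such operator as splitting. Write~$R$ by its matrix in the basis $\{e,x,y\}$:
$$
R(e) = \alpha e + \beta x + \gamma y,\quad R(x) = a e + b x + c y,\quad R(y) = d e + f x + g y.
$$
The key computational observation is that every element $u = \mu e + \nu x + \rho y$ of~$K_3$ satisfies $u^2 = \mu u$, an immediate consequence of $x^2 = y^2 = 0$, $xy + yx = 0$ and $ex+xe = x$, $ey+ye = y$. In particular $R(e)^2 = \alpha R(e)$ and $R(x)^2 = a R(x)$, so the squaring substitutions in~\eqref{RB} become linear.

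Next I would substitute the pairs $(e,e), (x,x), (y,y), (x,y), (e,x), (e,y)$ into~\eqref{RB} and extract the resulting polynomial relations. The pair $(e,e)$ yields
$$
\beta R(x) + \gamma R(y) = -(\alpha+1) R(e);
$$
the pairs $(x,x)$ and $(y,y)$ give $c\cdot R(e) = 0$ and $f\cdot R(e) = 0$; and $(x,y)$ gives $(b+g+1)\beta = (b+g+1)\gamma = 0$ together with $bg-cf = (b+g+1)\alpha$. To cut down the parameter space I would exploit $\Aut(K_3)$, which permutes the 2-parameter family of nonzero idempotents $\{e+\beta x+\gamma y\}$ of~$K_3$ and contains an $SL_2$-action on~$\Span\{x,y\}$ preserving the pairing $xy=-yx=e/2$, together with the involution~$\phi$; this brings~$R(e)$ into one of a short list of canonical forms in which the residual system can be solved explicitly by standard elimination.

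For each surviving solution the final step is to produce a vector-space decomposition $K_3 = A_1\oplus A_2$ into subalgebras with $R|_{A_1} = 0$ and $R|_{A_2} = -\id$, equivalently the relation $R^2 + R = 0$ together with the $0$- and $(-1)$-eigenspaces being closed under the product of~$K_3$. The main obstacle is ruling out the triangular-splitting alternative of Statement~\ref{sta6}: one has to verify in each case that the candidate~$R$ cannot arise from a nontrivial decomposition $K_3 = A_- \oplus A_0 \oplus A_+$ with both $A_\pm$ nonzero, so that the classification contains only genuinely splitting operators. One might hope to bypass this analysis by Statement~\ref{sta5}(b), noting that $K_3^{(-)}$ is the 3-dimensional Heisenberg Lie algebra (since $[e,\cdot] = 0$ and $[x,y] = e$); but Heisenberg admits non-splitting RB-operators of nonzero weight, such as $R(e) = 0$ and $R(x) = ae$ with $a\neq 0$, that do not extend to RB-operators on~$K_3$, so a direct classification on~$K_3$ is unavoidable.
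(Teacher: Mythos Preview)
The paper does not prove this theorem; it is quoted from~\cite{BGP} without proof, so there is no argument in the present text to compare against. Your overall strategy---reduce to weight~$1$, write~$R$ as a $3\times 3$ matrix, substitute basis pairs into~\eqref{RB}, exploit $\Aut(K_3)$ and~$\phi$ to normalize, and then verify $R^2+R=0$ with the eigenspaces closed under the product---is the natural direct approach and is presumably close to what~\cite{BGP} carries out.

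Two concrete issues with your sketch. First, the pairs $(x,x)$ and $(y,y)$ do \emph{not} yield $c\,R(e)=0$ and $f\,R(e)=0$. For $(x,x)$ one has $R(x)x+xR(x)+x^2=ax$, since the contributions $cy\cdot x=-\tfrac{c}{2}e$ and $x\cdot cy=\tfrac{c}{2}e$ cancel and $x^2=0$; thus both sides of~\eqref{RB} equal $aR(x)$ and the relation is vacuous (and symmetrically for $(y,y)$). The constraints you need will have to come from the $(e,x)$ and $(e,y)$ substitutions, which you list but do not expand. Second, your paragraph about ``ruling out the triangular-splitting alternative'' is misconceived: once you have classified all RB-operators and checked that each satisfies $R^2+R=0$ with $\ker R$ and $\ker(R+\id)$ subalgebras, the proof is complete. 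A~triangular-splitting operator that is not splitting would already fail $R^2+R=0$, so there is no separate alternative to exclude.
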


\subsection{Sum of fields}

\begin{pro}[\cite{AnBai,Braga}]\label{pro1}
Let $A = Fe_1\oplus Fe_2\oplus\ldots\oplus Fe_n$
be the direct sum of copies of a~field~$F$ with $e_i e_j = \delta_{ij}e_i$.
A linear operator $R(e_i) = \sum\limits_{k=1}^n r_{ik}e_k$,
$r_{ik}\in F$, is an RB-operator of weight~1 on~$A$
if and only if the following conditions are satisfied:

(SF1) $r_{ii} = 0$ and $r_{ik}\in\{0,1\}$
or $r_{ii} = -1$ and $r_{ik}\in\{0,-1\}$ for all $k\neq i$;

(SF2) if $r_{ik} = r_{ki} = 0$ for $i\neq k$,
then $r_{il}r_{kl} = 0$ for all $l\not\in\{i,k\}$;

(SF3) if $r_{ik}\neq0$ for $i\neq k$,
then $r_{ki} = 0$ and
$r_{kl} = 0$ or $r_{il} = r_{ik}$ for all $l\not\in\{i,k\}$.
\end{pro}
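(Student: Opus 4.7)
The plan is to reduce the RB-identity to a system of scalar equations on the coefficients $r_{ik}$ by evaluating it on pairs of basis vectors. Because $R$ is linear and $A$ is spanned by the $e_i$, it is enough to impose (\ref{RB}) for $x=e_i$, $y=e_j$ with all $i,j$. First I would compute $R(e_i)R(e_j)=\sum_k r_{ik}r_{jk}e_k$, using $e_ke_l=\delta_{kl}e_k$, as well as $R(e_i)e_j=r_{ij}e_j$, $e_iR(e_j)=r_{ji}e_i$, and $e_ie_j=\delta_{ij}e_i$. Gathering the coefficient of $e_k$ on both sides and noting that the $e_k$ are linearly independent, the RB-identity at weight~$1$ becomes equivalent to the scalar system
$$
r_{ik}r_{jk}=r_{ij}r_{jk}+r_{ji}r_{ik}+\delta_{ij}\,r_{ik}\qquad (i,j,k=1,\dots,n).
$$

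Next I would split the analysis according to which of $i,j,k$ coincide. Setting $i=j$ collapses the identity to $r_{ik}(r_{ik}-2r_{ii}-1)=0$; specializing $k=i$ forces $r_{ii}(r_{ii}+1)=0$, hence $r_{ii}\in\{0,-1\}$, and for $k\neq i$ we obtain $r_{ik}\in\{0,\,2r_{ii}+1\}$, which is precisely~(SF1). For $i\neq j$, setting $k=i$ (and, symmetrically, $k=j$) yields $r_{ij}r_{ji}=0$, so that at most one of $r_{ij}, r_{ji}$ is nonzero — this is the vanishing clause built into~(SF3). The only nontrivial remaining case is that of three pairwise distinct indices $i,j,k$, where the identity reads $r_{jk}(r_{ik}-r_{ij})=r_{ji}r_{ik}$. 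I would break this into subcases: if $r_{ij}=r_{ji}=0$ one gets $r_{ik}r_{jk}=0$, giving~(SF2); if $r_{ij}\neq 0$ (so $r_{ji}=0$) one gets $r_{jk}=0$ or $r_{ik}=r_{ij}$, giving~(SF3); the symmetric subcase $r_{ji}\neq 0$ is handled by swapping $i\leftrightarrow j$.

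For the converse, I would take (SF1)–(SF3) as a standing hypothesis and run the case analysis in reverse, checking the displayed scalar equation in each of the four regimes (three equal indices; $i=j\neq k$; $i\neq j$ with $k\in\{i,j\}$; $i,j,k$ pairwise distinct). In the last regime one uses (SF2) when both $r_{ij}$ and $r_{ji}$ vanish, and (SF3) otherwise, to kill the appropriate factor. All the verifications are direct because (SF1) confines the coefficients $r_{ik}$ to very few discrete values.

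The argument is essentially bookkeeping: there is no serious analytic obstruction. The point requiring the most care is the asymmetry between (SF2) and (SF3)—the underlying equation is symmetric in $(i,j)$ while the formulation of the conditions is not, so one must check that the case $r_{ij}=0$, $r_{ji}\neq 0$ is captured by applying~(SF3) with the roles of $i$ and $j$ interchanged, and that the \emph{definition} of (SF3) silently encodes $r_{ij}r_{ji}=0$ for $i\neq j$.
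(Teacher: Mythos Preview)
Your proposal is correct and follows essentially the same approach as the paper: the paper's proof simply records the scalar equations $r_{kl}(1+2r_{kk}-r_{kl})=0$ and $r_{ik}r_{kl}+r_{ki}r_{il}=r_{il}r_{kl}$ for $i\neq k$, which are exactly your displayed identity specialized to $i=j$ and $i\neq j$ (after relabelling $j\mapsto k$, $k\mapsto l$), and then asserts that the proposition follows. Your case analysis merely spells out what the paper leaves implicit.
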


\begin{proof}
The RB-identity~(\ref{RB}) is equivalent to the equalities
$$
r_{kl}(1+2r_{kk}-r_{kl}) = 0,\quad
r_{ik}r_{kl}+r_{ki}r_{il} = r_{il}r_{kl},\ i\neq k,
$$
from which Proposition~\ref{pro1} follows.
\end{proof}

\begin{exm}[\cite{Atkinson,Miller69}]\label{exm10}
The following operator is an RB-operator of weight~1 on~$A$:
$$
R(e_i) = \sum\limits_{l=i+1}^s e_l,\ 1\leq i<s,\quad
R(e_s) = 0,\quad
R(e_i) = -\sum\limits_{l=i}^{n} e_l,\ s+1\leq i\leq n.
$$
Note that $R(1) = 0e_1+1e_1+\ldots+(s-1)e_s-e_{s+1}-2e_{s+2}-\ldots-(n-s)e_n$.
\end{exm}

\begin{rem}
From (SF2) and (SF3) it easily follows that
$r_{ik}r_{ki} = 0$ for all $i\neq k$.
In~\cite{AnBai}, the statement of Proposition~\ref{pro1}
was formulated with this equality and (SF1)
but without (SF2) and (SF3).
That is why the formulation in \cite{AnBai} seems to be not complete.
\end{rem}

\begin{rem}
The sum of fields in Proposition~\ref{pro1} can be infinite.
\end{rem}

More about RB-operators of nonzero weight on a finite direct
sum of copies of a~field, including counting of all of them, splitting ones, etc.
see in~\cite{Gub2018}.

\subsection{Grassmann algebra}

\begin{lem}\label{lem3}
Let $A$ be a unital power-associative algebra over a field
of characteristic zero and let $R$ be an RB-operator
of nonzero weight on~$A$. If $R(1)$ is nilpotent, then $R(1) = 0$
and $R$ is splitting.
\end{lem}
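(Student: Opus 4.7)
The plan is to work inside the commutative associative subalgebra $F[a]\subset A$ generated by $a=R(1)$ (a subalgebra since $A$ is power-associative), and exhibit an explicit Faulhaber-polynomial identity that collapses when $a$ is nilpotent. First, applying the RB identity with $x=1$ and $y=R^{n-1}(1)$ gives
$a\cdot R^n(1)=R(a\cdot R^{n-1}(1))+R^{n+1}(1)+\lambda R^n(1)$,
and induction on $n$ then yields $a\cdot R^n(1)=(n+1)R^{n+1}(1)+n\lambda R^n(1)$. Rearranging in the commutative ring $F[a]$ and iterating produces the closed form
\[
n!\,R^n(1)=a(a-\lambda)(a-2\lambda)\cdots(a-(n-1)\lambda),
\]
and inverting this triangular system expresses $a^n=\sum_{k=1}^n k!\,S(n,k)\,\lambda^{n-k}R^k(1)$, where $S(n,k)$ denotes the Stirling number of the second kind.

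Suppose for contradiction that $a\ne 0$, and let $s\ge 2$ be its nilpotency index. Applying $R$ to the relation $a^s=0$,
\[
0=R(a^s)=\sum_{k=1}^{s} k!\,S(s,k)\,\lambda^{s-k}\,R^{k+1}(1)=\lambda^{s+1}\,P(a/\lambda),
\]
where, using the discrete summation $\sum_{j=0}^{y-1}j(j-1)\cdots(j-k+1)=\frac{1}{k+1}y(y-1)\cdots(y-k)$, the polynomial
\[
P(y)=\sum_{k=1}^s\frac{S(s,k)}{k+1}\,y(y-1)\cdots(y-k)=\sum_{j=0}^{y-1}j^s
\]
is the classical Faulhaber polynomial, equivalently $P(y)=\frac{1}{s+1}\sum_{k=0}^s\binom{s+1}{k}B_k y^{s+1-k}$.

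Since $P(0)=P(1)=0$, factor $P(y)=y^{\mu}(y-1)^{\nu}Q(y)$ with $Q(0)\ne 0$. Faulhaber's formula reads off the coefficient of $y$ in $P$ as $B_s$ and the coefficient of $y^2$ as $sB_{s-1}/2$; for any $s\ge 2$ at least one of these is nonzero in characteristic zero (when $s$ is even, $B_s\ne 0$; when $s\ge 3$ is odd, $B_{s-1}\ne 0$ because $s-1$ is a positive even integer). Hence $\mu\in\{1,2\}$, and in particular $\mu\le s-1$. In the local algebra $F[a]\cong F[x]/(x^s)$ the elements $a/\lambda-1$ and $Q(a/\lambda)$ have nonzero constant term and are therefore units (as $a/\lambda$ is nilpotent), so $\lambda^{s+1}P(a/\lambda)=0$ collapses to $a^{\mu}=0$, contradicting $a^{s-1}\ne 0$. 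Hence $R(1)=0\in F$, and Lemma~\ref{lem1}(a) concludes that $R$ is splitting.

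The main obstacle is step two: recognizing the Stirling-weighted sum of falling factorials as the Faulhaber polynomial, together with the Bernoulli-number analysis in step three giving $\mu\le 2$. The remaining manipulations are direct applications of the RB identity.
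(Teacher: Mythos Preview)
Your proof is correct, and it takes a genuinely different route from the paper's argument.

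Both proofs start from the same Stirling-number identities~(\ref{Stirling1}) and~(\ref{Stirling2}) relating $R^k(1)$ and $a^k$, but the contradiction is reached differently. The paper writes down~(\ref{Stirling2}) for $n=t+1$ and $n=t+2$ (where $t$ is the largest power with $a^t\ne 0$, so $t=s-1$ in your notation), then performs two successive eliminations---acting by $(t+2)R$ and subtracting, then scaling by $(t+2)/2$ and subtracting again---to produce a nontrivial linear dependence among $R(1),\ldots,R^t(1)$, contradicting their linear independence. This is a hands-on computation needing only $S(n,n)$, $S(n,n-1)$, $S(n,n-2)$.

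You instead apply $R$ once to the single relation $a^s=0$, and then make the conceptual identification of the resulting expression $\sum_k \frac{S(s,k)}{k+1}(a/\lambda)^{\underline{k+1}}$ with the Faulhaber polynomial $\sum_{j=0}^{y-1}j^s$ evaluated at $y=a/\lambda$. The contradiction then reduces to the classical fact that the multiplicity of $0$ as a root of the Faulhaber polynomial is at most~$2$, which you extract from the nonvanishing of the even Bernoulli numbers. Working in the local ring $F[a]\cong F[x]/(x^s)$ to invert the factors with nonzero constant term is a clean way to finish.

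What each approach buys: the paper's elimination is entirely self-contained and avoids any appeal to Bernoulli numbers. Your argument is shorter and more structural, and it makes explicit the link to Faulhaber polynomials that the paper in fact exploits later (Lemma~\ref{lem5}) in the analysis of $M_n(F)$ but not here; in that sense your proof unifies the two parts of the paper. The trade-off is that your argument imports the nontrivial input $B_{2k}\ne 0$, whereas the paper's proof needs nothing beyond elementary binomial identities.
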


\begin{proof}
The following formulas hold in all associative RB-algebras of weight~$\lambda$~\cite{GuoMonograph}:
\begin{gather}
n! R^n(1)  = \sum\limits_{k=1}^n (-1)^{n-k}\lambda^{n-k}s(n,k)(R(1))^k, \label{Stirling1}\\
(R(1))^n   = \sum\limits_{k=1}^n k!\lambda^{n-k} S(n,k)R^k(1), \label{Stirling2}
\end{gather}
where $s(n,k)$ and $S(n,k)$ are Stirling numbers
of the first and second kind respectively.
The proof of the formulas~(\ref{Stirling1}),~(\ref{Stirling2})
for power-associative algebras is absolutely the same as for
associative ones (see, e.g., \cite[Thm. 3.1.1]{GuoMonograph}).

Suppose that $t$ is maximal nonzero power of $R(1)$.
If $t = 0$, then we are done by Lemma~\ref{lem1}(a). Suppose that $t\geq1$.
Then all elements $R(1)$, $(R(1))^2$, $\ldots$, $(R(1))^t$ are linearly independent.
By~(\ref{Stirling1}) and the properties of Stirling numbers,
elements $R(1),R^2(1),\ldots$, $R^t(1)$ are also linearly independent.

The number $S(n,k)$ equals the number of ways of partitioning a set
of $n$ elements into $k$ non-empty subsets. It is well-known that
\begin{equation}\label{Stirling2Prop}
S(n,n) = 1, \quad
S(n,n-1) = \binom{n}{2}, \quad
S(n,n-2) = \binom{n}{3} + 3\binom{n}{4}.
\end{equation}

Without loss of generality, assume that $\lambda = 1$.
We apply~(\ref{Stirling2Prop}) to write down~(\ref{Stirling2}) for $t+1$ and $t+2$:
\begin{multline}\label{NilpR(1)t+1}
0 = (t+1)!R^{t+1}(1) + t! \binom{t+1}{2}R^t(1) \\
 + (t-1)!\left(\binom{t+1}{3} + 3\binom{t+1}{4}\right)R^{t-1}(1)
 + \ldots,
\end{multline}
\begin{multline}\label{NilpR(1)t+2}
0 = (t+2)!R^{t+2}(1) + (t+1)!\binom{t+2}{2} R^{t+1}(1) \\
 + t!\left(\binom{t+2}{3} + 3\binom{t+2}{4} \right)R^t(1)
 + \ldots\!.
\end{multline}

Act $(t+2)R$ on~(\ref{NilpR(1)t+1}) and subtract
the result from~(\ref{NilpR(1)t+2}) to get the equality
\begin{equation}\label{NilpR(1)Conseq}
0 = t!\binom{t+2}{2} R^{t+1}(1) + (t-1)!\left(\binom{t+2}{3}
  + 6\binom{t+2}{4} \right)R^t(1)+\ldots = 0.
\end{equation}
Let us multiply~(\ref{NilpR(1)t+1}) by $(t+2)/2$ and
subtract it from~(\ref{NilpR(1)Conseq}):
$$
0 = -\frac{1}{2}\binom{t+2}{3} R^t(1) + \sum\limits_{i=1}^{t-1}\alpha_i R^i(1),\quad \alpha_i\in F,
$$
a~contradiction with linear independence of the elements
$R(1),\ldots,R^t(1)$.
\end{proof}

\begin{cor}
Let $A$ be a unital power-associative algebra over
a~field of characteristic zero. Any RB-operator $R$ on $A$
of nonzero weight such that $\Imm(R)$ is a nil-algebra is splitting.
\end{cor}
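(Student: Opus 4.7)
The plan is to observe that this is a direct consequence of Lemma~\ref{lem3}, applied once we verify that $R(1)$ is nilpotent.

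First, I would note that $\Imm R$ is a subalgebra of $A$. Indeed, the RB-identity~(\ref{RB}) rewrites as $R(x)R(y)=R\bigl(R(x)y+xR(y)+\lambda xy\bigr)$, so the product of any two elements of $\Imm R$ again lies in $\Imm R$. Since $A$ is unital, $R(1)\in\Imm R$, and by hypothesis $\Imm R$ is a nil-algebra, hence every element of $\Imm R$ is nilpotent (a notion that is well-defined because $A$, and therefore $\Imm R$, is power-associative). In particular, $R(1)$ is nilpotent.

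Having established that $R(1)$ is nilpotent, Lemma~\ref{lem3} applies directly: it yields $R(1)=0$ and the conclusion that $R$ is splitting.

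I expect no real obstacle here: the only point requiring any care is that ``nil-algebra'' and ``nilpotent element'' make unambiguous sense, which is guaranteed by power-associativity of $A$ and its subalgebra~$\Imm R$. Everything else is a one-line invocation of the preceding lemma.
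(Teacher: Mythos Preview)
Your proposal is correct and matches the paper's intended argument: the paper states this corollary immediately after Lemma~\ref{lem3} without proof, treating it as the obvious consequence you describe (namely, $R(1)\in\Imm R$ is nilpotent, so Lemma~\ref{lem3} applies). Your observation that $\Imm R$ is a subalgebra is not strictly needed here, but it does no harm.
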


Now, we are ready to state the first of the three main results of the paper.
We find sufficient conditions under which a unital algebra has only splitting RB-operators.

\begin{thm}\label{thm9}
Let $A$ be a unital power-associative algebra over a field
of characteristic zero and $A = F1\oplus N$ (as vector spaces),
where $N$ is a nil-algebra. Then each RB-operator~$R$ on~$A$ of
nonzero weight is splitting and (up to $\phi$) we have $R(1) = 0$.
\end{thm}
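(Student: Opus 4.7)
My plan is to reduce to Lemma~\ref{lem1}(a) by showing $R(1)\in F$. Once this is established, $R$ is splitting with one unital subalgebra; the splitting formula $P(a_1+a_2)=-\lambda a_2$ forces $R(1)\in\{0,-\lambda\}$ depending on which summand contains $1$, and after replacing $R$ by $\phi(R)$ if necessary we get $R(1)=0$. Write $R(1)=\alpha\cdot 1+n_0$ with $\alpha\in F$ and $n_0\in N$; since $N$ is nil, $n_0^{m+1}=0$ for some minimal $m\geq 0$, and my hope is to prove $m=0$.

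The first observation is that the subalgebra $C:=F[R(1)]$, commutative and associative by power-associativity, is $R$-invariant and finite-dimensional with basis $1,n_0,\ldots,n_0^m$. Indeed, iterating the RB-identity at $x=y=1$ gives the Spitzer-type identity $n!\,R^n(1)=\prod_{j=0}^{n-1}(R(1)-j\lambda)\in C$, and then \eqref{Stirling2} yields $R(R(1)^k)=\sum_l l!\lambda^{k-l}S(k,l)R^{l+1}(1)\in C$. Hence $R|_C$ is itself an RB-operator of weight $\lambda$, and the problem is now localized in $C$.

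To rule out $m\geq 1$, the core calculation is the RB-identity at $x=y=n_0$. A direct computation in $C$ gives
\[
R(n_0)=R^2(1)-\alpha R(1)=\tfrac12 R(1)(R(1)-2\alpha-\lambda)=\tfrac12\bigl(-\alpha(\alpha+\lambda)-\lambda n_0+n_0^2\bigr),
\]
together with $2n_0R(n_0)+\lambda n_0^2=-\alpha(\alpha+\lambda)\,n_0+n_0^3$, so the identity becomes
\[
R(n_0)^2=-\alpha(\alpha+\lambda)\,R(n_0)+R(n_0^3).
\]
Expanding both sides in the basis of $C$ and matching coefficients is the decisive step. For $m=1$ one has $n_0^3=0$, and constant-term comparison forces $\alpha(\alpha+\lambda)=0$; up to $\phi$ we take $\alpha=0$, so $R(1)=n_0$ is nilpotent, and Lemma~\ref{lem3} yields $R(1)=0$, contradicting $n_0\neq 0$. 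For $m=2$ (so $n_0^2\neq0$, $n_0^3=0$) the coefficient of $n_0^2$ in the same identity reads $\lambda^2/4=0$, contradicting $\lambda\neq 0$.

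The main obstacle is the case $m\geq 3$: the single identity $x=y=n_0$ is no longer enough, and the plan is to escalate to $x=y=n_0^k$ for the smallest $k$ with $2k\geq m+1$, so that $\lambda n_0^{2k}=0$ and the relation collapses to $R(n_0^k)^2=2R(n_0^kR(n_0^k))$. Expressing each $R(n_0^i)$ via $n_0^i=(R(1)-\alpha)^i$ and the Spitzer expansion of $R^l(1)$ turns the coefficient comparison into a polynomial identity in $\alpha$ and $\lambda$. The check case $k=2$, $m=3$ illustrates the mechanism: the RB-relation forces $c_1\in\{0,\lambda^2/2\}$ for the $n_0$-coefficient of $R(n_0^2)$, while direct evaluation gives $c_1=\lambda^2/6$, which is incompatible in characteristic zero. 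The expectation is that an analogous incompatibility rules out every $m\geq 3$, and organizing this combinatorics uniformly is the substantive work of the proof. Once $m\geq 1$ is excluded we have $R(1)\in F$, and the theorem follows from Lemma~\ref{lem1}(a) together with $\phi$ as described at the outset.
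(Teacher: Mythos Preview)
Your localization to $C=F[R(1)]$ is correct and your computations for $m=1,2$ are fine, but the proof is incomplete: for $m\ge 3$ you only offer a heuristic (``the expectation is that an analogous incompatibility rules out every $m\ge 3$'') and an outline of the $m=3$ check. The combinatorics you anticipate is genuine --- escalating to $x=y=n_0^k$ and unwinding the Spitzer expansion produces increasingly intricate polynomial identities in $\alpha,\lambda$, and you have not supplied a uniform argument that they are inconsistent for all $m$. As written, this is a gap, not a proof.

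The paper sidesteps this entirely by a qualitative argument rather than coefficient matching. It first shows $1\notin\Imm(R)$: if $R(x)=1$ with $\lambda=-1$, then $R(x^k)=1$ for all $k$, and writing $x=\alpha\cdot 1+a$ with $a\in N$, successive differences $x-x^2,\ x-x^3,\ldots$ lie in $\ker R$; manipulating these (using only that $a$ is nilpotent) forces $a\in\ker R$, whence $R(1)\in F$ and Lemma~\ref{lem1}(a) finishes. Next, if some $R(x)=1+a$ with $a\in N$, then since $\Imm(R)$ is a subalgebra one extracts $a+a^2\in\Imm(R)$, then $a-2a^3-a^4$, and so on, eventually $a\in\Imm(R)$, hence $1\in\Imm(R)$, a contradiction. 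Thus $\Imm(R)\subseteq N$, so $R(1)$ is nilpotent, and Lemma~\ref{lem3} gives $R(1)=0$. This argument is uniform in the nilpotency index and never touches the Stirling combinatorics you are struggling with. In fact, you could graft this very argument onto your restricted operator $R|_C$ (since $C=F1\oplus n_0C$ with $n_0C$ nilpotent) and immediately conclude $\alpha=0$, after which your own invocation of Lemma~\ref{lem3} finishes the job.
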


\begin{proof}
Let $R$ be an RB-operator of nonzero weight $\lambda$ on~$A$.

Suppose there exists $x\in A$ such that $R(x) = 1$
and let $\lambda = -1$. We have
\begin{equation}\label{PreimageOf1}
1 = R(x)R(x)
 = 2R(R(x)) - R(x^2) = 2 - R(x^2),
\end{equation}
therefore, $R(x^2) = 1$. Analogously, $R(x^k) = 1$ for all $k\in\mathbb{N}$.
Hence, $x = \alpha\cdot1+a$ for some $\alpha\in F^*$.
From $x-x^2\in \ker R$, we have $\alpha(1-\alpha)1+a(1-2\alpha-a)\in\ker R$.
If $\alpha_1 = \alpha(1-\alpha) = 0$, then $\alpha = 1$ and $\ker R$ contains
$a + a^2$, $a + a^2 - (a+a^2)^2 = a -2a^3-a^4$ and so on. Thus, $a\in \ker R$
and $R(x) = \alpha R(1) = 1$. By Lemma~\ref{lem1}(a), we are done.

Suppose that $\alpha_1\neq 0$. Consider
$$
x-x^3 = \alpha(1-\alpha^2)\cdot1 + a(1-3\alpha^2-3\alpha a-a^2)\in \ker R
$$
and denote $\alpha_2 = \alpha(1-\alpha^2)$. By the same reasons as above,
we can consider only the case $\alpha_2\neq0$. From
$$
(x-x^3) - (1+\alpha)(x-x^2)
 = \alpha_1 a + (1-2\alpha)a^2 - a^3 \in \ker R,
$$
we analogously obtain $a\in\ker R$.

Let us prove that $R(A)\subseteq N$.
Suppose there exists $x\in A$ such that $R(x) = 1 + a$ for $a\in N$.
As $\Imm(R)$ is a subalgebra, $1+2a+a^2\in\Imm(R)$
and hence $a+a^2\in\Imm(R)$. It remains to repeat the above arguments
to deduce that $a\in\Imm(R)$. So, $1\in\Imm(R)$, a~contradiction.

Applying Lemma~\ref{lem3}, we obtain that $R$ is splitting.
\end{proof}

\begin{cor}
Let $A$ be a unital power-associative algebra with no idempotents
except $0$ and $1$. Then $-\lambda\id$ is the only
invertible RB-operator of nonzero weight~$\lambda$ on~$A$.
\end{cor}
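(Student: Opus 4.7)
The plan is to reduce to Lemma~\ref{lem1}(a) by showing that an invertible RB-operator must send $1$ into the scalar line $F\cdot 1$; a short idempotent argument applied to the preimage of~$1$ is the bridge, and the no-nontrivial-idempotent hypothesis is what kills all other possibilities.

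Concretely, since $R$ is invertible, I would set $e = R^{-1}(1)$ and apply the RB-identity~\eqref{RB} at $x=y=e$:
\[
1 \;=\; R(e)R(e) \;=\; R\bigl(R(e)e + eR(e) + \lambda e^2\bigr) \;=\; R(2e + \lambda e^2).
\]
Injectivity of~$R$ then gives $2e + \lambda e^2 = e$, i.e.\ $\lambda e^2 = -e$. Setting $f = -\lambda e$, I get $f^2 = \lambda^2 e^2 = -\lambda e = f$, so $f$ is an idempotent of~$A$. It cannot be $0$, since $e = 0$ would force $1 = R(e) = 0$; hence by hypothesis $f = 1$, which gives $e = -\lambda^{-1}\cdot 1$ and therefore $R(1) = -\lambda$.

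At this point $R(1) \in F$ and $\lambda\neq 0$, so Lemma~\ref{lem1}(a) produces a splitting decomposition $A = A_1 \oplus A_2$ with $R(a_1+a_2) = -\lambda a_2$. Invertibility of~$R$ forces $\ker R = 0$, and since $A_1 \subseteq \ker R$ by construction, we conclude $A_1 = 0$, $A_2 = A$, and $R = -\lambda\id$.

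The only delicate step is the idempotent identification; power-associativity enters only to make $e^2$ unambiguous and to legitimize Lemma~\ref{lem1}(a), and notably no characteristic hypothesis on~$F$ is needed for this corollary.
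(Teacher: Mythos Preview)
Your proof is correct and follows essentially the same route as the paper: take the preimage of~$1$, use the RB-identity and injectivity to force it to be (a scalar multiple of) an idempotent, invoke the no-nontrivial-idempotent hypothesis to land in $F\cdot 1$, then finish with Lemma~\ref{lem1}(a) and invertibility. The only cosmetic difference is that the paper first rescales to weight $-1$ via Statement~\ref{sta1}(b), so that the preimage $x$ satisfies $x^2=x$ directly, whereas you keep a general $\lambda$ and pass to $f=-\lambda e$ to obtain the idempotent; the content is identical.
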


\begin{proof}
Suppose that $R$ is invertible RB-operator of weight~$-1$ on~$A$
(we rescale by Statement~\ref{sta1}(b).
For $x\in A$ such that $R(x) = 1$, we have by~(\ref{PreimageOf1})
that $x$ is an idempotent in~$A$. Thus, $x = 1$
and $R$ is splitting by Lemma~\ref{lem1}(a). As $R$ is invertible, $R = \id$.
\end{proof}

Let $\mathrm{Gr}_n$ ($\mathrm{Gr}_\infty$)
denote the Grassmann algebra of a vector space
$V = \Span\{e_1,\ldots,e_n\}$
($V = \Span\{e_i\mid i\in\mathbb{N}_+\}$).

In \cite{BGP}, it was proved that all
RB-operators of nonzero weight on $\mathrm{Gr}_2$ are splitting.
Now we extend this result as follows.

\begin{cor}\label{cor4}
Given an RB-operator $R$ of nonzero weight on $\mathrm{Gr}_\infty$ or
$\mathrm{Gr}_n$, we have $R$~is splitting and $R(1) = 0$ up to $\phi$.
\end{cor}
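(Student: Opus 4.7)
The plan is to reduce this corollary directly to Theorem~\ref{thm9}. The Grassmann algebra $\mathrm{Gr}_n$ (or $\mathrm{Gr}_\infty$) is unital and associative, hence power-associative; writing $\mathrm{Gr} = F1 \oplus N$, where $N$ is the augmentation ideal spanned by all monomials $e_{i_1}\cdots e_{i_k}$ of positive degree, one obtains a decomposition of exactly the form required by Theorem~\ref{thm9}. Since $N$ is closed under multiplication (the product of two positive-degree monomials is either zero or again of positive degree), the only hypothesis left to verify is that $N$ is a nil-algebra.

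To check this, I would decompose an arbitrary $a \in N$ as $a = a_0 + a_1$, with $a_0$ the sum of the even-degree homogeneous components of $a$ and $a_1$ the sum of the odd-degree components. Odd-degree elements anticommute in $\mathrm{Gr}$, so $a_1^2 = -a_1^2$ and hence $a_1^2 = 0$ in characteristic zero. Even-degree elements lie in the center, so $a_0$ and $a_1$ commute and the binomial expansion of $a^M$ is available. It therefore suffices to show that $a_0$ is nilpotent. In $\mathrm{Gr}_n$ this is immediate because each monomial summand of $a_0^k$ has degree at least $2k$, so $a_0^{\lceil n/2\rceil+1}=0$. In $\mathrm{Gr}_\infty$ any single $a \in N$ uses only finitely many generators and thus sits inside a subalgebra isomorphic to some $\mathrm{Gr}_m$, reducing to the previous case. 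Combining the nilpotency of $a_0$ with $a_1^2=0$ and the commutativity of $a_0$ and $a_1$ gives $a^M=0$ for $M$ large enough, so $N$ is a nil-algebra.

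Applying Theorem~\ref{thm9} then yields both conclusions in one stroke: $R$ is splitting, and, up to $\phi$, $R(1)=0$. The real substance of the argument lives inside Theorem~\ref{thm9}; the only nontrivial step specific to the Grassmann case is the nilpotency verification for $N$, and even there the main potential obstacle, handling $\mathrm{Gr}_\infty$, dissolves immediately because every element has finite support among the generators. I expect no hidden difficulty beyond being careful with the even/odd splitting and the characteristic-zero assumption used to conclude $a_1^2=0$.
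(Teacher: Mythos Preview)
Your proposal is correct and matches the paper's approach exactly: the paper states this as an immediate corollary of Theorem~\ref{thm9}, and the only content to supply is precisely the verification that the augmentation ideal $N$ is a nil-algebra, which you carry out carefully. Your even/odd decomposition is more detailed than strictly necessary (in $\mathrm{Gr}_n$ one already has $a^{n+1}=0$ for any $a\in N$ since every monomial in the expansion has degree $\geq n+1$), but the argument is sound.
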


\begin{cor}
Given the polynomial algebra $F[x_1,\ldots,x_n]$,
let $A$ be its quotient by the ideal
$I = \langle x_1^{m_1},\ldots,x_n^{m_n}\rangle$, $m_i\in\mathbb{N}$.
Each RB-operator $R$ of nonzero weight on $A$ is splitting
and $R(1) = 0$ up to $\phi$.
\end{cor}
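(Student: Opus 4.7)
The plan is to obtain this as an immediate consequence of Theorem~\ref{thm9}. I need only verify its three hypotheses---unital, power-associative, and a vector-space decomposition $A = F1\oplus N$ with $N$ a nil-algebra---for the quotient $A = F[x_1,\ldots,x_n]/I$.

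The first two hypotheses come for free: $A$ is a commutative associative algebra with unit $\bar 1 = 1+I$, so in particular power-associative. For the decomposition I would take $N$ to be the image in $A$ of the augmentation ideal $\langle x_1,\ldots,x_n\rangle$ of $F[x_1,\ldots,x_n]$, consisting of classes of polynomials with zero constant term; then $A = F\bar 1\oplus N$ is clear from separating the constant coefficient.

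The only genuine step is to show that $N$ is a nil-algebra, and I would prove the stronger fact that $N$ is nilpotent. Set $M = (m_1-1)+\cdots+(m_n-1)+1$. Any monomial $\bar x_1^{a_1}\cdots\bar x_n^{a_n}$ with total degree $\sum a_i\geq M$ must satisfy $a_i\geq m_i$ for some~$i$ (otherwise $\sum a_i\leq \sum(m_i-1) = M-1$), and hence vanishes in~$A$ by definition of~$I$. Since $N^M$ is spanned by products of $M$ generators $\bar x_i$, which reassemble into such monomials of total degree~$M$, we get $N^M = 0$. In particular every element of $N$ is nilpotent, so $N$ is a nil-algebra, Theorem~\ref{thm9} applies, and the conclusion follows at once. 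There is no real obstacle here; all the substantive content sits in Theorem~\ref{thm9} itself.
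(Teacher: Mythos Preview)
Your proposal is correct and matches the paper's approach: the corollary is stated without proof as an immediate application of Theorem~\ref{thm9}, and your verification of the hypotheses (unital, power-associative, $A = F1\oplus N$ with $N$ nilpotent via the augmentation ideal) is precisely the routine check that is implicit there.
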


\subsection{Matrix algebra}

We need the following bounds on the dimensions
of $\ker(R)$ and $\Imm(R)$ for an RB-operator $R$ on $M_n(F)$.

\begin{lem}\label{lem4}
Let $F$ be either an algebraically closed field or
a field of characteristic zero
and $R$ be an RB-operator of nonzero weight on $M_n(F)$
which is not splitting.

(a) We have $n-1\leq \dim(\ker R)\leq n^2-n$.

(b) If $\dim(\ker R) = n-1$, then $\Imm(R)$ contains the identity matrix
and it is conjugate to the subalgebra $\Span\{e_{ij}\mid j>1$ or $i=j=1\}$.

\end{lem}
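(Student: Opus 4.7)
The key fact throughout is that $\Imm R$ is an associative subalgebra of $M_n(F)$, immediate from the RB-identity, since any product $R(x)R(y)$ lies in the image. I will bound $\dim\ker R$ from above and below by two quite different arguments.

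For the upper bound $\dim\ker R\le n^2-n$, the plan is to show every matrix in $\ker R$ is singular and then appeal to the classical Dieudonn\'e theorem (the Meshulam/Wiegmann references) bounding subspaces of singular matrices in $M_n(F)$ by $n^2-n$. Suppose $u\in\ker R$ were invertible. Substituting $x=u$ in the RB-identity yields $0=R(u)R(y)=R(u(R(y)+\lambda y))$, so $u\cdot\Imm(R+\lambda\id)\subseteq\ker R$; since $u$ is invertible this gives $\dim\Imm(R+\lambda\id)\le\dim\ker R$, which rearranges to $\dim\ker R+\dim\ker(R+\lambda\id)\ge n^2$. As $\ker R$ and $\ker(R+\lambda\id)$ are the $0$- and $(-\lambda)$-eigenspaces of $R$ (with $\lambda\neq 0$), they intersect trivially, so $M_n(F)=\ker R\oplus\ker(R+\lambda\id)$. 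A direct RB-calculation shows both summands are subalgebras, so $R$ is the associated splitting RB-operator, contradicting the hypothesis.

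For the lower bound $\dim\ker R\ge n-1$, the plan is to show $\Imm R$ is a \emph{proper} subalgebra and invoke the classification of maximal subalgebras of $M_n(F)$ (\cite{Agore} under the given hypotheses on $F$): any proper subalgebra has dimension at most $n^2-n+1$. Suppose $R$ is surjective, hence bijective. Then $R(x)=1$ for some $x$; substituting $x=y$ in the RB-identity and using injectivity yields $x+\lambda x^2=0$, so $e:=-\lambda x$ is an idempotent with $R(e)=-\lambda\cdot 1$. The cases $e=0$ (giving $R(0)\neq 0$) and $e=1$ (giving $R(1)\in F$, so splitting by Lemma~\ref{lem1}(a)) both contradict directly. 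Otherwise, after a conjugating automorphism, $e=\diag(I_k,0)$ with $0<k<n$; setting $x=e$ in the RB-identity in both orders then forces $e(R(y)+\lambda y)=(R(y)+\lambda y)e=0$ for every $y$. In the block decomposition induced by $e$ this precisely pins $R$ to equal $-\lambda\id$ on every block except the bottom-right corner, where $R$ restricts to an RB-operator of weight $\lambda$ on $M_{n-k}(F)$ which is again bijective. Induction on $n$ (base $n=1$ immediate) gives $R=-\lambda\id$ globally, which is splitting, a contradiction.

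For part (b), when $\dim\ker R=n-1$ we have $\dim\Imm R=n^2-n+1$, the maximum dimension of a proper subalgebra of $M_n(F)$. The classification forces $\Imm R$ to be conjugate to the stabilizer of a line $\Span\{e_{ij}\mid j>1\text{ or }i=j=1\}$; this subalgebra visibly contains $1=\sum_i e_{ii}$, so $1\in\Imm R$. The main technical obstacle is the surjective case in the lower bound: the block analysis around the idempotent $e$ and the clean formulation of the induction on $n$ are delicate, whereas the upper bound is a tidy application of Dieudonn\'e once non-invertibility of kernel elements is established.
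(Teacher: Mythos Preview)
Your proof is correct, but the upper bound argument takes a genuinely different route from the paper's. The paper observes simply that $\ker R$ is a subalgebra (since $\lambda\neq 0$) not containing~$1$ (else $R(1)=0\in F$ and Lemma~\ref{lem1}(a) forces $R$ splitting), so $\ker R\oplus F1$ is a proper unital subalgebra and Agore's bound $n^2-n+1$ applies to it, giving $\dim\ker R\le n^2-n$. You instead establish the stronger fact that \emph{every} element of $\ker R$ is singular and then invoke the Dieudonn\'e--Meshulam bound on subspaces of singular matrices. Your argument is more work but yields a sharper intermediate statement; the paper's is a two-line application of the same Agore result already needed for the lower bound.

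For the lower bound and part~(b), you and the paper agree: both apply Agore's bound and classification to the proper subalgebra $\Imm R$. The paper leaves ``$\Imm R$ is proper'' implicit (it follows from Lemma~\ref{lem7}(b), proved later: an invertible RB-operator of nonzero weight on a simple algebra is $-\lambda\id$, hence splitting). Your idempotent-and-block-induction argument is essentially an inline proof of Lemma~\ref{lem7}(b) specialised to $M_n(F)$; it is correct, and arguably cleaner than pointing forward, though the paper's abstract version (using that $R$ and $R+\lambda\id$ are homomorphisms from the twisted algebra $A'$ to $A$) is shorter and works for any simple unital algebra.

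One small remark: your parenthetical citation ``Meshulam/Wiegmann'' is off --- Wiegmann's paper concerns quaternionic matrices; the relevant reference here is~\cite{Meshulam} (or Flanders).
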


\begin{proof}
In \cite{Agore}, the bound $\dim A\leq n^2-n+1$
was proved for a proper maximal subalgebra~$A$ of $M_n(F)$ over
a field $F$ of characteristic zero. This bound can be proved
when $F$ is an algebraically closed field of any characteristic
since the required Wedderburn---Maltsev decomposition
over such fields exists~\cite[p. 143]{Behrens}.

(a) As $R$ is not splitting, $1\not\in\ker R$ by Lemma~\ref{lem1}(a).
Hence, $\dim\ker R\leq n^2-n$. The bound $n-1\leq \dim(\ker R)$
follows from $\dim(\Imm R)\leq n^2-n+1$.

(b) Let $\dim(\ker R) = n-1$.
Then $\dim(\Imm R) = n^2-n+1$ and we are done by~\cite{Agore}.
\end{proof}

Let $D_n$ denote the subalgebra of all diagonal matrices in $M_n(F)$
and $L_n$ ($U_n$) the set of all strictly lower (upper) triangular
matrices in $M_n(F)$.

\begin{exm}[\cite{BGP}]\label{exm11}
Decomposing $M_n(F) = L_n\oplus D_n\oplus U_n$ (as vector spaces),
we have a~triangular-splitting RB-operator
defined with $A_- = L_n$, $A_+ = U_n$, $A_0 = D_n$
or $A_- = U_n$, $A_+ = L_n$, $A_0 = D_n$.
All RB-operators of nonzero weight on $D_n$ were described in Proposition~\ref{pro1}.
\end{exm}

In some sense, all RB-operators of nonzero weight on $M_2(F)$ when $\char F\neq2$
were described in~\cite{BGP}. Let us refine thus result as follows.

\begin{thm}\label{RBOnM2nonzero}
Let $F$ be an algebraically closed field of characteristic zero.
Every nontrivial RB-operator of weight~1 on $M_2(F)$
(up to conjugation with an automorphism of $M_2(F)$, up to $\phi$ and transpose)
equals to one of the following cases:

(M1) $R\begin{pmatrix}
x_{11} & x_{12} \\
x_{21} & x_{22} \\
\end{pmatrix}
 = \begin{pmatrix}
0 & - x_{12} \\
0 & x_{11}
\end{pmatrix}$,

(M2) $R\begin{pmatrix}
x_{11} & x_{12} \\
x_{21} & x_{22} \\
\end{pmatrix}
 = \begin{pmatrix}
-x_{11} & - x_{12} \\
0 & 0
\end{pmatrix}$,

(M3) $R\begin{pmatrix}
x_{11} & x_{12} \\
x_{21} & x_{22} \\
\end{pmatrix}
 = -x_{21}\begin{pmatrix}
\alpha & \alpha\gamma \\
1 & \gamma
\end{pmatrix}$,\quad $\alpha,\gamma \in F$,

(M4) $R\begin{pmatrix}
x_{11} & x_{12} \\
x_{21} & x_{22} \\
\end{pmatrix}
 = \begin{pmatrix}
\alpha x_{12} & - x_{12}\\
-x_{21} & (1/\alpha)x_{21}
\end{pmatrix}$,\quad $\alpha\in F\setminus\{0\}$,

(M5) $R\begin{pmatrix}
x_{11} & x_{12} \\
x_{21} & x_{22} \\
\end{pmatrix}
 = \begin{pmatrix}
x_{22}-x_{11} + \alpha x_{21} & (-\alpha^2/4) x_{21} - x_{12}\\
0 & 0
\end{pmatrix}$,\quad $\alpha\in F$.
\end{thm}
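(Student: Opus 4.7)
My plan is to first reduce $R(1)$ to one of a small list of normal forms using Theorem~\ref{thm11} (Theorem~B), and then analyze each normal form separately by combining the RB identity with the classification of low-dimensional subalgebras of $M_2(F)$.

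By Theorem~\ref{thm11} together with Statement~\ref{sta2}, after replacing $R$ by a suitable conjugate $R^{(\psi)}$ with $\psi\in\Aut(M_2(F))$ and possibly by $\phi(R)$, I may assume $R(1)$ is diagonal with diagonal entries forming a consecutive set of integers containing $0$. Since $M_2(F)$ admits only two diagonal entries, a further conjugation by the permutation matrix reduces us to exactly three cases: $R(1)\in\{0,\,e_{22},\,-e_{11}\}$.

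If $R(1)=0$, substituting $y=1$ in~\eqref{RB} yields $R^{2}=-R$, and Lemma~\ref{lem1}(a) gives that $R$ is a splitting RB-operator: $M_{2}(F)=A_{1}\oplus A_{2}$ with $A_{1},A_{2}$ subalgebras, $1\in A_{1}=\ker R$, $A_{2}=\Imm R$, and $R(a_{1}+a_{2})=-a_{2}$. The task reduces to classifying such decompositions up to $\Aut(M_{2}(F))$ and transpose. By Lemma~\ref{lem4}(a), $\dim A_{1}\in\{2,3\}$. If $\dim A_{1}=3$, the bound on proper subalgebras from~\cite{Agore} forces $A_{1}$ to be conjugate to the upper-triangular subalgebra; the 1-dimensional complementary subalgebra $A_{2}=Fv$ (with $v^{2}\in Fv$), after normalizing the $(2,1)$-entry of $v$ to $1$, is parametrized by two scalars $\alpha,\gamma\in F$ and yields~(M3). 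If $\dim A_{1}=2$, then over the algebraically closed field $F$ the unital subalgebra $A_{1}$ is conjugate either to $D_{2}\cong F\oplus F$ or to $F\cdot 1\oplus Fe_{12}\cong F[x]/(x^{2})$; parametrizing the complementary $2$-dimensional subalgebra $A_{2}$ in each subcase and normalizing by diagonal automorphisms produces~(M4) in the semisimple subcase and~(M5) in the local subcase.

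If $R(1)=-e_{11}$, the substitution $x=y=1$ in~\eqref{RB} together with $R(e_{11})+R(e_{22})=R(1)=-e_{11}$ yields $R(e_{11})=-e_{11}$ and $R(e_{22})=0$; further substitutions $y=1$ with $x=e_{12},e_{21}$ pin $R$ down on the off-diagonal entries, showing that $R$ is the splitting RB-operator with $A_{1}=\Span\{e_{21},e_{22}\}$ and $A_{2}=\Span\{e_{11},e_{12}\}$, i.e.,~(M2). If $R(1)=e_{22}$, analogous substitutions force $R$ to be a triangular-splitting operator with $A_{-}=Fe_{21}$, $A_{0}=D_{2}$, $A_{+}=Fe_{12}$, whose diagonal part $R_{0}$ is one of the RB-operators on $D_{2}$ listed in Proposition~\ref{pro1}; after further normalization by diagonal automorphisms, the operator takes the form~(M1).

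The main obstacle is the $R(1)=0$ case: verifying that every direct-sum decomposition of $M_{2}(F)$ into two subalgebras with $1\in A_{1}$ falls, up to $\Aut(M_{2}(F))$ and transpose, into exactly one of the three families (M3), (M4), (M5), without collapsing distinct families. The key ingredients are the Wedderburn---Maltsev decomposition, the maximal-subalgebra bound of~\cite{Agore}, and the standard classification of $2$-dimensional unital commutative $F$-algebras as either $F\oplus F$ or $F[x]/(x^{2})$.
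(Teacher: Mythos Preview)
Your approach is valid but differs from the paper's. The paper invokes a prior result from~\cite{BGP}: every nontrivial RB-operator of weight~$1$ on $M_2(F)$ is either triangular-splitting via Example~\ref{exm11} or splitting with one unital subalgebra. This dichotomy immediately isolates (M1) and (M2) in the first branch, and the paper then classifies the splitting decompositions in the second. You instead normalise $R(1)$ via Theorem~\ref{thm11}, which is legitimate since that theorem does not depend on the present one, and this makes the argument self-contained at the cost of the extra direct computations in the cases $R(1)=e_{22}$ and $R(1)=-e_{11}$. Those computations are correct in outline but need one more step than you indicate: after showing $R(e_{12})=be_{12}$, $R(e_{21})=ce_{21}$ with $b,c\in\{0,-1\}$, you must still verify that only two of the four combinations satisfy the full identity~\eqref{RB} (they are related by transpose).

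Two gaps in the splitting case $R(1)=0$. First, Lemma~\ref{lem4}(a) is stated only for \emph{non-splitting} operators (and for $n=2$ gives $1\le\dim\ker R\le 2$, not your claimed range), so it cannot be cited here. You must argue directly that $\dim A_1=1$ is impossible, i.e.\ that $M_2(F)$ has no $3$-dimensional subalgebra avoiding~$1$; this follows from Wedderburn--Malcev, since any such subalgebra would have semisimple part $F\oplus F$, whose two orthogonal idempotents already sum to~$1$. Second, in the local subcase $A_1\cong F[x]/(x^2)$, normalising $A_1$ to $\Span\{1,e_{12}\}$ and then parametrising $A_2$ does not literally produce the family~(M5) as written. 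The paper instead normalises the non-unital subalgebra $A_2$ to $\Span\{e_{11},e_{12}\}$ (via~\cite{Meshulam}, since $A_2$ consists of matrices of rank~$\le 1$) and parametrises $A_1=\Span\{1,w\}$. Your route works but yields a different-looking one-parameter family, which you would then have to match to~(M5) by a further, non-diagonal conjugation.
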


\begin{proof}
Let $R$ be an RB-operator of weight~1 on $M_2(F)$.
In~\cite{BGP}, it was stated that
either $R$~is defined by Example~\ref{exm11}
or $R$~is splitting with one subalgebra being unital.
It is easy to check all non-splitting RB-operators
defined by Example~\ref{exm11} are exactly (M1) and (M2).

Now, consider the case when $R$ is splitting, i.e.,
$M_2(F) = \ker (R) \oplus \ker (R+\id)$ (as vector spaces).
We may assume that $\dim (\ker (R))\geq \dim (\ker (R+\id))$.

{\sc Case I}, $\dim(\ker (R)) = 3$.
By Lemma~\ref{lem4}(b), $\ker (R)$ is unital and conjugate to the subalgebra of upper-triangular matrices.
So, $\ker(R+\id) = Fv$ for
$v = \begin{pmatrix}
\alpha & \beta \\
1 & \gamma
\end{pmatrix}$.
Since $\ker (R+\id)$ is a~subalgebra in $M_2(F)$,
we have the condition $\beta = \alpha\gamma$ and it is (M3).

{\sc Case II}, $\dim(\ker (R)) = 2$.
Suppose that $\ker (R)$ is unital subalgebra.
So, $\ker(R+\id)$ consists of matrices of rank not greater than~1. By~\cite{Meshulam},
we may assume that $\ker(R+\id) = \Span\{e_{11},e_{12}\}$.
Thus, $\ker(R)$ consists of linear combinations of~1 and
$w = \begin{pmatrix}
\alpha & \beta \\
1 & 0 \end{pmatrix}$.
We have either $\ker(R)\cong F\oplus F$ or $\ker(R)$ has a one-dimensional radical.
Since $w^2 = \beta\cdot1 + \alpha w$, we have the second variant if and only if
$\beta = -\alpha^2/4$, it is (M5).

In the case $\ker(R)\cong F\oplus F$, we may suppose that $\ker(R)$ consists of diagonal matrices.
Denote $R(e_{12}) = (\delta_{ij})$ and $R(e_{21}) = (\gamma_{ij})$.
Since $\Imm(R') = \ker(R)$, we get $\delta_{12} = \gamma_{21} = -1$
and $\delta_{21} = \gamma_{12} = 0$.
Since $\ker(R') = \Imm(R)$ is a subalgebra of $M_2(F)$
not containing unit, the determinants of $R(e_{12})$ and $R(e_{21})$ are zero.
So, $\delta_{11}\delta_{22} = \gamma_{11}\gamma_{22} = 0$.

From
$$
R(e_{12})R(e_{21}) = \delta_{22}R(e_{21}) + \gamma_{22}R(e_{12}),
$$
we receive the relations
$$
\delta_{22}\gamma_{22} = 0,\quad
\delta_{11}\gamma_{22} + \delta_{22}\gamma_{11} = \delta_{11}\gamma_{11}+1.
$$
Considering
$$
R(e_{21})(e_{12}) = \delta_{11}R(e_{21}) + \gamma_{11}R(e_{12}),
$$
we get
$$
\delta_{11}\gamma_{11} = 0, \quad
\delta_{11}\gamma_{22} + \delta_{22}\gamma_{11} = \delta_{22}\gamma_{22}+1.
$$
From all these equations, we see that exactly two of
$\delta_{11},\delta_{22},\gamma_{11},\gamma_{22}$ are zero.
Up to transpose, we may assume that $\delta_{22} = \gamma_{11} = 0$.
Finally, $\delta_{11}\gamma_{22} = 1$ and it is (M4).
\end{proof}

\begin{rem}
In general case $M_n(F)$, not all of RB-operators of weight~1
are splitting or are defined by Example~\ref{exm11}.
For example, a linear map
$R\colon M_3(F)\to M_3(F)$ defined as follows:
$R(e_{13}) = -e_{13}$, $R(e_{23}) = -e_{23}$,
$R(e_{33}) = e_{22}$, $R(e_{kl}) = 0$ for all other matrix unities
$e_{kl}$, is such an RB-operator of weight~1.
However, $R$ is triangular-splitting with subalgebras
$A_- = \Span\{e_{31},e_{32}\}$,
$A_+ = \Span\{e_{13},e_{23}\}$,
$A_0 = \Span\{e_{11},e_{12},e_{21},e_{22},e_{33}\}$.
\end{rem}

We need some preliminary results before proving the main theorem of the current section.

Let $F_n(m) = \sum\limits_{j=1}^{m}j^n$ for natural $n,m$.
It is well known that $F_1(m) = m(m+1)/2$,
$F_2(m) = m(m+1)(2m+1)/6$, $F_3(m) = (F_1(m))^2$.
For any $n$,
\begin{equation}\label{sum-formula}
F_n(m) = \frac{1}{n+1}\sum\limits_{j=0}^n(-1)^j
 \binom{n+1}{j} B_j m^{n+1-j},
\end{equation}
where $B_0=1,B_1,\ldots,B_n$ are Bernoulli numbers.

\begin{lem}[\cite{Miller66,Ogievetsky}]\label{lem5}
Let $A$ be a unital power-associative algebra, $R$ be an RB-operator
of weight $-1$ on~$A$, $a = R(1)$.
Then $R(a^n) = F_n(a)$ for all $n\in\mathbb{N}$.
\end{lem}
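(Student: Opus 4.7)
The plan is to prove the identity by induction on $n$, using the Rota---Baxter identity with $x=1$ to trade multiplication by $a$ against raising the power of $a$ inside $R$. Power-associativity ensures that every $a^n$ is unambiguous and that the subalgebra generated by $1$ and $a$ is commutative associative, so polynomial expressions in $a$ cause no ambiguity.

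For the base case $n=1$, substituting $x=y=1$ into~\eqref{RB} with $\lambda=-1$ yields $a^2=2R(a)-a$, hence $R(a)=\tfrac12(a^2+a)=F_1(a)$. For the inductive step, assuming $R(a^k)=F_k(a)$ for every $k\le n$, I substitute $x=1$, $y=a^n$ into~\eqref{RB} to obtain $a\,R(a^n)=R(a^{n+1})+R(R(a^n))-R(a^n)$, which rearranges to
\[
R(a^{n+1}) = (a+1)\,R(a^n) - R(R(a^n)).
\]
Writing $F_n(t)=\sum_{k=1}^{n+1}c_{n,k}t^k$ with leading coefficient $c_{n,n+1}=1/(n+1)$ (from the growth rate $\sum_{j=1}^m j^n\sim m^{n+1}/(n+1)$), linearity of $R$ combined with the inductive hypothesis applied to $R(a^k)$ for $k\le n$ gives
\[
R(R(a^n)) = R(F_n(a)) = \frac{1}{n+1}\,R(a^{n+1}) + \sum_{k=1}^{n} c_{n,k}\,F_k(a),
\]
and solving for the unknown $R(a^{n+1})$ yields
\[
R(a^{n+1}) = \frac{n+1}{n+2}\left[(a+1)\,F_n(a) - \sum_{k=1}^{n} c_{n,k}\,F_k(a)\right].
\]

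It then remains to identify this closed form with $F_{n+1}(a)$. Since both sides are polynomials in the single element $a$, it suffices to verify the identity of univariate polynomials with rational coefficients, and for this I will evaluate at an arbitrary $a=m\in\mathbb{N}$: both sides unfold into explicit finite sums of powers of integers, and the identity collapses to the elementary interchange $\sum_{j=1}^{m}F_n(j)=\sum_{i=1}^{m}(m-i+1)\,i^n=(m+1)F_n(m)-F_{n+1}(m)$. The main obstacle is not conceptual but bookkeeping: one must track the leading coefficient $c_{n,n+1}=1/(n+1)$ of the Faulhaber polynomial carefully, since this coefficient is precisely what produces $(n+2)/(n+1)$ in front of the unknown $R(a^{n+1})$ and makes it invertible (the argument thus tacitly requires that $n+2$ be invertible in $F$, which is free in characteristic zero).
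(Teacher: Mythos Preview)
Your proof is correct and takes a genuinely different route from the paper's.

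The paper does not use induction on the Rota---Baxter identity directly. Instead it invokes the two Stirling-number formulas~(\ref{Stirling1}) and~(\ref{Stirling2}), which express $R^k(1)$ in terms of powers $(R(1))^j$ and conversely, to rewrite $R(a^n)$ as a double sum over Stirling numbers of both kinds; the computation is then closed by quoting the identity (6.99) from Graham--Knuth--Patashnik relating $\sum_k \frac{(-1)^{k+1-j}}{k+1}S(n,k)s(k+1,j)$ to a single Bernoulli-number term, which matches the Faulhaber expansion~(\ref{sum-formula}) on the nose. Your argument avoids all of this machinery: you stay inside the RB-identity, isolate $R(a^{n+1})$ with a coefficient $(n+2)/(n+1)$, and reduce the remaining verification to the elementary double-counting $\sum_{j=1}^m F_n(j)=(m+1)F_n(m)-F_{n+1}(m)$. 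That is considerably more self-contained and makes transparent exactly where one divides by $n+2$ (hence why characteristic zero is needed). The paper's approach, by contrast, buys a closed-form expression for $R(a^n)$ in one line once the Stirling identities are in hand, and ties the result to the standard combinatorial literature.

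Two minor remarks: you should record the $n=0$ case $R(1)=a=F_0(a)$ explicitly, since the paper's $\mathbb{N}$ includes~$0$; and your appeal to ``evaluate at $a=m\in\mathbb{N}$'' is really the statement that two polynomials in~$\mathbb{Q}[t]$ agreeing at infinitely many integers are equal, which is fine but worth saying in those words.
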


\begin{proof}
For $n = 0$, we have $R(a^0) = R(1) = a$ by the definition. Suppose that $n>0$.
With the help of~(\ref{Stirling1}) and (\ref{Stirling2}), we calculate
\begin{multline}\label{GKP-proof}\allowdisplaybreaks
R(a^n)
 = \sum\limits_{k=1}^n k!(-1)^{n-k}S(n,k)R^{k+1}(1) \\
 = \sum\limits_{k=1}^n \frac{k!}{(k+1)!}(-1)^{n-k}S(n,k)
   \sum\limits_{t=1}^{k+1}s(k+1,t)a^t \\
 = \sum\limits_{j=1}^{n+1} (-1)^{n+1-j}a^j\left(
 \sum\limits_{k\geq1}\frac{(-1)^{k+1-j}}{k+1}S(n,k)s(k+1,j)\right).
\end{multline}

Applying the equality (6.99) from \cite{GKP}
$$
\sum\limits_{k\geq1}\frac{(-1)^{k+1-j}}{k+1}S(n,k)s(k+1,j)
 = \frac{1}{n+1}\binom{n+1}{j} B_{n+1-j},
$$
the formulas~(\ref{sum-formula}) and~(\ref{GKP-proof}) coincide.
\end{proof}

Let us formulate the useful result about the generalized Vandermonde determinant.

\begin{lem}[{\cite[Thm.~20]{Krattenhaler}}]\label{lem6}
Let $t$ be a positive integer and let $A_t(X)$ denote the $t\times m$ matrix
$$
\!\begin{pmatrix}
1 & 0 & 0 & \dots & 0\\
X & 1 & 0 & \dots & 0\\
X^2 & 2X & 2 & \dots & 0\\
X^3 & 3X^2 & 6X & \dots & 0\\
\hdotsfor5\\
X^{t-1} & (t-1)X^{t-2} & (t-1)(t-2)X^{t-3} & \dots  & (t-1)\cdots(t-m+1)X^{t-m}
\end{pmatrix}\!
$$
where the first column consists of powers of $X$ and then every consequent column is formed by derivation of the previous one with respect to~$X$.
Given a partition $t = m_1 + \ldots + m_k$, there holds
\begin{equation} \label{eq:FlHa1}
\det(A_{m_1}(X_1)\,A_{m_2}(X_2)\dots A_{m_k}(X_k))
 = \left(\prod _{i=1}^{k}\prod _{j=1}^{m_i-1}j!\right)
\prod _{1\le i<j\le k}(X_j-X_i)^{m_i m_j}.
\end{equation}
\end{lem}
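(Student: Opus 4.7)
The plan is to derive the identity by degenerating the classical Vandermonde determinant through a controlled confluence of variables. First, I identify each $A_{m_i}(X_i)$ as the matrix whose $j$-th column is $v^{(j-1)}(X_i)$, where $v(X)=(1,X,X^2,\ldots,X^{t-1})^T$; this is immediate from the displayed form of $A_t(X)$, since the $n$-th entry of the $j$-th column is $\binom{n}{j-1}(j-1)!\,X^{n-j+1}=\frac{d^{j-1}}{dX^{j-1}}X^n$. The block matrix $M=(A_{m_1}(X_1)\cdots A_{m_k}(X_k))$ is therefore the confluent Vandermonde in which $m_i$ of the variables of an ordinary Vandermonde have collided at $X_i$.

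I introduce an auxiliary parameter $\epsilon$ and the specialization $Y_{m_1+\ldots+m_{i-1}+j}=X_i+(j-1)\epsilon$ for $1\le j\le m_i$, $1\le i\le k$. Let $V_\epsilon$ be the ordinary Vandermonde matrix with columns $v(Y_1),\ldots,v(Y_t)$; then $\det V_\epsilon=\prod_{1\le s<r\le t}(Y_r-Y_s)$. Within each block $i$, I then perform column operations: replace the $j$-th column of block $i$ by $\epsilon^{-(j-1)}\Delta^{j-1}v(X_i)$, where $\Delta$ is the forward difference with step $\epsilon$. Each such operation is triangular (as a linear combination of the first $j$ columns of block $i$) with diagonal coefficient $\epsilon^{-(j-1)}$, so the determinant gets multiplied by $\prod_{i}\prod_{j=1}^{m_i}\epsilon^{-(j-1)}=\epsilon^{-\sum_i\binom{m_i}{2}}$. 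Denote the resulting matrix by $M_\epsilon$. Since $\epsilon^{-(j-1)}\Delta^{j-1}v(X_i)\to v^{(j-1)}(X_i)$ as $\epsilon\to 0$, we have $M_\epsilon\to M$ entrywise, so $\det M_\epsilon\to\det M$.

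To evaluate the other side, I split $\prod_{s<r}(Y_r-Y_s)$ into intra-block and inter-block factors. Inside block $i$, successive $Y$'s differ by $\epsilon$, so the intra-block contribution equals $\prod_{1\le b<a\le m_i}(a-b)\epsilon=\epsilon^{\binom{m_i}{2}}\prod_{j=1}^{m_i-1}j!$, using the combinatorial identity $\prod_{1\le b<a\le m_i}(a-b)=\prod_{d=1}^{m_i-1}d^{m_i-d}=\prod_{j=1}^{m_i-1}j!$. Between blocks $i<j$, one obtains $\prod_{l,m}(X_j-X_i+(m-l)\epsilon)\to(X_j-X_i)^{m_im_j}$. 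Multiplying the right-hand side by the normalization $\epsilon^{-\sum_i\binom{m_i}{2}}$ cancels the $\epsilon$-powers exactly, and passing to $\epsilon\to 0$ yields the stated formula.

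The main obstacle is the precise bookkeeping of $\epsilon$-exponents: the power of $\epsilon$ produced by the column operations has to cancel exactly with the power arising from the intra-block Vandermonde factors. Both equal $\sum_i\binom{m_i}{2}$, and the matching of these two counts is what makes the degeneration nontrivial and yields the factorial constant. The remaining inputs (convergence of the normalized forward difference to the derivative, and the factorial identity above) are standard.
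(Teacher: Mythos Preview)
The paper does not prove this lemma at all; it is quoted verbatim as a known result from Krattenthaler's survey and used as a black box in the proof of Theorem~\ref{thm11}. There is therefore nothing in the paper to compare your argument against.

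Your confluence argument is correct and is in fact the classical way to obtain the confluent Vandermonde formula. The key bookkeeping point you flag --- that the column renormalization introduces a factor $\epsilon^{-\sum_i\binom{m_i}{2}}$ which exactly cancels the $\epsilon^{\sum_i\binom{m_i}{2}}$ coming from the intra-block Vandermonde factors --- is handled correctly, and the remaining limit of the inter-block factors to $\prod_{i<j}(X_j-X_i)^{m_im_j}$ is immediate. One small remark: your limit argument, as written, lives over~$\mathbb{R}$ or~$\mathbb{C}$, whereas the paper applies the lemma over an arbitrary field of characteristic zero. This is harmless, since both sides of~\eqref{eq:FlHa1} are polynomials in $X_1,\ldots,X_k$ with integer coefficients, so the identity over~$\mathbb{Q}$ implies it over any commutative ring; alternatively, one can observe that the entries of your $M_\epsilon$ are polynomials in~$\epsilon$ (scaled forward differences of polynomials), so $\det M_\epsilon$ is a polynomial in~$\epsilon$ and one may simply evaluate at $\epsilon=0$ rather than take a limit.
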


\begin{lem}\label{lem7}
Given a unital algebra~$A$ and an RB-operator $R$
of nonzero weight~$\lambda$~on~$A$,

(a) at least one of $\ker (R),\ker(R+\lambda\id)$ is nonzero,

(b) if $A$ is simple and $R$ is invertible, then $R = -\lambda\id$.
\end{lem}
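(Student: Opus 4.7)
Part (b) is a direct consequence of part (a) combined with a standard "eigenspace is an ideal" argument. I would proceed as follows. Since $R$ is invertible, $\ker R = 0$, so by part (a) the space $K := \ker(R+\lambda\id)$ is nonzero; pick $x \in K$ with $x \neq 0$, i.e.\ $R(x) = -\lambda x$. Substituting this particular $x$ into the RB-identity with an arbitrary $y \in A$ produces
\[
-\lambda x R(y) = R(x) R(y) = R(R(x)y + x R(y) + \lambda xy) = R(-\lambda xy + x R(y) + \lambda xy) = R(x R(y)),
\]
so $x R(y) \in K$, and symmetrically $R(y)\, x \in K$. Because $R$ is invertible (hence surjective), $\Imm R = A$, so $xA \subseteq K$ and $Ax \subseteq K$; iterating, $K$ is a two-sided ideal. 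By simplicity $K = A$, which is exactly $R = -\lambda\id$.

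For part (a), the starting point is the substitution $x = y = 1$ into the RB-identity, which yields $R(1)^2 = 2R(R(1)) + \lambda R(1)$. Writing $a = R(1)$, this rearranges to $R(a) = \tfrac12 a(a - \lambda\cdot 1)$, and analogously $(R+\lambda\id)(a + \lambda\cdot 1) = \tfrac12 a(a+\lambda\cdot 1)$. The two easy cases are immediate: if $a = 0$ then $1 \in \ker R$, and if $a = -\lambda\cdot 1$ then $1 \in \ker(R+\lambda\id)$. Otherwise I would exploit the auxiliary identity that comes from subtracting the substitutions $x = 1$ and $y = 1$, namely $R \circ \mathrm{ad}_a = \mathrm{ad}_a \circ R$, so that $\ker R$, $\ker(R+\lambda\id)$ and all iterated images $R^n(1)$ are $\mathrm{ad}_a$-invariant. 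Iterating the substitution then yields the falling-factorial formula $R^n(1) = a(a-\lambda)(a-2\lambda)\cdots(a-(n-1)\lambda)/n!$ (cf.\ the combinatorics in Lemma~\ref{lem3}), and the analogous formula for $\phi(R)$ expresses $(\phi R)^n(1)$ as a product $\prod_{j=1}^n(a+j\lambda)$ up to sign and $n!$. Assuming for contradiction that both $R$ and $R+\lambda\id$ are injective, none of these iterated expressions can vanish; combined with the structural identity on $a$ this forces $a$ to lie in a specific part of the algebra (a scalar multiple of $1$, in the typical cases), and then the already-solved scalar case $\mu^2 = -\lambda\mu$ gives $\mu \in \{0,-\lambda\}$, contradicting injectivity.

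The main obstacle is clearly part (a): part (b) is essentially one computation once (a) is available, but (a) is stated without finite-dimensionality, associativity, or simplicity, so one cannot simply appeal to eigenvalues or to Cayley--Hamilton. The delicate point is producing, purely from the RB-identity and unitality, an element lying in one of the two kernels; I expect the proof to proceed by the combinatorial substitution route sketched above, reducing everything to the scalar relation $R(1)\bigl(R(1)+\lambda\cdot 1\bigr) = 0$ after enough iterations.
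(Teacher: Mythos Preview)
Your argument for part~(b) is correct and in spirit matches the paper's. The paper packages it slightly differently: it introduces the auxiliary product $[x,y] = R(x)y + xR(y) + \lambda xy$ on~$A$ (call this algebra~$A'$) and observes that both $R$ and $R+\lambda\id$ are algebra homomorphisms $A'\to A$. When $R$ is invertible it is an isomorphism $A'\cong A$, so $\ker(R+\lambda\id)$ is an ideal in $A'\cong A$; simplicity finishes. Your direct computation that $x\,R(y),\,R(y)\,x\in K$ and surjectivity of $R$ accomplish exactly the same thing.

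Part~(a), however, has a genuine gap. From injectivity of $R$ you correctly get $R^n(1)\neq 0$ for all~$n$, and likewise for $R+\lambda\id$. But the step ``this forces $a$ to lie in a specific part of the algebra (a scalar multiple of~$1$, in the typical cases)'' is where the argument fails: in a general unital algebra---no algebraicity, no finite dimension, no power-associativity assumed---the nonvanishing of all the products $a(a-\lambda)\cdots(a-(n-1)\lambda)$ imposes no constraint whatsoever on~$a$. Think of $a$ as a free generator, or $a=x$ in $F[x]$: every such product is nonzero and $a$ is certainly not a scalar. The combinatorics you invoke is the engine behind Lemma~\ref{lem3}, but there one already \emph{assumes} $R(1)$ nilpotent; here you have nothing to feed in.

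The paper's proof of~(a) bypasses all of this with a one-line trick using the homomorphism observation above. If both $R$ and $R+\lambda\id$ were bijective, then $\psi := (R+\lambda\id)\circ R^{-1}$ would be an automorphism of~$A$, hence $\psi(1)=1$. But writing $u=R^{-1}(1)$ one computes $\psi(1) = (R+\lambda\id)(u) = R(u)+\lambda u = 1+\lambda u$, so $\lambda u = 0$ and thus $R^{-1}(1)=0$, contradicting bijectivity. The entire content of~(a) is recognising that $R$ and $R+\lambda\id$ are homomorphisms from the \emph{same} source algebra~$A'$, so their composite $\psi$ is an automorphism fixing the unit.
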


\begin{proof}
Fix $\lambda = 1$.
Denote the space $A$ under the product
$[x,y] = R(x)y + xR(y) + xy$ as~$A'$.
It is easy to verify that $R,R+\id$ are
homomorphisms from $A'$ to $A$.

(a) Suppose that $\ker (R) = \ker(R+\id) = (0)$, so, $R$ and $R+\id$
are invertible and we may consider $\psi = (R+\id)R^{-1}\in\Aut(A)$.
Thus, $0 = \psi(1) - 1 = (R+\id)R^{-1}(1) - 1 = R^{-1}(1)$,
a contradiction.

(b) If $R$ is invertible, then $R$ is an isomorphism from $A'$ to $A$.
Applying (a), we have that $\ker(R+\id)\neq(0)$.
As $\ker(R+\id)$ is a nonzero ideal in $A'\cong A$ and $A$ is simple,
thus, $A' = \ker(R+\id)$ or $R = -\id$ on the entire space $A$.
\end{proof}

Now we are ready to prove the main result about
RB-operators of nonzero weight on the matrix algebra.

\begin{thm}\label{thm11}
Given an algebraically closed field $F$ of characteristic zero
and an RB-operator $R$ of nonzero weight~$\lambda$ on $M_n(F)$,
there exists a $\psi\in\Aut(M_n(F))$ such that
the matrix $R^{(\psi)}(1)$ is diagonal and (up to $\phi$)
the set of its diagonal elements has a form
\linebreak
$\{-p\lambda,(-p+1)\lambda,\ldots,-\lambda,0,\lambda,\ldots,q\lambda\}$
with $p,q\in\mathbb{N}$.
\end{thm}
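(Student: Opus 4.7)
The plan is to normalize the weight, pin down the minimal polynomial of $a = R(1)$ via Lemma~\ref{lem5} combined with a discrete-difference identity, diagonalize, and then use the classification of RB-operators on a direct sum of fields. By Statement~\ref{sta1}(b) I rescale and assume $\lambda = -1$; let $\pi(t)$ be the minimal polynomial of $a := R(1)$, of degree~$d$. Lemma~\ref{lem5} gives $R(a^n) = F_n(a)$, so $\pi(a) = 0$ forces $\Pi(a) = 0$ where $\Pi(t) = \sum_k \pi_k F_k(t)$ has degree $d+1$. Since $\pi \mid \Pi$ and the degrees differ by one, $\Pi = q\pi$ with $q(t) = (t-\alpha)/(d+1)$ for some $\alpha \in F$. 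The telescoping identity $F_k(t) - F_k(t-1) = t^k$ gives $\Pi(t) - \Pi(t-1) = \pi(t)$, and combining these yields the functional equation $(t - \alpha - d - 1)\,\pi(t) = (t - 1 - \alpha)\,\pi(t-1)$. Passing to the generating function $f(z) = \sum_\mu \mathrm{mult}_\pi(\mu)\, z^\mu$ of the multiset of roots of $\pi$, this reads $(1-z)f(z) = z^{1+\alpha}(1 - z^d)$, so $f(z) = z^{1+\alpha} + z^{2+\alpha} + \cdots + z^{d+\alpha}$. Hence the roots of $\pi$ are exactly $\{1+\alpha, 2+\alpha, \ldots, d+\alpha\}$, each of multiplicity one, and $a$ is diagonalizable.

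Since $F$ is algebraically closed, there is $\psi \in \Aut(M_n(F))$ with $R^{(\psi)}(1)$ diagonal; replace $R$ by $R^{(\psi)}$ so that $a = R(1)$ is diagonal with (distinct) diagonal values among $\{1+\alpha, \ldots, d+\alpha\}$. Because $R(a^n) = F_n(a) \in F[a]$, the commutative subalgebra $F[a] \cong F^d$ (spanned by the spectral idempotents $e_1, \ldots, e_d$ of $a$) is $R$-invariant, and $R|_{F[a]}$ is an RB-operator of weight $-1$ on $F^d$. Apply Proposition~\ref{pro1} to the weight-$1$ rescaling $-R|_{F[a]}$: writing $R(e_j) = \sum_k r_{jk} e_k$, each $r_{jk} \in \{-1, 0, 1\}$ and $r_{jj} \in \{0, 1\}$. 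The relation $R(1) = \sum_k (k+\alpha)e_k$ gives the column sums $\sum_j r_{jk} = k+\alpha$, so $\alpha \in \mathbb{Z}$. The elementary bounds $r_{kk} \geq 0$ and $r_{jk} \geq -1$ yield $\sum_j r_{jk} \geq 1 - d$, while their duals yield $\sum_j r_{jk} \leq d$; evaluating at $k = 1$ and $k = d$ forces $\alpha \in \{-d, -d+1, \ldots, 0\}$.

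For $\alpha \in \{-d, \ldots, -1\}$ the set $\{1+\alpha, \ldots, d+\alpha\}$ already has the stated form $\{-p, -p+1, \ldots, 0, \ldots, q\}$ with $p = -1-\alpha$ and $q = d+\alpha$. For $\alpha = 0$, apply $\phi$: then $\phi(R)(1) = 1 - a$ is still diagonal with spectrum $\{1-d, 2-d, \ldots, 0\}$, again of the required form. Finally, rescaling back from weight $-1$ to weight $\lambda$ via Statement~\ref{sta1}(b) produces the diagonal spectrum $\{-p\lambda, (-p+1)\lambda, \ldots, q\lambda\}$ (possibly after relabelling $p \leftrightarrow q$). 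The main obstacle I expect is the multiset/generating-function step of the first paragraph, where the functional equation must be shown to force each root of $\pi$ to have multiplicity exactly one; once $a$ is diagonalizable with arithmetic-progression spectrum, the integrality of $\alpha$ from Proposition~\ref{pro1} and the use of $\phi$ to include $0$ in the spectrum are routine.
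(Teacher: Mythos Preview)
Your argument is correct and substantially cleaner than the paper's. The paper puts $R(1)$ in Jordan normal form, works in the $R$-invariant subalgebra $A=F[1,R(1)]$, and then runs a three-case analysis (kernel vector nilpotent, kernel equals the idempotent for the eigenvalue~$0$, kernel equals some other primitive idempotent), with a nested lemma to handle the invertible piece; along the way it uses the generalized Vandermonde determinant (Lemma~\ref{lem6}) to control the basis of~$A$. You bypass all of this with a single identity: writing the minimal polynomial $\pi$ and applying $R$ via Lemma~\ref{lem5} yields $\Pi=q\pi$ with $q$ linear, and the telescoping $F_k(t)-F_k(t-1)=t^k$ turns this into the functional equation $(t-\alpha-d-1)\pi(t)=(t-1-\alpha)\pi(t-1)$, which pins down the roots of~$\pi$ in one stroke. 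In particular your argument gives Lemma~\ref{lem3} for free in the matrix case (a nilpotent $R(1)$ would force $\pi$ to have a repeated root, contradiction), whereas the paper invokes Lemma~\ref{lem3} separately at the outset. The finish via Proposition~\ref{pro1} on $F[a]\cong F^d$ to extract integrality of $\alpha$ and the range $\alpha\in\{-d,\dots,0\}$ is also new relative to the paper, which instead tracks pre-images $R'(x_k)=x_{k-1}$ explicitly. What the paper's approach buys is a finer picture of how $R$ acts on the individual spectral idempotents (used later in Proposition~\ref{pro2} and Corollary~\ref{Cor:diagM3}); your approach buys brevity and a conceptual explanation for \emph{why} the spectrum is an arithmetic progression.

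The one step you flag yourself---the generating-function passage---is easily made rigorous without $z^\mu$ for general $\mu\in F$: compare root multisets on both sides of the functional equation to get, for every $\nu\in F$,
\[
[\nu=\alpha+d+1]+m_\nu=[\nu=1+\alpha]+m_{\nu-1},
\]
where $m_\nu=\mathrm{mult}_\pi(\nu)$. In characteristic zero the orbits of $\nu\mapsto\nu-1$ are copies of $\mathbb{Z}$; on any orbit not containing $1+\alpha$ the recursion forces $m$ constant, hence zero (finite total degree), and on the orbit $\alpha+\mathbb{Z}$ summing the recursion gives $m_{\alpha+k}=[1\le k\le d]$. That is exactly your conclusion, with no need to interpret $z^\mu$.
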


\begin{proof}
If $R(1)$ is nilpotent, we are done by Lemma~\ref{lem3}.
Otherwise, consider the algebra $A$ generated by $1$ and $R(1)$.
If $1$ and $R(1)$ are linearly dependent, we are done by Lemma~\ref{lem1}(a).
From~(\ref{Stirling1}),~(\ref{Stirling2}) it follows
that $A$ is closed under the action of~$R$.
By Lemma~\ref{lem7}(a), we may suppose that $\ker(R)\neq(0)$ on~$A$.

Up to conjugation, we suppose that $R(1)$
is in the Jordan form~$J$.
Let $m(x) = (x-\lambda_1)^{m_1}\ldots(x-\lambda_k)^{m_k}$
be a minimal polynomial of $J$, $\lambda_i\neq\lambda_j$ for $i\neq j$.
Express $J$ as a sum of Jordan blocks
$J(\lambda_1),\ldots,J(\lambda_k)\in M_n(F)$.
Introduce matrices
$e_{i}^s\in M_n(F)$, $i=1,\ldots,k$,
such that $J(\lambda_i) = \lambda_i e_{i}^1 + e_{i}^2$,
$e_i^1$ is a~diagonal matrix in $M_n(F)$ and
$e_{i}^s = \big(J(\lambda_i) - \lambda_i e_{i}^1\big)^{s-1}$
for all $s\geq 2$. Note that $e_i^s = 0$ for $s>m_i$.

Lemma~\ref{lem6} implies that
$\big\{e_{i}^1,\ldots,e_{i}^{m_i}\mid i=1,\ldots,k\big\}$
is a linear basis of $A$. Moreover, one of $\lambda_i$ is zero
(denote $\lambda_k = 0$), otherwise $R$ is invertible on $A$.

Another linear basis of $A$ is $1,a,a^2,\ldots,a^{m-1}$, where $m = m_1+\ldots+m_k$.
By Lemma~\ref{lem5}, $R(a^k) = F_k(a)$. Hence,
$R(1),R(a),R(a^2),\ldots,R(a^{m-2})$ are linearly independent.
So, $\dim(\ker R) = 1$ on $A$.

From Lemma~\ref{lem6}, all $e_{i}^s$, $s=1,\ldots,m_i$, lie in $\Imm(R)$
for $i=1,\ldots,k-1$. Also, it follows from $R(1)\in\Imm(R)$  that
$e_{k}^2,\ldots,e_{k}^{m_k}\in\Imm(R)$.
Therefore, $R(v)$ has zero projection on $e_{k}^1$ for all $v\in A$
in the decomposition on the basis $e_i^s$, $i=1,\ldots,k$, $s=1,\ldots,m_i$.

For the ideal $A_k = \Span\big\{e_k^s\mid s=1,\ldots,m_k\big\}$,
consider the induced RB-operator $R_k$ from $R$ by Lemma~\ref{lem2}.
We have that $R_k\big(e_{k}^1\big)$ is nilpotent. As $e_k^1$ is a unit in $A_k$,
by Lemma~\ref{lem3} we have $R_k\big(e_k^1\big) = 0$.
Representing $A$ as $A'\oplus A_k$
for $A' = \Span\big\{e_{i}^s\mid i=1,\ldots,k-1;s=1,\ldots,m_i\big\}$,
we define
\begin{equation}\label{Diagonal:formulas}
R(1) = a + b,\quad
R(e') = a_1 + b_1,\quad
R\big(e_k^1\big) = a_2,
\end{equation}
where $a,a_1,a_2\in A'$, $b,b_1\in A_k$,
$e' = \sum\limits_{i=1}^{k-1}e_i^1$.
As $e' + e_k^1 = 1$, we get $b_1 = b = e_k^2$, $a_1 + a_2 = a$.

Calculating $R\big(e_k^1\big)R\big(e_k^1\big)$ by~(\ref{RB}), we have
$a_2(a_2+1) = 0$, i.e., $a_2 = -\sum\limits_{i\in I}e_i^1$
for some $I\subset\{1,\ldots,k-1\}$.
Introduce $\lambda_i' = \lambda_i$, if $i\not\in I$,
and $\lambda_i' = \lambda_i+1$, otherwise.

Let $x$ be a nonzero vector from $\ker(R)$.
Calculating $R(x)R(1)$ by~(\ref{RB}),
we get $xR(1)\in \ker R$.
As $\ker R$ is one-dimensional algebra,
$x^2 = \alpha x$ for $\alpha\in F$.
Further, we will consider three possibilities,
Case~1: $\alpha = 0$ and $x^2 = 0$. If $\alpha \neq 0$, then
$x$ is diagonal, i.e., $x = x'+x_k$ for
$x'\in\Span\big\{e_i^1\mid i=1,\ldots,k-1\big\}$ and
$x_k\in\Span\big\{e_k^1\big\}$.
Since $xR(1)\in\ker R$, either
$e_k^1\in\ker R$ (Case~2) or
$e_i^1\in\ker R$ for some $i\in\{1,\ldots,k-1\}$ (Case~3).

{\bf Case 1}: $x^2 = 0$, then by $xR(1)\in\ker R$
we have $x = \gamma e_{i_1}^{m_{i_1}}$ for some $i_1\in\{1,\ldots,k-1\}$,
$m_i>1$ and nonzero $\gamma\in F$. Let $\gamma = 1$ and denote $x$ as $x_1$.
As it was noted above, there exists $x_2\in A$ such that $R(x_2) = x_1$.
From
$$
0 = x_1^2 = R(x_2)R(x_2) = R\big(x_1x_2 + x_2x_1 - x_2^2\big)
  = -R\big(x_2^2\big),
$$
we have $x_2^2\in\ker(R)$. Representing $x_2$ as $x_2' + x_2''$ for
$x_2'\in\Span\big\{e_i^s\mid s=1,\ldots,m_i\big\}$
and $x_2''\in\Span\big\{e_j^s\mid j\neq i,s=1,\ldots,m_j\big\}$,
we have $(x_2')^2 = \varepsilon x_1$, $\varepsilon\in F$,
and $(x_2'')^2 = 0$.

Also, by
$$
\lambda_i x_1
 = R(1)x _1
 = R(1)R(x_2)
 = R(R(1)x_2 + x_1 - x_2) = R(R(1)x_2) - x_1,
$$
we get $R(1)x_2 = (\lambda_i+1)x_2 + \delta x_1$, $\delta\in F$.
Thus, $-x_2' + e_{i_1}^2x_2' = \delta x_1$ and
$(\lambda_i+1)x_2'' = R(1)x_2''$. From these equalities,
we get $x_2' \in \Span\{x_1\}$ and $x_2'' = \beta e_{i_2}^{m_{i_2}}$
for $\beta\neq0$ and $i_2$ such that
$\lambda_{i_2} = \lambda_{i_1}+1$, $m_{i_2}>1$. Moreover, $x_2^2 = 0$.

On the step $t$, we find $x_t$ such that $R(x_t) = x_{t-1}$. From
\begin{gather*}
0 = x_{t-1}^2 = R(x_t)R(x_t) = 2R\big(x_t x_{t-1}- x_t^2\big),\\
R(x_{t-1}+x_tR(1)-x_t) = R(1)R(x_t) = R(1)x_{t-1}
 = (\lambda_{i_1}+t-2)x_{t-1} + u,
\end{gather*}
where $u\in\Span\{x_1,\ldots,x_{t-2}\}$, by induction, we get
\begin{gather}
x_t^2 = 2x_t x_{t-1} + \psi x_1,\quad \psi\in F, \label{DiagMat:Nilp1} \\
R(1)x_t = (\lambda_{i_1}+t-1)x_t + w,\quad w\in\Span\{x_1,\ldots,x_{t-1}\}. \label{DiagMat:Nilp2}
\end{gather}

Represent $x_t$ as $\sum\limits_{j=1}^{t-1}x_{t,j} + x_t^*$
for $x_{t,j}\in\Span\big\{e_{i_j}^s\mid s=1,\ldots,m_{i_j}\big\}$,
$x_t^*\in\Span\big\{e_p^s\mid p\not\in\{i_1,\ldots,i_{t-1}\},s\ge1\big\}$.
By~(\ref{DiagMat:Nilp1}) and~(\ref{DiagMat:Nilp2}),
we get $\big(x_t^*\big)^2 = 0$ and
$R(1)x_t^* = (\lambda_{i_1}+t-1)x_t^*$.
Thus, either $x_t^* = 0$ or
$x_t^* = \chi e_{i_t}^{m_{i_t}}$, $\chi\in F\setminus\{0\}$,
$\lambda_{i_t} = \lambda_{i_1}+t-1$ and $m_{i_t}>1$.

By~(\ref{DiagMat:Nilp2}), we have
$(j-t)x_{t,j} + e_{i_j}^2 x_{t,j}\in\Span\big\{e_{i_j}^{m_{i_j}}\big\}$,
$1\leq j\leq t-1$. So, $x_{t,j}\in\Span\big\{e_{i_j}^{m_{i_j}}\big\}$.
Hence, $x_t^*\neq0$ and $x_t^2 = 0$.

We can continue the process endless, as all $e_i^{m_i}$ for $m_i>1$ lie in $\Imm (R)$.
It is a~contradiction to the fact that $A$ is a finite-dimensional algebra.

{\bf Case 2}: $e_k^1\in\ker(R)$.
From $e_k^1R(1) = e_k^1(a+b) = b\in\ker R$,
we get that $b = 0$, it means $m_k = 1$.
Define $R'$ as induced RB-operator $R$ on $A'$ (see Lemma~\ref{lem2}).
So, $R'(e') = a$ and by Lemma~\ref{lem6}, $R'$ is invertible on $A'$.
We finish Case~2 by the following result:

\begin{lem}\label{lem8}
Given an invertible RB-operator $R'$ on $A'$,
we have $m_i = 1$ for all $i = 1,\ldots,k-1$
and $\{\lambda_1,\ldots,\lambda_{k-1}\} = \{1,2,\ldots,k-1\}$.
\end{lem}

\begin{proof}
Define $x_k$ such that $(R')^k(x_k) = 1$. We have that $R'(x_1) = e'$.
From~(\ref{PreimageOf1}), we get $x_1 = x_1^2$.
From $a_1 = R'(x_1)R'(e') = a_1 + R'(x_1a_1-x_1)$,
we have $x_1a_1 = x_1$.
The only possibility is the following:
one of $\lambda_i$ equals~1, $m_i = 1$ and $x_1 = e_i^1$.
Exchange indices $1,\ldots,k-1$
in such way that $\lambda_1' = \lambda_1 = 1$.

For $x_2$, we have
$$
a_1 x_1 = R'(x_2)R'(1)
 = R'(x_1 + x_2a_1) - R'(x_2)
 = R'(x_1 + x_2a_1) - x_1,
$$
so, $R'(x_1 + x_2a_1) = x_1(a_1+1) = 2x_1 = 2R'(x_2)$, i.e.,
\begin{equation}\label{Diagonal:PreimageOf1}
x_1 + x_2a_1 = 2x_2.
\end{equation}
Representing $x_2$ as $\alpha x_1 + x_1^*$,
$x_1^*\in\Span\big\{e_i^s\mid 2\leq i\leq k-1;s=1,\ldots,m_i\big\}$,
from~(\ref{Diagonal:PreimageOf1}) we have
$\alpha = 1$ and either $x_2 = x_1$ or
one of $\lambda_i'$
(denote $\lambda_2'$) equals~2,
$m_2 = 1$ and $x_2 = x_1 + \beta e_2^1$
for some $\beta\in \mathbb{C}^*$.
Note that $\lambda_2' = \lambda_2$,
otherwise $2 = \lambda_2' = \lambda_2 +1$
or $\lambda_2 = \lambda_1 = 1$, a contradiction.
If $x_2 = x_1$, then $R'(x_2) = x_1 = R(x_1) = e'$,
so $a'$ is diagonal, $k=2$ and $m_1 = 1$.
If $x_2\neq x_1$, we proceed with $x_3$ an so on.
In every step $t\geq3$ we deal with the equality
$$
a_1 x_{t-1} = R'(x_t)R'(1)
 = R'(x_{t-1} + x_t a_1) - R'(x_t)
 = x_{t-2} + R'(x_t a_1) - x_{t-1}.
$$
Express $x_t = y_t + z_t$ for
$y_t\in\Span\big\{e_1^1,\ldots,e_{t-1}^1\big\}$
and $z_t\in\Span\{e_i^s\mid i\geq t; s=1,\ldots,m_i\}$.
Then $a_1 z_t = tz_t$ and so either $z_t = 0$ or
one of $\lambda_i'$
(denote $\lambda_t'$) equals~$t$,
$m_t = 1$ and $x_t = y_1 + \gamma e_t^1$
for some $\gamma\in \mathbb{C}^*$.

The space $A'$ is finite-dimensional,
so $x_p = \sum\limits_{j=1}^{p-1}\alpha_j x_j$
for some $p$. Consider the minimal such~$p$.
So,
$$
e' = (R')^p(x_p)
  = \sum\limits_{j=1}^{p-1}\alpha_j (R')^{p-j}(e').
$$
By~(\ref{Stirling1}), we get
$e'\in\Span\big\{a_1,a_1^2,\ldots,a_1^{p-1}\big\}$.
It may happen only if $p-1$ is a degree
of the minimal polynomial of $a_1$,
the spectrum of $a_1$ coincide with $\{1,2,\ldots,p-1\}$
and $m_1 = \ldots = m_{p-1} = 1$.
We have also proved that $\lambda_i' = \lambda_i$ for all $i=1,\ldots,k-1$.
Lemma~\ref{lem8} is proved.
\end{proof}

{\bf Case 3}: $y_1 = e_{i_1}^1\in\ker(R)$ for some $i_1\in\{1,\ldots,k-1\}$.
From $e_{i_1}^1R(1)\in \ker R$, we get $m_{i_1} = 1$.

Consider $y_2\in A$ such that $R(y_2) = y_1 = e_{i_1}^1$
and $y_2\in\Span\big\{e_i^s\mid i\neq i_1\big\}$. From
\begin{gather*}
y_1 = R(y_2)R(y_2) = 2R(y_1 y_2) - R\big(y_2^2\big), \\
\lambda_{i_1} y_1 = R(y_2)R(1) = R(y_2R(1)) - y_1,
\end{gather*}
we have
$y_2 + y_2^2 = 0$ and $(\lambda_{i_1}+1)y_2 = y_2 R(1)$.
We conclude that $y_2 = -e_{i_2}^1$ for some $i_2$.
Moreover, $m_{i_2} = 1$ and $\lambda_{i_2} = \lambda_{i_1}+1$.

On the step $t$, we find
$y_t\in \Span\big\{e_i^s\mid i\neq i_1\big\}$
such that $R(y_t) = y_{t-1}$. From
\begin{gather*}
y_{t-1}^2 = R(y_t)R(y_t) = 2R(y_{t-1} y_t) - R\big(y_t^2\big), \\
y_{t-1} R(1) = R(y_t)R(1) = R(y_{t-1} + y_tR(1)) - y_{t-1},
\end{gather*}
by induction arguments, we get that
$y_t^2 + (-1)^{t}y_t$ and $y_tR(1) - (\lambda_{i_1}+t-1)y_t$
lie in $\Span\{y_1,\ldots,y_{t-1}\}$.
So, $y_t = (-1)^{t+1}e_{i_t}^1 + w$ for $w\in \Span\{y_1,\ldots,y_{t-1}\}$,
and $m_{i_t} = 1$, $\lambda_{i_t} = \lambda_{i_1}+t-1$.

As $A$ is finite-dimensional and $e_{k}^1$ is the only
element from $e_1^1,\ldots,e_k^1$
which is not in $\Imm(R)$, we get $y_p = (-1)^{p-1}e_{k}^1 + u$,
$u\in\Span\{y_1,\ldots,y_{p-1}\}\subseteq A'$, on a step $p$.

Consider
$A_1 = \Span\big\{e_{i_1}^1,\ldots,e_{i_p}^1\big\}$ and
$A_2 = \Span\big\{e_i^s\mid i\not\in\{i_1,\ldots,i_p\};s=1,\ldots,m_i\big\}$.
We have $A = A_1 \oplus A_2$.
Moreover, $A_1$ is closed under the action of $R$ and $\dim (\ker R) = 1$ on $A_1$.
Thus, $A_2$ is closed under $R$ and $R$ is invertible
on $A_2$. We are done by Lemma~\ref{lem8}.
\end{proof}

\begin{rem}
For every $p,q\in\mathbb{N}$,
we apply Examples~\ref{exm10} and~\ref{exm11}
to construct an RB-operator $R$ on a matrix algebra with diagonal matrix $R(1)$ which has the numbers
$-p\lambda,(-p+1)\lambda,\ldots,-\lambda,0,\lambda,\ldots,q\lambda$
on the diagonal.
\end{rem}

Let us call an RB-operator $R$ on $M_n(F)$ {\it diagonal}, if
$R^{(\psi)}(D_n)\subseteq D_n$ for some $\psi\in\Aut(M_n(F))$.
By Theorem~\ref{RBOnM2nonzero}, all RB-operators of nonzero weight on $M_2(F)$ are diagonal.

Let us apply Theorem~\ref{thm11} to prove the following general statement.

\begin{pro}\label{pro2}
Let $R$ be an RB-operator of nonzero weight on $M_n(F)$
and $R(1) = (a_{ij})_{i,j=1}^n$ be a diagonal matrix with
$k$~different values of diagonal elements:
$a_{ii} = \lambda_j$, if $m_{j-1}+1\leq i\leq m_j$,
where $m_1+m_2+\ldots+m_k = n$ and $m_0 = 0$.
Define $B = B_1 \oplus \ldots \oplus B_k$ (as vector spaces) for
$B_j = \Span\{e_{ii}\mid m_{j-1}+1\leq i\leq m_j\}$.
Then $B$ is $R$-invariant subalgebra of $M_n(F)$.
If $k = n$, then $R$ is a diagonal RB-operator.
\end{pro}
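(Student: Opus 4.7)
The plan is to show $R(D_n)\subseteq D_n$, noting that the stated definition gives $B=D_n$ literally (since $\bigcup_j I_j = \{1,\ldots,n\}$); the concluding clause ``$R$ is diagonal when $k=n$'' then follows at once by taking $\psi=\mathrm{id}$ in the definition of ``diagonal RB-operator''. I proceed by induction on $n$; the cases $n=1$ (trivial) and $n=2$ (direct enumeration of the list in Theorem~\ref{RBOnM2nonzero}, where each $R$ with $R(1)\in D_2$ visibly sends $D_2$ into $D_2$) serve as bases.

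\emph{Centralizer reduction.} Substituting $x=1,\,y=e_{ii}$ in~\eqref{RB} and subtracting the swapped instance yields $[R(1),R(e_{ii})]=0$, so $R(e_{ii})$ lies in the centralizer $C$ of $R(1)$. Because $R(1)$ is diagonal with eigenvalues partitioned by $I_1,\ldots,I_k$, this centralizer is the block-diagonal subalgebra $C=\bigoplus_{j=1}^{k}M_{m_j}(F)$. Applying the same eigenvector calculation to an arbitrary $e_{pq}$ shows that $R$ preserves every $\mathrm{ad}_{R(1)}$-eigenspace; in particular $R(C)\subseteq C$, so $R|_C$ is an RB-operator of weight $\lambda$ on $C$. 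Lemma~\ref{lem2} applied to the ideal decomposition $C=\bigoplus_j M_{m_j}(F)$ now makes each $R_j:=\mathrm{Pr}_j R|_{M_{m_j}(F)}$ an RB-operator of weight $\lambda$ on $M_{m_j}(F)$.

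\emph{Block-diagonal conjugation and induction.} Theorem~\ref{thm11} applied to each $R_j$ furnishes $g_j\in GL_{m_j}(F)$ with $g_j^{-1}R_j(1_{M_{m_j}(F)})g_j$ diagonal; the block-diagonal matrix $g=g_1\oplus\cdots\oplus g_k$ commutes with $R(1)$ (which is central in $C$), so conjugating $R$ by $\mathrm{Ad}_g\in\mathrm{Aut}(M_n(F))$ fixes $R(1)$ while simultaneously making every $R_j(1_{M_{m_j}(F)})$ diagonal. When $k\geq2$ each $m_j<n$, so the inductive hypothesis applied to $R_j$ gives $R_j(D_{m_j})\subseteq D_{m_j}$; equivalently, for $i\in I_j$ the block-$j$ component $\mathrm{Pr}_j R(e_{ii})=R_j(e_{ii})$ is diagonal in $M_{m_j}(F)$.

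\emph{Off-block diagonality.} The main obstacle is to show that for $i\in I_j$ and $l\neq j$ the cross-block component $U:=\mathrm{Pr}_l R(e_{ii})\in M_{m_l}(F)$ is also diagonal. Fix any $p\in I_l$; since $e_{ii}e_{pp}=0$, the RB identities with $(x,y)=(e_{ii},e_{pp})$ and $(e_{pp},e_{ii})$ read $R(e_{ii})R(e_{pp})=R(R(e_{ii})e_{pp}+e_{ii}R(e_{pp}))$ and its swap. Projecting to block $l$, invoking the diagonality of $R_j(e_{pp})$ and $R_l(e_{pp})$ furnished by the previous paragraph, and subtracting the two projected identities produces the relation $[R_l(e_{pp}),U]=R_l([e_{pp},U])$. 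Because $R_l(1_{M_{m_l}(F)})$ has the distinctive consecutive-integer spectral structure prescribed by Theorem~\ref{thm11}, this commutator identity—applied across all $p\in I_l$—forces $U$ to commute with every $e_{pp}$ ($p\in I_l$). Since the $e_{pp}$ span a maximal abelian subalgebra of $M_{m_l}(F)$, we conclude $U\in D_{m_l}(F)$. Finally, the base case $k=1$ is handled by Lemma~\ref{lem1}(a): $R(1)\in F$ forces $R$ to be splitting, and a direct inspection using the subalgebra dimension bound of Lemma~\ref{lem4} shows $R(D_n)\subseteq D_n$ up to $\phi$. Combining all steps, $B=D_n$ is $R$-invariant, and when $k=n$ the operator $R$ itself is diagonal.
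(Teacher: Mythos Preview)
Your reading of $B$ as $D_n$ follows the statement literally, but the definition of $B_j$ in the paper contains a typo: the intended definition is $B_j=\Span\{e_{il}\mid m_{j-1}+1\le i,l\le m_j\}$, so that $B$ is the full block-diagonal subalgebra (the centralizer of $R(1)$), not $D_n$. This is evident from the paper's own proof (which writes ``$x=e_{ij}$ for $e_{ij}\in B_k$'' and deduces from $[R(1),R(e_{ij})]=0$ only that $R(e_{ij})$ lies in the centralizer of $R(1)$), from the closing sentence ``If $k=n$, then $B$ is the subalgebra of diagonal matrices'' (implying $B\neq D_n$ when $k<n$), and from the application in Corollary~\ref{Cor:diagM3}, where the block $B_1=\Span\{e_{ij}\mid 1\le i,j\le 2\}$ is explicitly the full $2\times2$ block. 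For this intended statement the paper's argument is two lines: the RB identity gives $[R(1),R(x)]=R([R(1),x])$, so $x\in B$ (i.e.\ $[R(1),x]=0$) forces $R(x)\in B$. Your ``Centralizer reduction'' paragraph already contains exactly this, so you have the correct proof of what was meant.

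The stronger assertion $R(D_n)\subseteq D_n$ that you go on to argue is \emph{false}. Take $n=2$, weight~$1$, and let $R'$ be the conjugate by $\psi=\mathrm{Ad}_g$, $g=\begin{pmatrix}1&0\\1&1\end{pmatrix}$, of the operator (M3) of Theorem~\ref{RBOnM2nonzero} with $\alpha=\gamma=1$. Since (M3) has $R(1)=0$, the conjugate also satisfies $R'(1)=0$ (diagonal, $k=1$), yet a direct computation gives
\[
R'(e_{11})=g^{-1}R(ge_{11}g^{-1})g=\begin{pmatrix}-2&-1\\0&0\end{pmatrix}\notin D_2.
\]
This shows precisely where your argument breaks. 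Your base case $n=2$ (``direct enumeration of the list in Theorem~\ref{RBOnM2nonzero}'') is insufficient because that list is only up to conjugation, $\phi$ and transpose, while the proposition concerns a fixed $R$. Your treatment of $k=1$ via Lemma~\ref{lem1}(a) and Lemma~\ref{lem4} is unjustified: neither lemma says anything about compatibility of a splitting decomposition with $D_n$. In the inductive step you conjugate by a block-diagonal $g$; this changes $R$ to $R^{(\psi)}$, so at best you conclude $R^{(\psi)}(D_n)\subseteq D_n$, not $R(D_n)\subseteq D_n$. Finally, the ``Off-block diagonality'' step asserts without proof that $[R_l(e_{pp}),U]=R_l([e_{pp},U])$ forces $U$ to commute with each $e_{pp}$; nothing about the spectrum of $R_l(1)$ yields this.
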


\begin{proof}
Without loss of generality, assume that $R$ is an RB-operator of weight $-1$. By~(\ref{RB}),
$$
R(1)R(x) = R(R(1)x + R(x) - x),\quad
R(x)R(1) = R(xR(1) + R(x) - x)
$$
and $[R(1),R(x)] = R([R(1),x])$.

Considering $x = e_{ij}$ for $e_{ij}\in B_k$,
we get $R(1)R(e_{ij}) = R(e_{ij})R(1)$.
From the last equality we have $R(e_{ij})\in B$.

If $k = n$, then $B$ is the subalgebra of diagonal matrices
and $R$ is diagonal RB-operator by the definition.
\end{proof}

\begin{cor}\label{Cor:diagM3}
Let $F$ be an algebraically closed field of characteristic zero.
All RB-operators of nonzero weight on $M_3(F)$ are diagonal.
\end{cor}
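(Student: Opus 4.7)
The proof combines Theorem~\ref{thm11}, Proposition~\ref{pro2}, and the $M_2(F)$-classification recorded in Theorem~\ref{RBOnM2nonzero}. By Theorem~\ref{thm11}, after conjugating by some $\psi\in\Aut(M_3(F))$ and possibly applying $\phi$, we may assume that $R(1)$ is diagonal. Examining the proof of Theorem~\ref{thm11}---the opening paragraph reduces the cases where $R(1)$ is nilpotent or scalar to $R(1)=0$, while Cases~2 and~3 invoke Lemma~\ref{lem8} to force every eigenvalue of $R(1)$ to have multiplicity one---we see that for $n=3$ only two configurations survive: either $R(1)=0$, or $R(1)$ has three pairwise distinct diagonal entries.

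In the three-distinct-entries configuration the parameter $k$ of Proposition~\ref{pro2} equals $n=3$, and the proposition yields at once that $R$ is diagonal.

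In the configuration $R(1)=0$, Lemma~\ref{lem1}(a) shows that $R$ is splitting: $M_3(F)=A_1\oplus A_2$ as a vector-space sum of subalgebras with $1\in A_1$ and $R=-\lambda\,\mathrm{Pr}_{A_2}$. To conclude it is enough to exhibit a maximal torus $T\subseteq M_3(F)$ with $T=(T\cap A_1)\oplus(T\cap A_2)$, since any $\psi\in\Aut(M_3(F))$ sending $D_3$ to $T$ then makes $R^{(\psi)}$ preserve $D_3$. Decompose $1=e_1+\dots+e_r$ into pairwise orthogonal primitive idempotents of $A_1$ (Wedderburn--Malcev), and let $r_i:=\mathrm{rank}_{M_3(F)}(e_i)$, so that $r_1+\dots+r_r=3$. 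If $r=3$, each $e_i$ is already primitive in $M_3(F)$ and $T:=\Span\{e_1,e_2,e_3\}\subseteq A_1$ is the required torus. If $r=2$ with parts $(2,1)$, the Peirce component $e_1 M_3(F) e_1\cong M_2(F)$ inherits the splitting $e_1 A_1 e_1\oplus e_1 A_2 e_1$, to which Theorem~\ref{RBOnM2nonzero} applies to produce a compatible maximal torus $T'\subset M_2(F)$; then $T:=T'\oplus F e_2$ works. In the residual subcase $r=1$ ($A_1$ local), the bound $\dim A_2\le n^2-n+1=7$ from~\cite{Agore} excludes $\dim A_1=2$ (a non-unital subalgebra of dimension~$7$ cannot exist in $M_3(F)$), and the cases $\dim A_1\ge 3$ are settled by inspecting the short list of conjugacy classes of local unital subalgebras of $M_3(F)$ and checking which ones admit a complementary subalgebra.

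The principal obstacle is the splitting scenario $R(1)=0$: Proposition~\ref{pro2} is vacuous when $R(1)$ is scalar, and the argument must resort to the Peirce decomposition of $A_1$ combined with the $M_2(F)$-classification. The local subcase $r=1$ is the most delicate, since neither tool applies directly and one must instead verify by hand which local unital subalgebras of $M_3(F)$ can actually appear as a direct summand of a subalgebra splitting.
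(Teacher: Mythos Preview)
Your dichotomy ``either $R(1)=0$ or $R(1)$ has three pairwise distinct diagonal entries'' is incorrect, and this is the central gap. In the proof of Theorem~\ref{thm11} the integers $m_i$ are the exponents in the \emph{minimal polynomial} $m(x)=(x-\lambda_1)^{m_1}\cdots(x-\lambda_k)^{m_k}$ of $R(1)$; Lemma~\ref{lem8} forces $m_i=1$, which says only that $R(1)$ is diagonalisable, not that its eigenvalues are simple as a matrix in $M_3(F)$. Thus $R(1)=\mathrm{diag}(0,0,\lambda)$ (or its $\phi$- and permutation-variants) is a genuine possibility that you have omitted. The paper treats this two-eigenvalue case explicitly: Proposition~\ref{pro2} makes the block subalgebra $B\cong M_2(F)\oplus F$ $R$-invariant, and the induced RB-operator on the $M_2(F)$-block is handled via Theorem~\ref{RBOnM2nonzero}, after which one checks that conjugating the $M_2$-block into diagonal form keeps $D_3$ invariant.

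Your treatment of the splitting case $R(1)=0$ also has problems. For $r=2$ you assert that $e_1 M_3(F) e_1 = e_1 A_1 e_1 \oplus e_1 A_2 e_1$ as subalgebras, but since $e_1\in A_1$ and $e_1\notin A_2$, the space $e_1 A_2 e_1$ need not be a subalgebra and the sum need not be direct; Theorem~\ref{RBOnM2nonzero} therefore does not apply as stated. The local subcase $r=1$ is left as an assertion (``inspecting the short list\ldots'') rather than an argument. The paper avoids these difficulties by proceeding instead through the five cases $\dim A_1=2,\ldots,6$, each time using Agore's description of maximal subalgebras of $M_3(F)$ to locate, after an explicit conjugation, a copy of $D_3$ compatible with the splitting $A_1\oplus A_2$.
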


\begin{proof}
Consider an RB-operator $R$ on $M_3(F)$.
By Theorem~\ref{thm11}, we may suppose that $R(1)$ is a diagonal matrix.
If $R(1)$ is scalar, then $R$ is splitting with one of subalgebras
being unital by Lemma~\ref{lem1}(a),
this case we will consider later.
If all diagonal elements of $R(1)$ are pairwise distinct,
then $R$ is diagonal by Proposition~\ref{pro2}.
Thus, $R(1)$ has exactly two different values on the diagonal.
Applying $\phi$, one of the values is zero.
By the proof of Theorem~\ref{thm11}, we have three cases:

1. $R(e_{11}+e_{22}) = e_{11}+e_{22}$, $R(e_{33}) = 0$,
$R(1) = 1(e_{11} + e_{22}) + 0e_{33}$;

2. $R(e_{11}+e_{22}) = 0$, $R(e_{33}) = -e_{11}-e_{22}$,
$R(1) = -1(e_{11} + e_{22}) + 0e_{33}$;

3. $R(e_{11}+e_{22}) = -e_{33}$, $R(e_{33}) = 0$,
$R(1) = 0(e_{11} + e_{22}) - 1e_{33}$.

Denote $B_1 = \Span\{e_{ij}\mid 1\leq i,j\leq 2\}$,
$B_2 = \Span\{e_{33}\}$, $B = B_1\oplus B_2$.
By Proposition~\ref{pro2}, $B$ is an $R$-invariant subalgebra of $M_3(F)$.
Consider the induced RB-operator $R_1$ from $R$ on $B_1\cong M_2(F)$.
By~Theorem~\ref{RBOnM2nonzero}, $R_1$ is diagonal on $B_1$
up to conjugation with an automorphism $\psi$ of $B_1$.
Extend the action of $\psi$ on the entire algebra $B$ as follows:
$\psi (e_{33}) = e_{33}$.
It is easy to calculate directly that
$R^{\psi}(D_3)\subseteq D_3$. Let us show it for the case~1,
the proofs for the cases~2 and~3 are analogous. Indeed,
$R_1^{\psi}(e_{11}) = d_1$, $R_1^{\psi}(e_{22}) = d_2$
for $d_1,d_2\in \Span\{e_{11},e_{22}\}$. Thus,
$R^{\psi}(e_{11}) = d_1 + \alpha e_{33}$,
$R^{\psi}(e_{22}) = d_2 + \beta e_{33}$
for some $\alpha,\beta\in F$.
Finally, $R^{\psi}(e_{33}) = \psi^{-1}R\psi(e_{33})
= \psi^{-1}R(e_{33}) = 0$.

Now, let $R$ be a splitting RB-operator of weight~1 with one of subalgebras being unital,
i.e., $M_3(F)$ equals a direct vector-space sum of subalgebras
$A_1 = \ker(R)$ and $A_2 = \ker (R+\id) = \Imm (R)$,
and we may assume that $1\in A_2$.
By Lemma~\ref{lem4}(a), we have $2\leq \dim (A_1)\leq 6$.
Consider all these variants:

1. $\dim A_1 = 2$, $\dim A_2 = 7$.
By Lemma~\ref{lem4}(b), up to conjugation $D_3\subset A_2$, we are done.

2. $\dim A_1 = 3$, $\dim A_2 = 6$.
Define $B$ as a~linear span of all matrix unities $e_{ij}$
except $e_{21}$ and $e_{31}$. As the algebra $B$ is a
unique up to conjugation maximal subalgebra in $M_3(F)$~\cite{Agore},
we may assume that $A_2$ is a subalgebra in $B$. Let
\begin{equation}\label{M2}
M = \Span\{e_{22},e_{23},e_{32},e_{33}\}\cong M_2(F),
\end{equation}
then $3\leq \dim(A_2\cap M)\leq 4$.
If $A_2\cap M = M$, then $e_{11}\in A_2$ (as $1\in A_2$)
and $\alpha e_{12}+\beta e_{13}\in A_2$ for some $\alpha,\beta \in F$
which are not zero simultaneously.
Hence, $(\alpha e_{12}+\beta e_{13})e_{22} = \alpha e_{12}\in A_2$
and $\beta e_{13}\in A_2$. If $\alpha = 0$, then $e_{13}\in A_2$
as well as $e_{13}e_{32} = e_{12}$, a contradiction.
If $\beta = 0$, then $e_{12}e_{23} = e_{13}\in A_2$,
a contradiction.

Thus, $\dim(A_2\cap M) = 3$.
By \cite{Agore}, there exists a matrix $S\in M$ such that
$S^{-1}(A_2\cap M)S = \Span\{e_{22},e_{23},e_{33}\}$.
Let $T = e_{11} + S$, then $B$ is invariant under
the conjugation with $T$. So, we get that
$A_2 = D_3\oplus U_3$ (as vector spaces) and therefore
$D_3$ is up to conjugation $R$-invariant.

3. $\dim A_1 = 4$, $\dim A_2 = 5$.
Consider $A_2\cap M$ for $M$ defined by~(\ref{M2}).
As above, we assume that $A_2$ is a subalgebra of $B$.
By the dimensional reasons, $\dim(A_2\cap M)\geq2$.
If $A_2\cap M = M$, then $D_3\subset A_2$
and we are done. If $\dim(A_2\cap M) = 3$,
then we deal with it as in the variant~2 and get $D_3\subset A_2$.
So, $\dim(A_2\cap M) = 2$ and as vector spaces
$$
A_2 = \Span\{e_{1i}+d_i\mid i=1,2,3;d_i\in M\}\oplus (A_2\cap M).
$$
Up to conjugation with the matrix from $B$,
we have either $A_2\cap M = \Span\{e_{22},e_{33}\}$
or $A_2\cap M = \Span\{e_{22}+e_{33},e_{23}\}$,
or $A_2\cap M = \Span\{e_{22},e_{23}\}$.
In the first case $D_3\subset A_2$ and we have done.
In the second case, $d_1 = 0$ as $1\in A_2$.
Also, $e_{12},e_{13}\in A_2$ as
$(e_{22}+e_{33})(e_{12}+d_2) = d_2\in A_2$
and analogously $d_3\in A_2$.
So, $A_2 = U_3\oplus\Span\{e_{11},e_{22}+e_{33}\}$ (as vector spaces).
Let us show that the third case either
can be reduced to the similar subalgebra or contains $D_3$ up to conjugation.

Indeed, we may assume that $d_1,d_2,d_3\in\Span\{e_{32},e_{33}\}$.
As $1\in A_2$, we have $e_{11}+e_{33}\in A_2$.
Also, $(e_{12}+d_2)e_{22} = e_{12} + d_2 e_{22}\in A_2$.
Thus, $d_2 = ke_{32}$ for some $k\in F$.
Since $(e_{12}+d_2)e_{23} = e_{13} + ke_{33}\in A_2$,
we have $A_2 = \Span\{e_{11}+e_{33},e_{12}+ke_{32},e_{13}+ke_{33},e_{22},e_{23}\}$.
For $k\neq 0$, we apply the conjugation with the matrix
$P = \begin{pmatrix}
1 & 0 & 1 \\
0 & 1 & 0 \\
k & 0 & 0 \\
\end{pmatrix}$
and we get $P^{-1}A_2P = D_3\oplus \Span\{e_{12},e_{21}\}$ (as vector spaces),
so $A_2$ up to conjugation contains $D_3$.
For $k = 0$, we get $A_2 = U_3\oplus \Span\{e_{11}+e_{33},e_{22}\}$ (as vector spaces),
the algebra conjugated to the one obtained in the second case.
Let us proceed with this variant of $A_2$.

As $\dim A_1 = 4$ and $A_1\cap A_2 = (0)$,
we have $\dim(A_1\cap (D_3\oplus U_3)) = 1$ (as vector spaces).
Let $t$ be a nonzero vector from $A_1\cap (D_3\oplus U_3)$ (as vector spaces).
Without loss of generality, we may assume that
$t = e_{11}+\gamma e_{22}+\delta e_{33} + d$,
where $\delta\neq1$ and $d = xe_{12}+ye_{13}+ze_{23}\in U_3$.
Due to the arguments stated above, $t^2 = t$, $\delta = 0$ and
we have two possibilities:
$t = e_{11} + xe_{12} + ye_{13}$ or
$t = e_{11} + ye_{13} + e_{22} + ze_{23}$.
In the first case, we apply the conjugation with
the matrix $U = \begin{pmatrix}
1 & -x & -y \\
0 & 1 & 0 \\
0 & 0 & 1 \\
\end{pmatrix}$
and we get $e_{11}\in U^{-1}A_1U$,
$\dim(D_3\cap U^{-1}A_2U) = 2$, we are done.
In the second case we conjugate with the matrix
$V = \begin{pmatrix}
1 & 0 & -y \\
0 & 1 & -z \\
0 & 0 & 1 \\
\end{pmatrix}$, so, $e_{11}+e_{22}\in V^{-1}A_1V$
and $\dim(D_3\cap V^{-1}A_2V) = 2$.
Thus, we finish this variant.

4. $\dim A_1 = 5$, $\dim A_2 = 4$.
Consider the algebra $C = A_1\oplus F1$ (as vector spaces),
it is 6-dimensional unital subalgebra in $M_3(F)$.
Due to the variant 2, we may assume that $C = D_3\oplus U_3$ (as vector spaces).
So, $D_3$ is up to conjugation $R$-invariant.

5. $\dim A_1 = 6$, $\dim A_2 = 3$.
Consider the algebra $C = A_1\oplus F1$ (as vector spaces),
it is 7-dimensional unital subalgebra in $M_3(F)$.
By Lemma~\ref{lem4}(b), we are done.
\end{proof}

\begin{prob}
Does there exist not diagonal RB-operator on $M_n(F)$?
\end{prob}

\begin{prob}
To classify all diagonal RB-operators on $M_n(F)$.
\end{prob}

\subsection{Derivations of nonzero weight}

Given an algebra $A$ and $\lambda\in F$,
a linear operator $d\colon A\rightarrow A$
is called a derivation of weight~$\lambda$~\cite{DifOp}
if $d$ satisfies the identity
\begin{equation}\label{Diff}
d(xy) = d(x)y + xd(y) + \lambda d(x)d(y),\quad x,y \in A.
\end{equation}

Let us call zero operator and $-(1/\lambda)\id$ (if $\lambda\neq0$)
as trivial derivations of weight $\lambda$.

\begin{sta}[\cite{preLie}]\label{sta7}
Given an algebra $A$ and invertible derivation $d$ of weight~$\lambda$ on~$A$,
the operator $d^{-1}$ is an RB-operator of weight~$\lambda$ on~$A$.
\end{sta}

\begin{cor}
There are no nontrivial invertible derivations of nonzero weight on

(a) \cite{BGP} Kaplansky superalgebra $K_3$,

(b) Grassmann algebra, $\mathrm{sl}_2(\mathbb{C})$ and any unital simple algebra.
\end{cor}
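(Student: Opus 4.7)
The plan is to convert the question about derivations to a question about Rota--Baxter operators via Statement~\ref{sta7}, and then invoke the structural results already available for each algebra on the list to force the operator to be trivial.

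Concretely, let $d$ be an invertible derivation of nonzero weight $\lambda$ on $A$, and set $R = d^{-1}$. By Statement~\ref{sta7}, $R$ is an invertible RB-operator of weight $\lambda$ on $A$. Recall that the trivial derivations of weight $\lambda$ are $0$ (which is not invertible) and $-(1/\lambda)\id$ (whose inverse is the trivial RB-operator $-\lambda\id$). So it suffices to show that in each listed case the only invertible RB-operator of weight $\lambda$ on $A$ is $-\lambda\id$.

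For part (a), the cited theorem asserts that every RB-operator of nonzero weight on the Kaplansky superalgebra $K_3$ is splitting. A splitting RB-operator has the form $P(a_1+a_2) = -\lambda a_2$ with $A=A_1\oplus A_2$ (as vector spaces), so $\ker P = A_1$. Invertibility forces $A_1=0$, hence $A_2=A$ and $R = -\lambda\id$, giving the trivial derivation $d=-(1/\lambda)\id$.

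For part (b) we treat the three cases separately. For the Grassmann algebra, Corollary~\ref{cor4} says that every RB-operator of nonzero weight is splitting (after possibly applying $\phi$, which preserves invertibility), and the previous argument again forces $R=-\lambda\id$ or $R=\phi(-\lambda\id)=0$, so the underlying derivation is trivial. For $\mathrm{sl}_2(\mathbb{C})$, Theorem~\ref{thm8} classifies nontrivial RB-operators of nonzero weight up to conjugation as either (i) a splitting operator with $\dim A_1=1$ and $\dim A_2=2$, which has a nontrivial kernel and so is not invertible, or (ii) a triangular-splitting operator with $A_-=\Span\{e\}$, $A_+=\Span\{f\}$, $A_0=\Span\{h\}$, which sends $e\in A_-$ to $0$ and so is again not invertible; hence only $R=0$ and $R=-\lambda\id$ remain, and the corresponding $d$ is trivial. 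Finally, for a unital simple algebra the conclusion is immediate from Lemma~\ref{lem7}(b): every invertible RB-operator of nonzero weight $\lambda$ on such an algebra equals $-\lambda\id$, so $d=-(1/\lambda)\id$.

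No step is genuinely difficult here: the only mild subtlety is keeping track of the $\phi$-action in the Grassmann case (so that Corollary~\ref{cor4} really covers both splitting shapes and both still yield non-invertible operators unless one summand is the whole algebra), and remembering that $-(1/\lambda)\id$ is considered trivial, so ruling out all other invertible RB-operators is exactly what is needed.
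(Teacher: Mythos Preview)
Your proposal is correct and follows exactly the route the paper takes: convert via Statement~\ref{sta7} and then invoke, respectively, the splitting theorem for $K_3$, Corollary~\ref{cor4}, Theorem~\ref{thm8}, and Lemma~\ref{lem7}(b). One small inaccuracy: $\phi$ does \emph{not} preserve invertibility (e.g.\ $\phi(-\lambda\id)=0$), but this is harmless since Corollary~\ref{cor4} asserts that $R$ itself is splitting---the ``up to $\phi$'' clause there applies only to the statement $R(1)=0$, not to the splitting property---so your kernel argument applies directly to $R$ without needing $\phi$ at all.
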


\begin{proof}
(b) It follows from Corollary~\ref{cor4}, Theorem~\ref{thm8} and Lemma~\ref{lem7}(b) respectively.
\end{proof}

\section{Rota---Baxter operators of weight zero}

In \S5, we study RB-operators of weight zero.
In Lemma~\ref{lem9} (\S5.1), we give a list of general constructions
of RB-operators of weight zero which cover most of known examples of RB-operators.

In \S5.2, we state the general property of RB-operators of weight zero on unital algebras.
Given a unital associative (alternative, Jordan) algebraic algebra~$A$
over a field of characteristic zero,
there exists $N$ such that $R^N = 0$
for all RB-operators $R$ of weight zero on $A$ (Theorem~\ref{thm12}, \S5.2).

So, we define a Rota---Baxter index (RB-index) $\rb(A)$ on~$A$ as the minimal
natural number with such nilpotency property.
We prove that $\rb(M_n(F)) = 2n-1$ over a field~$F$
of characteristic zero (Theorem~\ref{thmRBMn}, \S5.4) and study RB-index
for the Grassmann algebra (\S5.3),
unital composition algebras and simple Jordan algebras (\S5.5).

\subsection{Constructions of Rota---Baxter operators}

Below, we say that an algebra $D$ is abelian if its product is trivial, i.e., zero.

\begin{lem}\label{lem9}
Given an algebra $A$, described below linear operator $R$ on $A$
is an RB-operator of weight zero,

(a) If $A = B\oplus C$ (as vector spaces) with abelian $\Imm R$,
$R\colon B\to C$, $R\colon C\to (0)$, $BR(B),R(B)B\subseteq C$. In particular,

(a1) If $A$ is a $\mathbb{Z}_2$-graded algebra $A = A_0\oplus A_1$
with abelian $\Imm R$ and $R\colon A_0\to A_1$, $R\colon A_1\to (0)$;

(a2) If $A$ is a $\mathbb{Z}_2$-graded algebra $A = A_0\oplus A_1$
with abelian odd part and $R\colon A_0\to A_1$, $R\colon A_1\to (0)$.

(b) If $A = B\oplus C$ (as vector spaces),
$R(B)C,CR(B)\subseteq C$, $R\colon B\to C$, $R\colon C\to (0)$, and
$R(b)R(b') = R(R(b)b' + bR(b'))$ for all $b,b'\in B$. In particular,

(b1) If $A$ is a $\mathbb{Z}_2$-graded algebra $A = A_0\oplus A_1$,
$R\colon A_1\to A_0$, $R\colon A_0\to (0)$,
$R(c)R(d) = R(R(c)d + cR(d)) = 0$ for all $c,d\in A_1$;

(b2) If $A$ is a $\mathbb{Z}_2$-graded algebra $A = A_0\oplus A_1$
with abelian even part, $R\colon A_1\to A_0$, $R\colon A_0\to (0)$, and
$R(c)d + cR(d) = 0$ for all $c,d\in A_1$.

(c) If $A = B\oplus C$ (as vector spaces),
$R\colon B\to B$, $R\colon C\to (0)$, $B^2 = (0)$,
$BC,CB\subseteq \ker R$. In particular,

(c1) If $A$ is a $\mathbb{Z}_2$-graded algebra $A = A_0\oplus A_1$
with abelian even part and $R\colon A_0\to A_0$, $R\colon A_1\to (0)$;

(c2) If $A$ is a $\mathbb{Z}_2$-graded algebra $A = A_0\oplus A_1$
with abelian odd part, $R\colon A_0\to (0)$, $R\colon A_1\to A_1$, and
$A_0\cdot\Imm R, \Imm R\cdot A_0\subseteq \ker R$.

(d) If $A$ is a $\mathbb{Z}_3$-graded algebra $A = A_0\oplus A_1\oplus A_2$
with $A_2^2 = (0)$, and $R\colon A_1\to A_2$, $R\colon A_0\oplus A_2\to (0)$.

(e) If $A = B\oplus C\oplus D$ (as vector spaces)
with $C^2 = D^2 = (0)$, $CD,DC\subseteq D$,
$R\colon B\to C$, $R\colon C\to D$, $R\colon D\to (0)$, moreover,
$R(b)b' + bR(b') = 0$, $R(bd) = R(b)d$, $R(db) = dR(b)$
for all $b,b'\in B$, $d\in D$. In particular,

(e1) If $A$ is a $\mathbb{Z}_3$-graded algebra $A = A_0\oplus A_1\oplus A_2$
with $A_0^2 = A_1^2 = (0)$,
$R\colon A_2\to A_0$, $R\colon A_0\to A_1$, $R\colon A_1\to (0)$,
moreover,
$R(a_2)b_2 + a_2R(b_2) = 0$, $R(a_2c_1) = R(a_2)c_1$,
$R(c_1a_2) = c_1R(a_2)$ for all $a_2,b_2\in A_2$, $c_1\in A_1$.

(f) If $A$ is a $\mathbb{Z}_3$-graded algebra $A = A_0\oplus A_1\oplus A_2$
with $A_1A_2, A_2A_1, A_2^2 = (0)$,
$R\colon A_0\to A_1$, $R\colon A_1\to A_2$, $R\colon A_2\to (0)$,
moreover, $R(c)R(d) = R(R(c)d+cR(d))$ for all $c,d\in A_0$.

(g) If $A = B\oplus C\oplus D$ (as vector spaces)
with $CD,DC,D^2\subset D$, abelian $R(C)$,
$R\colon B\to C$, $R\colon C\to D$, $R\colon D\to (0)$, moreover,
$R(x)R(y) = R(R(x)y + xR(y))$
for all $x\in B$, $y\in B\oplus C$ or $x\in B\oplus C$, $y\in B$.

(h) \cite{Gub2017} If $A = B \oplus C$ (as vector spaces), where $B$ is a subalgebra
with an RB-operator~$P$ of weight zero, $BC,CB\subseteq \ker P\oplus C$,
and $R$ is defined as follows:
$R|_{B} \equiv P$, $R|_{C} \equiv 0$.

(i) \cite{preLie}
If $A = A_1\oplus A_2$, $R_i$ are RB-operators on $A_i$, $i=1,2$,
and the linear map $R\colon A \to A$ is defined by the formula
$R(x_1+x_2) = R_1(x_1)+R_2(x_2)$, $x_1\in A_1$, $x_2\in A_2$.
\end{lem}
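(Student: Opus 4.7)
The plan is to verify the weight-zero Rota--Baxter identity
\[
R(x)R(y) = R(R(x)y + xR(y))
\]
directly in each of the nine cases. For arbitrary $x, y \in A$, I would decompose them according to the prescribed vector-space sum, expand both sides, and match them using the listed hypotheses. The overall strategy is always the same: the decomposition forces one or more summands of $x$ and $y$ into $\ker R$, which kills most cross-terms on both sides; the remaining terms are then shown either to lie in $\ker R$ (making the right-hand side zero) or to reproduce $R(x)R(y)$ via the given product-structure conditions.

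In cases (a)--(c) the left-hand side $R(x)R(y)$ vanishes almost immediately: in (a) because both $R(x), R(y) \in \Imm R$ which is abelian; in (c) because $\Imm R \subseteq B$ and $B^2 = 0$; in (b) by the extra hypothesis $R(b)R(b') = R(R(b)b' + bR(b'))$ (when this is not already zero). On the right-hand side the four products split into pieces that lie in $\ker R$ thanks to the inclusions $BR(B), R(B)B \subseteq C$ in (a), and the analogous inclusions in (b) and (c). The six $\mathbb{Z}_2$-graded specializations (a1), (a2), (b1), (b2), (c1), (c2) then follow at once, because in each the required inclusions become automatic from $A_iA_j \subseteq A_{i+j}$ together with the stated triviality of one of the components.

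For the three-summand cases (d)--(g) I would track the component of every product using the grading or the prescribed subspace inclusions. In (d) both sides lie in $A_2$ and vanish by $A_2^2 = 0$. In (e) and (e1) the extra relations $R(b)b' + bR(b') = 0$ and $R(bd) = R(b)d$, $R(db) = dR(b)$ are precisely what is needed so that the right-hand side collapses to $R(x)R(y)$: after discarding the many terms where $R$ kills a summand, the surviving terms on the two sides either cancel or coincide via these identities. Case (f) is similar, using the hypothesis $R(c)R(d) = R(R(c)d + cR(d))$ on $A_0$ together with the vanishing of $A_1A_2$, $A_2A_1$, $A_2^2$; case (g) combines the ideas of (e) and (f).

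Case (h) reduces to the RB-identity for $P$ on $B$: for $x = b + c$, $y = b' + c'$, the right-hand side splits as $R(P(b)b' + bP(b'))$, which equals $P(b)P(b') = R(x)R(y)$ by the RB-identity for $P$, plus $R(P(b)c' + cP(b'))$, which vanishes because $P(b)c' + cP(b') \in BC + CB \subseteq \ker P \oplus C$ and $R$ kills both the $\ker P$-part (where it acts as $P$) and the $C$-part (where it is zero). Case (i) is immediate since $R$ acts as $R_1 \oplus R_2$ on the orthogonal summands $A_1, A_2$, with no interaction. The only real obstacle is bookkeeping, particularly in (e)--(g), where three-way decompositions produce many cross-terms and one must verify case by case that each term either lies in $\ker R$ or cancels via the supplementary identities; no individual step is deep, but the verification must be exhaustive.
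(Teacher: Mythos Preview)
Your proposal is correct and matches the paper's approach: the paper's entire proof is the single word ``Straightforward,'' meaning precisely the direct case-by-case verification of the weight-zero RB-identity that you outline. Your description of how the decompositions force terms into $\ker R$ and how the auxiliary hypotheses handle the surviving cross-terms is exactly the intended bookkeeping.
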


\begin{proof}
Straightforward.
\end{proof}

Let $A$ be an algebra.
Note that conjugation with automorphisms of~$A$ and scalar multiple
does not change the correspondence of an RB-operator of weight zero on $A$ to Lemma~\ref{lem9}.

\begin{exm}
By Example~\ref{exm1}, a linear map $l_e$ in an associative algebra $A$ with $e^2 = 0$
is an RB-operator of weight zero. If $e\in Z(A)$, then $l_e$ corresponds to Lemma~\ref{lem9}(a1)
for $A_0 = (1-e)A$ and $A_1 = eA$.
\end{exm}

\begin{exm}[\cite{Aguiar00-2}]
Given an associative algebra $A$ and an element $e$ such that $e^2 = 0$,
a linear map $l_e r_e$ which acts on $A$ as $x\to exe$ is an RB-operator of weight zero on~$A$.
It corresponds to Lemma~\ref{lem9}(a) for $C = eA + Ae$ and
any $B$ such that $A = B \oplus C$ (as vector spaces).
\end{exm}

\begin{exm}
In \cite{Embedding}, given a~variety $\Var$ of algebras and a pre-$\Var$-algebra~$A$,
an enveloping RB-algebra~$B$ of weight zero and the variety $\Var$ for~$A$ was constructed.
By the construction, $B = A\oplus A'$ (as vector spaces),
where $A'$ is a copy of $A$ as a vector space,
and the RB-operator $R$ was defined as follows: $R(a')= a$, $R(a)=0$, $a\in A$.
By the definition of the operations on~$B$~\cite{Embedding},
the operator~$R$ corresponds to Lemma~\ref{lem9}(a2).
\end{exm}

\begin{exm}
In \cite{Witt}, all homogeneous RB-operators on the Witt algebra $W$
over $\mathbb{C}$ were described (see Example~\ref{exm9} about the definitions).
According to \cite{Witt}, all nonzero homogeneous RB-operators of weight zero on~$W$
with degree $k$ up to the multiplication
on $\alpha\in\mathbb{C}$ are the following:

(W1) $R(L_m) = 0$, $m\neq 0$, $R(L_0) = L_0$;

(W2) $R(L_m) = \delta_{m+2k,0}L_{m+k}$, $k\neq0$, $m\in\mathbb{Z}$;

(W3) $R(L_m) = (\delta_{m+2k,0}+2\delta_{m+3k,0})L_{m+k}$,
 $k\neq0$, $m\in\mathbb{Z}$;

(W4) $R(L_m) = \frac{k}{m+2k}\delta_{m+k,l\mathbb{Z}}L_{m+k}$,
 $l$ doesn't divide $k$, $m\in\mathbb{Z}$.
\end{exm}

Note that the RB-operator (W1) corresponds to Lemma~\ref{lem9}(c),
the RB-operator (W2) --- by Lemma~\ref{lem9}(a), (W3) --- by Lemma~\ref{lem9}(b1) for
$A_1 = \Span\{L_{-3k},L_{-2k}\}$,
$A_0 = \Span\{L_m\mid m\not\in \{-3k,-2k\}\}$.
Finally, the RB-operator (W4) corresponds to Lemma~\ref{lem9}(b) for
$B = \Span\{L_m\mid m+k\in l\mathbb{Z}\}$ and
$C = \Span\{L_m\mid m+k\not\in l\mathbb{Z}\}$.

\begin{exm}
In \cite{preLie},
a classification of RB-operators on all 2- and some
3-dimensional pre-Lie algebras over $\mathbb{C}$ was given.
In particular, for the simple pre-Lie algebra
$S2 = \Span\{ e_1,e_2$,
$e_3 \mid e_1 e_2 = e_2 e_1 = e_3, e_3 e_1 = e_1, e_3 e_2 = -e_2 \}$,
all RB-operators of weight zero on $S2$ are of the form
$R(e_3) = \alpha e_3$, $R(e_1) = R(e_2) = 0$.
Thus, all RB-operators of weight zero on $S2$ correspond to Lemma~\ref{lem9}(c1) for
$A_0 = \Span\{e_3\}$ and $A_1 = \Span\{e_1,e_2\}$.
\end{exm}

\begin{exm}
In \cite{preLie}, the RB-operator $R$ on a class of simple pre-Lie algebras
$I_n = \Span\{ e_1,\ldots,e_n\mid e_n e_n = 2e_n,
e_n e_j = e_j, e_j e_j = e_n,1\leq j\leq n-1\}$ introduced in~\cite{Burde}
for $n\geq3$ was constructed as follows
$R(e_1) = e_n + \dfrac{1}{\sqrt{2-n}}\sum\limits_{i=2}^{n-1}e_i$,
$R(e_i)=0$, $2\leq i\leq n$.
It is easy to see that $R$ corresponds to Lemma~\ref{lem9}(b1) for
$A_0 = \Span\{e_2,\ldots,e_n\}$ and $A_1 = \Span\{e_1\}$.
\end{exm}

\begin{exm}\label{exm18}
In \cite{BGP}, it was proved that all RB-operators of weight zero
on the simple Kaplansky Jordan superalgebra $\mathrm{K}_3$
up to conjugation with $\Aut(\mathrm{K}_3)$ are the following:
$R(e) = R(x) = 0$, $R(y) = ae + bx$ for $a,b\in F$.
Thus, given an RB-operator $R$ of weight zero on $\mathrm{K}_3$,
we have $R^2 = 0$ and $R$ corresponds to Lemma~\ref{lem9}(b1)
for $A_0 = \Span\{e,x\}$ and $A_1 = \Span\{y\}$.
\end{exm}

In \cite{sl2-0}, all RB-operators of weight zero
on $\textrm{sl}_2(\mathbb{C})$ were described as 22 series.
In \cite{Kolesnikov} a~very simple classification of them was given;
up to conjugation with an automorphism of $\textrm{sl}_2(\mathbb{C})$
and scalar multiple all nonzero RB-operators
on $\textrm{sl}_2(\mathbb{C})$ are the following:

(L1) $R(e) = 0$, $R(f) = te-h$, $R(h)=2e$, $t\in \mathbb{C}$;

(L2) $R(e) = 0$, $R(f) = 2te+h$, $R(h)=2e+\frac{1}{t}h$, $t\in \mathbb{C}^*$;

(L3) $R(h) = h$, $R(e) = R(f) = 0$;

(L4) $R(f) = h$, $R(e) = R(h) = 0$;

(L5) $R(f) = e$, $R(e) = R(h) = 0$.

Note that Example~\ref{exm5} up to scalar multiple is (L1) for $t = 0$.

\begin{sta}\label{sta11}
All RB-operators on the simple Lie algebra $\textrm{sl}_2(\mathbb{C})$
correspond to Lemma~\ref{lem9}.
\end{sta}

\begin{proof}
It is easy to check that (L1) corresponds to Lemma~\ref{lem9}(e)
with $A_0 = \Span\{f\}$, $A_1 = \Span\{te-h\}$, $A_2 = \Span\{e\}$.
Cases (L2) and (L3) come from Lemma~\ref{lem9}(c1).
RB-operator $R$ of the type (L4) is constructed by Lemma~\ref{lem9}(h)
for $B = \Span\{f,h\}$ and $C = \Span\{e\}$, where
$R$ on~$B$ corresponds to Lemma~\ref{lem9}(b2).
Finally, (L5) corresponds to Lemma~\ref{lem9}(d).
\end{proof}

\subsection{Unital algebras}

\begin{lem}\label{lem10}
Let $A$ be a unital power-associative algebraic algebra and $R$ be an RB-operator
on $A$ of weight zero. Then $R(1)$ is nilpotent.
\end{lem}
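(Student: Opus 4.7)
The plan is to pass to the commutative associative subalgebra $F[a]$ generated by $a := R(1)$, which is automatically commutative associative in any power-associative algebra, and work inside it. The point is that Lemma~\ref{lem1}(d), $a^n = n! R^n(1)$, has two consequences: every positive power of $a$ lies in $\Imm R$, and $R$ preserves $F[a]$ (explicitly, $R(a^n) = a^{n+1}/(n+1)$, assuming characteristic zero). So the problem reduces to a finite-dimensional commutative associative RB-algebra, which is the natural setting.

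First I would show that the minimal polynomial of $a$ has zero constant term. If $m(x) = x^k + \ldots + \alpha_1 x + \alpha_0$ has $\alpha_0 \neq 0$, then $m(a) = 0$ expresses $1$ as a linear combination of $a, a^2, \ldots, a^k$, hence as an element of $\Imm R$, contradicting Lemma~\ref{lem1}(b).

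Now suppose, for contradiction, that $a$ is not nilpotent. Then $m(x) = x^s h(x)$ with $s \geq 1$, $\deg h \geq 1$, and $h(0) \neq 0$. Since $\gcd(x^s, h(x)) = 1$, B\'ezout gives $u(x) x^s + v(x) h(x) = 1$, and setting $e = u(a) a^s$, $f = v(a) h(a)$ produces orthogonal idempotents in $F[a]$ with $e + f = 1$ (since $ef = u(a)v(a) m(a) = 0$). Both are nonzero: $f = 0$ would put $1 = e \in \Imm R$, while $e = 0$ would force $h(a)$ to be a unit in $F[a]$, impossible because $h(a) \cdot a^s = m(a) = 0$ and $a^s \neq 0$ by minimality of $m$.

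The final step extracts a contradiction from $e \in \Imm R$. Since $e$ is a polynomial in $a$ with no constant term, using $a^i = i R(a^{i-1})$ termwise we can write $e = R(w)$ for an explicit $w \in F[a]$. Working inside the commutative associative subalgebra $F[a]$, the RB-identity collapses cleanly: applying it to $R(w)^2$ and using $e^2 = e$ gives $e = 2 R(we)$, i.e.\ $R(we) = e/2$; then computing $R(w) R(we)$ two ways (directly as $e\cdot e/2 = e/2$, and via the RB-identity as $R(3we/2) = (3/2) R(we) = 3e/4$) forces $e/2 = 3e/4$, hence $e = 0$, the desired contradiction. The main obstacle is really just bookkeeping --- making sure all the elements produced ($e$, $w$, $we$, and their $R$-images) remain inside the commutative associative subalgebra $F[a]$ so that the RB-identity simplifies without associator or commutator clutter, which is exactly what Lemma~\ref{lem1}(d) guarantees.
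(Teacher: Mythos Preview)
Your argument is correct in characteristic zero, but you have not addressed the case $\char F = p > 0$; there Lemma~\ref{lem1}(d) gives $(R(1))^p = p!\,R^p(1) = 0$ immediately, so the gap is trivial to fill, but it should be stated since the lemma carries no characteristic hypothesis.

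Your route is genuinely different from the paper's. The paper works directly with a minimal nontrivial relation $\alpha_1 a + \cdots + \alpha_k a^k = 0$ and applies the operator $(k+1)R - l_a$ to it; using $R(a^m) = a^{m+1}/(m+1)$ this kills the top term and produces a strictly shorter relation, contradicting minimality. No idempotents appear. You instead factor the minimal polynomial, lift a nontrivial idempotent $e \in F[a]$, observe $e \in \Imm R$ because $e$ has no constant term, and then derive $e = 0$ from the RB-identity. That last step is exactly the content of the paper's later Lemma~\ref{lem12} (nonzero idempotents in a commutative associative algebra never lie in $\Imm R$), which you have essentially rediscovered with a slightly different computation. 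The paper's proof is shorter and entirely self-contained; your approach is more structural and isolates a fact of independent interest, at the cost of a bit more setup (B\'ezout, verifying that $R$ preserves $F[a]$ and that the idempotent is nonzero on both sides).
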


\begin{proof}
In the case $\char F = p>0$, we have $(R(1))^p = p!R^p (1) = 0$.

Suppose that $\char F = 0$. Denote $a = R(1)$. Assume that $a$~is not nilpotent.
As $A$ is algebraic, we can consider a minimal $k$ such that
\begin{equation}\label{UnitPowAs}
\alpha_1 a+\alpha_2 a^2+\ldots +\alpha_k a^k = 0
\end{equation}
and not all of $\alpha_i$ are zero. Thus, $k>1$ and $\alpha_k\neq 0$.

By Lemma~\ref{lem1}(d), $a^m = m!R^m(1)$ for all $m\in\mathbb{N}$.
Hence, $R(a^m) = m!R^{m+1}(1) $
$= \frac{a^{m+1}}{m+1}$.
Let us apply the operator $(k+1)R-l_a$ to the left-hand side of~(\ref{UnitPowAs}):
\begin{multline}\label{UnitPowAsSubtr}
\left(\alpha_1 \frac{(k+1)a^2}{2} + \alpha_2\frac{(k+1)a^3}{3}
 + \ldots + \alpha_{k-1}\frac{(k+1)a^k}{k} + \alpha_k a^{k+1}\right) \\
 - (\alpha_1 a^2 + \alpha_2 a^3 + \ldots +\alpha_k a^{k+1}) \\
 = \alpha_1\left(\frac{k+1}{2}-1\right)a^2
 + \alpha_2\left(\frac{k+1}{3}-1\right)a^3 + \ldots
 + \alpha_{k-1}\left(\frac{k+1}{k}-1\right)a^k
 = 0.
\end{multline}
If $\alpha_{k-1} = 0$, then minimality $k$ implies
$\alpha_i = 0$ for all $i=1,\ldots,k-1$ and $a^k = 0$.
If $\alpha_{k-1}\neq0$, then the expressions~(\ref{UnitPowAs})
and~(\ref{UnitPowAsSubtr}) are proportional by a~nonzero scalar.
Hence, we obtain $\alpha_1 = 0$, $\alpha_2 = 0$, \ldots,
$\alpha_{k-1} = 0$, a~contradiction.
\end{proof}

\begin{lem}\label{lem11}
Let $A$ be a unital power-associative algebra over a field
of characteristic zero and let $R$ be an RB-operator
of weight zero on $A$. Suppose that $(R(1))^m = 0$. Then

(a) $R^{2m} = 0$ if $A$ is associative or alternative,

(b) $R^{3m-1} = 0$ if $A$ is Jordan.
\end{lem}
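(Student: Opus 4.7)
The plan is to extract an operator identity from the RB relation by substituting $y=1$. Since $1\cdot x=x$ in a unital algebra, $R(x)R(1)=R(R(x)+xR(1))$ becomes $R(x)\,a=R^{2}(x)+R(xa)$ with $a=R(1)$, or equivalently, in operator form, $R^{2}=L_{a}R-RL_{a}=[L_{a},R]$, where $L_{a}$ denotes left multiplication by $a$. A straightforward induction then gives $[L_{a},R^{k}]=kR^{k+1}$, which rearranges to the fundamental identity
\[
R^{n+1}\;=\;\frac{1}{n!}\operatorname{ad}_{L_{a}}^{\,n}(R)\;=\;\frac{1}{n!}\sum_{k=0}^{n}(-1)^{n-k}\binom{n}{k}\,L_{a}^{k}\,R\,L_{a}^{n-k},
\]
valid in any unital algebra. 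This operator formula is the engine for both parts.

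For (a), in an associative algebra $L_{a}^{k}=L_{a^{k}}$, so each summand applied to $x$ is $a^{k}R(a^{n-k}x)$. Since $a^{m}=0$, a term can be nonzero only when both $k\le m-1$ and $n-k\le m-1$, i.e., $n-m+1\le k\le m-1$. This interval is empty once $n\ge 2m-1$, forcing $R^{2m}=0$. The alternative case reduces to this via Artin's theorem: the subalgebra generated by $a$ and any $x$ is associative, so $L_{a}^{k}(y)=a^{k}\cdot y$ still holds for every $y$ that appears in the expansion, and the same count applies.

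For (b), the same identity holds, but in a Jordan algebra $L_{a}^{k}\ne L_{a^{k}}$ in general. The plan combines two Jordan-specific tools. First, by the Shirshov--Cohn theorem, the subalgebra generated by the two elements $a,x$ is special; inside its associative envelope one has $L_{a}^{k}(y)=2^{-k}\sum_{i}\binom{k}{i}a^{i}\,y\,a^{k-i}$ for any $y$ in that subalgebra. Substituting and reparametrising by $(\alpha,\beta,\gamma,\delta)$ with $\alpha+\beta+\gamma+\delta=n$ converts the fundamental identity into
\[
R^{n+1}(x)\;=\;\frac{1}{2^{\,n}}\!\!\!\sum_{\alpha+\beta+\gamma+\delta=n}(-1)^{\beta+\delta}\,\frac{1}{\alpha!\,\beta!\,\gamma!\,\delta!}\;a^{\alpha}\,R\bigl(a^{\beta}\,x\,a^{\delta}\bigr)\,a^{\gamma}.
\]
Second, the RB identity with $y=1$ yields the recursions $R^{n}(az)=a\,R^{n}(z)-nR^{n+1}(z)$ and $R^{n}(za)=R^{n}(z)\,a-nR^{n+1}(z)$, obtained by induction on $n$ from the two base cases. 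Iterating these recursions pulls the interior powers of $a$ out of the $R$-slot, at the price of producing higher powers of $R$. Combining this with $a^{m}=0$, each surviving monomial is rewritten as a combination of expressions $a^{p}R^{q}(x)a^{r}$ with $p+q+r$ controlled by $n$ and $m$; a careful bookkeeping shows that, when $n+1=3m-1$, every resulting monomial either acquires an explicit factor $a^{m}=0$ or is killed by combinatorial cancellation among the multinomial coefficients $(-1)^{\beta+\delta}n!/(\alpha!\beta!\gamma!\delta!)$, giving $R^{3m-1}=0$.

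The main obstacle is the Jordan case. A naive application of just $L_{a}^{2m-1}=0$ (which itself already requires Shirshov--Cohn plus the special-Jordan computation $L_{a}^{k}(y)=2^{-k}\sum_{i}\binom{k}{i}a^{i}ya^{k-i}$) in the four-factor expansion only yields $R^{4m-2}=0$. To sharpen this to the stated $R^{3m-1}=0$ one must actively use the recursions $R^{n}(az)=aR^{n}(z)-nR^{n+1}(z)$ and $R^{n}(za)=R^{n}(z)a-nR^{n+1}(z)$ to trade interior powers of $a$ for extra applications of $R$, so that monomials with large $\alpha$ or $\gamma$ interact with those having large $\beta$ or $\delta$ and the tight combinatorial cancellation becomes visible; this interleaved counting is the delicate part of the argument.
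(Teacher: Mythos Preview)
Your argument for part~(a) is correct and is in fact cleaner than the paper's. You extract the operator identity $[L_a,R]=R^{2}$ from the RB relation with $y=1$, iterate to get $\operatorname{ad}_{L_a}^{\,n}(R)=n!\,R^{n+1}$, and then kill $R^{2m}$ by noting that in the binomial expansion of $\operatorname{ad}_{L_a}^{\,2m-1}(R)$ every summand $L_a^{k}RL_a^{2m-1-k}$ contains a factor $L_a^{j}=L_{a^{j}}$ with $j\ge m$. The paper instead expands $0=R(x)R(1)R^{m-1}(1)$ by repeated use of the RB identity to write $R^{m+1}(x)$ as a combination of terms $R^{m+1-l}(xR^{l}(1))$, and then iterates that substitution. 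Your route is shorter; the paper's recursion is heavier but adapts directly to the Jordan case.

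Part~(b), however, has a genuine gap. Your four-index formula
\[
R^{n+1}(x)\;=\;\frac{1}{2^{n}}\!\!\sum_{\alpha+\beta+\gamma+\delta=n}(-1)^{\beta+\delta}\,\frac{1}{\alpha!\beta!\gamma!\delta!}\;a^{\alpha}R\bigl(a^{\beta}xa^{\delta}\bigr)a^{\gamma}
\]
applies $R$ to the associative word $a^{\beta}xa^{\delta}$, which lies in the associative envelope $E$ of the special subalgebra $S=\langle a,x\rangle$ but in general not in $S$ (or in $A$) itself; $R$ is only defined on $A$, so the individual summands are meaningless. Only the partial sums $\sum_{\beta+\delta=k}\binom{k}{\beta}a^{\beta}xa^{\delta}$, which are scalar multiples of $L_a^{k}(x)$, lie in $S$. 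For the same reason your two ``recursions'' $R^{n}(az)=aR^{n}(z)-nR^{n+1}(z)$ and $R^{n}(za)=R^{n}(z)a-nR^{n+1}(z)$ cannot both be derived: in the Jordan algebra they collapse to the single identity $R^{n}(a\circ z)=a\circ R^{n}(z)-nR^{n+1}(z)$, and there is no mechanism for pulling an \emph{associative} factor of $a$ past $R$. Your observation that $L_a^{2m-1}=0$ does yield the weaker bound $R^{4m-2}=0$ by exactly your method for~(a), but the sharpening to $3m-1$ that you sketch does not go through as written.

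The paper avoids this trap by never leaving the Jordan algebra until the very last step. It iterates the Jordan analogue of the associative recursion to express $R^{t}(x)$ as a linear combination of terms $R^{t-l}(w_l)$, where each $w_l$ is a \emph{Jordan} word $(\cdots((x\,y^{p_1})y^{p_2})\cdots)y^{p_s}$ with $p_1+\cdots+p_s=l$ and $y=R(1)$; these lie in $A$, so $R$ applies legitimately. Only then is Shirshov's theorem invoked: any Jordan word in $x,y$ with $2m-1$ occurrences of $y$ vanishes, since in the envelope it expands as a sum of associative words of length $2m$, each of which contains a block $y^{\ge m}=0$ by the pigeonhole principle. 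Reaching $y$-degree $2m-1$ through the recursion costs $t=3m-1$.
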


\begin{proof}
(a) Suppose $A$ is an associative algebra.
By Lemma~\ref{lem1}(d), $R^m(1) = 0$.
By standard computations in RB-algebra
(see, e.g., \cite{BokutChen}), we write
\begin{multline}\label{UnitAs}
0 = R(x)R(1)R^{m-1}(1)
  = R^2(x)R^{m-1}(1) + R(xR(1))R^{m-1}(1) \\
  = \chi R^{m+1}(x) + \sum\limits_{l=1}^{m-1}\beta_l R^{m+1-l}(xR^l(1))
\end{multline}
for $x\in A$ and for some $\chi,\beta_l\in\mathbb{N}_{>0}$. Let $t \geq 2m$.
Multiple substitution of~(\ref{UnitAs}) into itself along with the equality
\begin{equation}\label{UnitAs2}
R^k(1)R^l(1) = \frac{1}{k!}(R(1))^k \frac{1}{l!}(R(1))^l
 = \frac{1}{k!l!}(R(1))^{k+l}
 = \frac{(k+l)!}{k!l!}R^{k+l}(1)
\end{equation}
leads to
\begin{multline}\label{UnitAs3}
R^t(x)
 = -\frac{1}{\chi}\sum\limits_{l=1}^{m-1}\beta_l R^{t-l}(xR^l(1))
 = \sum\limits_{2\leq l_1+l_2\leq m-1}\beta_{l_1,l_2} R^{t-l_1-l_2}(xR^{l_1+l_2}(1)) \\
 = \ldots = \sum\limits_{m-1\leq l_1+\ldots+l_{m-1}\leq m-1}\beta_{l_1,\ldots,l_{m-1}}
 R^{t-l_1-\ldots-l_m}(xR^{l_1+\ldots+l_m}(1)) \\
 = \beta_{1,\ldots,1}R^{t-m+1}(xR^{m-1}(1))
 = -\frac{m\beta_1\beta_{1,\ldots,1}}{\chi}R^{t-m}(xR^m(1)) + \ldots = 0.
\end{multline}
For alternative algebras, the proof can be repeated without any changes.
Indeed, any two elements in an alternative algebra generate an associative subalgebra.
Hence, we can avoid bracketings under the action of $R$
in~(\ref{UnitAs})--(\ref{UnitAs3}).

b) In the Jordan case, the computations similar to~(\ref{UnitAs})--(\ref{UnitAs3}) show that for $t\geq m+1$
\begin{equation}\label{UnitJord}
R^t(x)
 = -\frac{1}{\chi}\sum\limits_{l=1}^{t-1}\sum\limits_{p_1+\ldots +p_s = l}
 \beta_{l,p_1,\ldots,p_s} R^{t-l}(\ldots ((xy^{p_1})y^{p_2})\ldots y^{p_s} ),\quad y = R(1),
\end{equation}
where we have some nonassociative words in the alphabet $\{x,y\}$
under the action of $R$ in the right-hand side.

We know that $y^m = 0$. Let us show that any word $w(x,y)$ of the length $2m$ with
a~single occurrence of $x$ and $2m-1$ occurrences of $y$ equals zero in~$A$.
Indeed, by the Shirshov theorem~\cite[p. 71]{Nearly},
the subalgebra~$S$ of~$A$ generated by $\{x,y\}$ is a special one.
It means that there exists an associative enveloping algebra $E$
in which $S$ embeds injectively. Calculating the meaning of $w$ in~$E$,
we get a linear combination $L$ of associative words of the length $2m$
with a single occurrence of $x$ and $2m-1$ occurrences of $y$.
By the Dirichlet's principle, any associative word from $L$
has a subword $y^l$ with $l\geq m$. As $y^m = 0$ in $S$,
we have done. Therefore, to get zero it is enough to execute~\eqref{UnitJord} with $t = 3m-1$.
\end{proof}

\begin{thm}\label{thm12}
Let $A$ be a unital associative (alternative, Jordan)
algebraic algebra over a~field of characteristic zero.
Then there exists $N$ such that $R^N = 0$
for every RB-operator~$R$ of weight zero on $A$.
\end{thm}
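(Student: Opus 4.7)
The plan is to combine the two preceding lemmas. Fix an RB-operator $R$ of weight zero on $A$. Since $A$ is algebraic, Lemma~\ref{lem10} gives that $R(1)$ is nilpotent in $A$; let $m$ denote its nilpotency index, i.e., the minimal positive integer with $(R(1))^m = 0$. Then Lemma~\ref{lem11} directly implies $R^{2m} = 0$ in the associative or alternative case and $R^{3m-1} = 0$ in the Jordan case, so every individual RB-operator of weight zero on $A$ is a nilpotent operator with an explicit bound in terms of $m(R)$.

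To upgrade this to a single $N$ that works uniformly for all $R$ (as required to make the RB-index $\rb(A)$ introduced in the following paragraph a well-defined natural number), one needs a uniform upper bound $m_0$ on the nilpotency indices $m(R)$ as $R$ ranges over all RB-operators of weight zero on $A$. The main obstacle is producing this uniformity from the bare hypothesis that $A$ is algebraic. The natural route is to exploit that the subalgebra $F\langle 1, R(1)\rangle$ is closed under $R$ by the Stirling formulas~(\ref{Stirling1})--(\ref{Stirling2}) together with Lemma~\ref{lem1}(d): any $a = R(1)$ nilpotent of large index would force a linearly independent family $a, a^2, \ldots$ of bounded length dictated by the algebraicity of $a$ over $F$, and the task is to convert this into a bound intrinsic to $A$ rather than to $a$. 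Once such an $m_0$ is produced, taking $N = 2m_0$ (associative/alternative) or $N = 3m_0 - 1$ (Jordan) completes the proof, and this is exactly the mechanism that yields the concrete values of $\rb(A)$ computed in \S5.3--\S5.5 for matrix, Grassmann, composition, and simple Jordan algebras.
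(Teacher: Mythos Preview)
Your first paragraph is exactly the paper's proof: apply Lemma~\ref{lem10} (using that associative, alternative, and Jordan algebras are power-associative) to get $(R(1))^m = 0$, then invoke Lemma~\ref{lem11} to conclude $R^{2m}=0$ (or $R^{3m-1}=0$ in the Jordan case). The paper stops there.

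Your second paragraph is chasing something the paper does not claim to prove. Despite the wording of the theorem, the paper only establishes the per-operator statement $\forall R\,\exists N$: the proof gives a bound depending on the nilpotency index $m=m(R)$ of $R(1)$, and makes no attempt to bound $m(R)$ uniformly over all $R$. This is consistent with the definition of $\rb(A)$ that immediately follows, which explicitly allows $\rb(A)=\infty$ (``If such number is undefined, put $\rb(A)=\infty$''). The concrete finite values computed in \S5.3--\S5.5 come not from a general uniformity argument but from structural features of the particular algebras: in $M_n(F)$ any nilpotent element satisfies $(R(1))^n=0$; in $\mathrm{Gr}_n$ one has $(R(1))^{[(n+1)/2]+1}=0$; in composition and Albert-type algebras the quadratic or cubic minimal equation forces $(R(1))^2=0$ or $(R(1))^3=0$. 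So you should drop the second paragraph; the ``obstacle'' you identify is real for general algebraic algebras, but the paper simply does not surmount it and does not need to for the theorem as intended.
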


\begin{proof}
Each associative, alternative or Jordan algebra is power-associa\-tive,
thus we may apply Lemma~\ref{lem10} for algebras from all of these varieties.
We are done by Lemma~\ref{lem11}.
\end{proof}

Let $A$ be an algebra (or just a ring).
Define Rota---Baxter index (RB-index) $\rb(A)$ of $A$ as follows
$$
\rb(A) = \min\{n\in\mathbb{N}\mid R^n = 0\mbox{ for an RB-operator }R \mbox{ of weight zero on }A\}.
$$
If such number is undefined, put $\rb(A) = \infty$.

\begin{lem}\label{lem12}
Let $A$ be a commutative associative (alternative) algebra and $e$ be a~nonzero idempotent of $A$.
For an RB-operator $R$ on $A$ of weight zero, $e\not \in \Imm(R)$.
\end{lem}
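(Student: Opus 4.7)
The plan is to assume for contradiction that $e \in \Imm(R)$, say $R(x) = e$ for some $x \in A$, and to derive $e = 2e$ by applying the Rota--Baxter identity twice with carefully chosen arguments.

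First I would substitute $y = x$ into the weight-zero RB-identity $R(x)R(y) = R(R(x)y + xR(y))$. Using commutativity, the right-hand side collapses to $2R(xR(x))$, so
\[
e = e^2 = R(x)^2 = 2R(xR(x)) = 2R(ex).
\]
In particular, the element $x' := 2ex$ satisfies $R(x') = 2R(ex) = e$, so $x'$ is another preimage of $e$ under $R$.

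Now I would repeat the same substitution with $x$ replaced by $x'$. This gives
\[
e = R(x')^2 = 2R(x'R(x')) = 2R(x' e) = 2R\bigl(2e(ex)\bigr).
\]
At this point I use associativity (in the alternative case, Artin's theorem guarantees that $e$ and $x$ generate an associative subalgebra, so $e(ex) = e^2 x = ex$) to rewrite the right-hand side as $4R(ex) = 2\cdot 2R(ex) = 2e$. Hence $e = 2e$, so $e = 0$, contradicting the hypothesis that $e$ is a nonzero idempotent.

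The argument is short and there is no real obstacle; the only point requiring care is the step $e(ex) = ex$ in the alternative (non-associative) case, which is immediately handled by Artin's theorem. Note that the contradiction $e = 2e$ holds in arbitrary characteristic, so no assumption on $\char F$ is required. The idea of iterating the RB-identity on the preimage $2ex$ of $e$ (rather than on $x$ alone) is what makes the argument work without having to divide by $2$.
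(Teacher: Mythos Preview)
Your proof is correct and follows essentially the same route as the paper: both derive $e = 2R(ex)$ from the RB-identity, then feed $2ex$ back in (you by setting $x' = 2ex$ and repeating, the paper by writing $x = 2ex + k$ with $k\in\ker R$ and checking the $k$-terms vanish) to obtain $e = 4R(ex) = 2e$. Your formulation is slightly cleaner in that it avoids the auxiliary kernel element, but the underlying idea is identical.
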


\begin{proof}
Suppose that $(0\neq)e\in\Imm (R)$, i.e., $e = R(x)$ for some $x\in A$.
Then $R(x) = R(x)R(x) = 2R(R(x)x) = 2R(ex)$.
On the one hand, $x - 2ex = k\in\ker R$. On the other hand,
\begin{multline*}
R(x) = 2R(ex) = 2R(e(2ex+k)) = 4R(ex)+2R(ek) \\
 = 2R(x) + 2R(x)R(k) - 2R(xR(k)) = 2R(x).
\end{multline*}
Therefore, $e = R(x) = 0$, a contradiction.
\end{proof}

\begin{rem}
The analogous statement is not valid for
associative and Jordan algebras, see Examples~\ref{exm7} and~\ref{exm18}.
\end{rem}

\begin{cor}\label{cor8}
Given a field $F$ of characteristic zero, let $A$ be an algebra over $F$
isomorphic to a direct (not necessary finite) sum of finite extensions
of a field $F$, in particular $A$ can be a sum of copies of $F$.
Then there are no nonzero RB-operators of weight zero on~$A$.
\end{cor}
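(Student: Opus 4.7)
The plan is to combine Lemma~\ref{lem2} with Theorem~\ref{thm2}, and then exploit the fact that in a direct sum of fields every square-zero element must be zero, to conclude that any RB-operator of weight zero on $A$ vanishes on every summand.

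First, I would write $A = \bigoplus_{i\in I} K_i$, where each $K_i$ is a finite (hence algebraic) extension of $F$ and thus a commutative associative algebra without zero divisors. Fixing $i$, the decomposition $A = K_i \oplus K_i'$, with $K_i' = \bigoplus_{j\ne i} K_j$, is a direct sum of two ideals. Let $R$ be an arbitrary RB-operator of weight zero on $A$. By Lemma~\ref{lem2}, $\mathrm{Pr}_{K_i}R$ is an RB-operator of weight zero on~$K_i$, and Theorem~\ref{thm2} applies to~$K_i$ (it is algebraic, power-associative, and has no zero divisors), forcing $\mathrm{Pr}_{K_i}R = 0$ on~$K_i$. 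Thus $R(K_i)\subseteq K_i'$.

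Next, for any $x\in K_i$ the RB-identity with $y=x$ gives $R(x)^2 = 2R(R(x)x)$. Since $R(x)\in K_i'$ and $x\in K_i$ belong to orthogonal ideals in the direct sum, $R(x)\cdot x = 0$, hence $R(x)^2 = 0$. Writing $R(x) = \sum_{j\ne i} r_j$ with $r_j\in K_j$ (only finitely many nonzero), the equality $R(x)^2 = \sum_{j\ne i} r_j^2 = 0$ forces $r_j^2 = 0$ in each $K_j$; since $K_j$ is a field, $r_j = 0$ for all $j$, and therefore $R(x)=0$.

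Finally, because this argument applies to every index $i$, the operator $R$ vanishes on each $K_i$; by linearity and the fact that elements of the direct sum $A$ are finite sums of elements from the $K_i$, we obtain $R\equiv 0$ on~$A$. The only subtlety worth double-checking is the application of Lemma~\ref{lem2} when $I$ is infinite: this is fine since $K_i'$, though infinite-dimensional, is still an ideal of~$A$ complementary to~$K_i$, so the two-summand version of Lemma~\ref{lem2} suffices and no induction on the (possibly infinite) index set is needed.
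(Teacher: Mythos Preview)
Your proof is correct but takes a genuinely different route from the paper's. The paper argues in one stroke via Lemma~\ref{lem12}: if $R\neq 0$, pick any nonzero $a\in\Imm(R)$; since $A$ has no nilpotent elements, the (finite-dimensional, commutative, semisimple) subalgebra of $\Imm(R)$ generated by $a$ contains an idempotent, contradicting Lemma~\ref{lem12}. You instead bypass Lemma~\ref{lem12} entirely, combining Lemma~\ref{lem2} and Theorem~\ref{thm2} to force $R(K_i)\subseteq K_i'$, and then use the RB-identity to get $R(x)^2=0$ and hence $R(x)=0$ componentwise. The paper's argument is shorter and exploits the idempotent obstruction just established; your argument is more structural, making explicit use of the direct-sum decomposition and the earlier Theorem~\ref{thm2}, and has the minor advantage of not needing the idempotent-lifting/existence machinery behind Lemma~\ref{lem12}. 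Both hinge on the same underlying fact that $A$ has no nonzero nilpotent elements.
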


\begin{proof}
Suppose that $R$ is a nontrivial RB-operator of weight zero on~$A$,
then $\Imm(R)$ contains a nonzero element~$a$.
Since $A$ has no nilpotent elements, the subalgebra $S$ of $\Imm(R)$ generated by~$a$
is a semisimple finite-dimensional commutative algebra and $S$~contains an idempotent~\cite[Thm. 1.3.2]{Herstein}.
It is a contradiction to Lemma~\ref{lem12}.
\end{proof}

\begin{rem}
Let us explain why there are only trivial RB-operators of weight zero on some semigroup
algebras of order~2 and~3 in~\cite{Semigroup}.

We recall some facts about semigroups due to~\cite{SemigroupMonograph}.
A set $A$ with a binary operation~$\cdot$ is called semigroup if the product $\cdot$ is associative.
An element $a\in S$ is called regular if $axa = a$ for some $x\in S$.
By an inverse semigroup we mean a semigroup in which every element is regular
and any two idempotents of $S$ commute with each other.
The Mashke's Theorem for semigroups~\cite[Thm. 5.26]{SemigroupMonograph} says:
the semigroup algebra $F[S]$ of a finite inverse semigroup $S$ is semisimple
if and only if the characteristic of $F$ is zero or a prime not dividing
the order of any subgroup of $S$.

In~\cite{Semigroup}, there are only trivial RB-operators of weight zero on semigroup algebras
of order~2 and~3 exactly for commutative inverse semigroups.
In this case, by the Mashke's Theorem for semigroups, $F[S]$ is a commutative semisimple algebra.
By the Wedderburn---Artin theorem~\cite[Thm. 2.1.7]{Herstein},
$F[S]$ is a direct sum of finite extensions of a field $F$, and all RB-operators of weight zero on $F[S]$
are trivial by Corollary~\ref{cor8}. In particular, it implies the affirmative answer
on the question from~\cite{Semigroup}: whether all RB-operators of weight zero are trivial
on $F[G]$ over a field $F$ of characteristic zero for any (prime order) cyclic group $G$.
Moreover, all RB-operators of weight zero are trivial on $F[G]$ for every finite abelian group $G$
if $\char F = 0$ or $\char F>|G|$.
\end{rem}

Given an associative algebra $A$, let us denote its Jacobson radical by $\Rad(A)$.
In an associative Artinian algebra $A$,
$\Rad(A)$ is the largest nilpotent ideal in~$A$ \cite[Thm. 1.3.1]{Herstein}.
The same is true for the radical $\Rad(A)$
of an alternative Artinian algebra~$A$ \cite[p. 250]{Nearly}.

\begin{thm}\label{thm13}
Let $A$ be a commutative associative (alternative)
finite-dimen\-sional algebra over a field of characteristic zero
and $R$ be an RB-operator on~$A$. Then

(a) $\Imm R\subseteq \Rad(A)$,

(b) $\rb(A)\leq 2m-1$ if $A$ is unital,
where $m$ is the index of nilpotency of $\Rad(A)$.
\end{thm}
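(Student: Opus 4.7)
The plan is to establish (a) by proving that $\Imm R$ is a nil subalgebra, and then to use (a) together with an explicit recursion and a nested induction to obtain (b).

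For (a), observe that $\Imm R$ is a subalgebra since $R(x)R(y)=R(R(x)y+xR(y))\in\Imm R$. I would show every $b\in\Imm R$ is nilpotent. Suppose not: let $\mu(t)=t^k q(t)$ be its minimal polynomial with $q(0)\neq 0$. If $k\geq 1$, the Chinese Remainder Theorem provides a polynomial $s(t)\in tF[t]$ with $s(b)^2=s(b)\neq 0$, giving a nonzero idempotent in the subalgebra generated by $b$ (contained in $\Imm R$). If $k=0$, then $b$ is invertible in the commutative subalgebra it generates, whose local unit is a nonzero idempotent in $\Imm R$. Either way, Lemma~\ref{lem12} is violated. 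Since nilpotent elements of a commutative associative finite-dimensional algebra form $\Rad(A)$, this gives $\Imm R\subseteq \Rad(A)$. (In characteristic zero a commutative alternative algebra is automatically associative---from alternativity the associator $[x,y,z]$ is alternating, while commutativity gives $[x,y,z]=[y,x,z]=-[x,y,z]$, forcing $[x,y,z]=0$---so the alternative case reduces to the associative one.)

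For (b), put $a=R(1)$. By (a), $a$ and all $R(x)$ lie in $\Rad(A)$, hence $a^m=0$ and $R(x)\,a^{n-1}\in\Rad(A)^n$ for every $x$ and $n\geq 1$. Lemma~\ref{lem1}(d) together with power-associativity gives $R(a^k)=a^{k+1}/(k+1)$, and the RB-identity applied with $y=a^k$ rearranges to
\[
R\bigl(R(x)\,a^k\bigr)=\frac{1}{k+1}\bigl(R(x)\,a^{k+1}-R(xa^{k+1})\bigr).
\]
Combining this with the base case $R^2(x)=R(x)\,a-R(xa)$ (the RB-identity with $y=1$), I plan to establish by induction on $n$ the closed-form recursion
\[
R^n(x)=\frac{1}{(n-1)!}\,R(x)\,a^{n-1}-\sum_{j=1}^{n-1}\frac{1}{j!}\,R^{n-j}(xa^j). \qquad (\star)
\]

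With $(\star)$ in hand, I would run a downward induction on $k\in\{m,m-1,\dots,0\}$, with the convention $\Rad(A)^0:=A$, to prove that $R^n(x)=0$ for every $x\in\Rad(A)^k$ and every $n$ satisfying $n+k\geq 2m-1$. The base $k=m$ is trivial. For the inductive step: the leading term $R(x)\,a^{n-1}$ of $(\star)$ lies in $\Rad(A)^n\subseteq\Rad(A)^m=0$, since $n\geq 2m-1-k\geq m$; each remaining summand $R^{n-j}(xa^j)$ has $xa^j\in\Rad(A)^{k+j}$, which is either zero (if $k+j\geq m$) or falls under the induction hypothesis at level $k'=k+j>k$, because $(n-j)+(k+j)=n+k\geq 2m-1$. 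Taking $k=0$, $n=2m-1$ then yields $R^{2m-1}=0$, so $\rb(A)\leq 2m-1$. The main obstacle is verifying the exact coefficients in $(\star)$ by induction on $n$ and wiring the downward induction on $k$ so that the index arithmetic closes at every admissible pair $(k,n)$---in particular confirming that the first term of $(\star)$ genuinely lies in $\Rad(A)^m$ whenever $n+k\geq 2m-1$.
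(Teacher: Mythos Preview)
Your proof is correct. Part~(a) is the paper's argument with the idempotent construction spelled out: the paper simply invokes the classical fact that a non-nilpotent finite-dimensional (alternative) algebra contains an idempotent and applies Lemma~\ref{lem12}.

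For part~(b) the underlying mechanism is the same---iterate the RB-identity using $R(a^k)=a^{k+1}/(k+1)$---but the organization differs. The paper's proof is a one-line reduction to Lemma~\ref{lem11}(a): since $R(x)\in\Rad(A)$ by~(a), one may start the iteration from $R(x)(R(1))^{m-1}=0$ rather than from $R(x)(R(1))^{m}=0$, which shaves one step off the recursion in~\eqref{UnitAs}--\eqref{UnitAs3} and yields $R^{2m-1}=0$ in place of $R^{2m}=0$. Your route---the closed-form identity $(\star)$ followed by downward induction along the filtration $\Rad(A)^k$---unpacks the same recursion explicitly; it makes visible exactly where each term dies (the leading term lands in $\Rad(A)^n\subseteq\Rad(A)^m$, the tail terms fall under the inductive hypothesis because $(n-j)+(k+j)=n+k$ is preserved) and why the improved bound $2m-1$ rather than $2m$ is the right one.

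One small correction to your parenthetical on the alternative case: commutativity alone does \emph{not} force $(x,y,z)=(y,x,z)$. The valid argument is that in any commutative algebra $(x,y,z)+(y,z,x)+(z,x,y)=0$ identically, while in an alternative algebra the associator is alternating so these three terms are equal; hence $3(x,y,z)=0$, and characteristic zero (indeed $\neq 3$) gives associativity. Your conclusion stands, only the justification needed adjusting.
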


\begin{proof}
(a) Suppose $\Imm R\not\subseteq \Rad(A)$,
then $\Imm R$ is not nilpotent algebra.
In this case, $\Imm R$ has an idempotent~\cite[Thm. 1.3.2]{Herstein}
(in alternative case, apply \cite[p.250]{Nearly}
and the standard lifting of an idempotent),
a~contradiction to Lemma~\ref{lem12}. So $\Imm R\subseteq \Rad(A)$.

(b) By (a), $R(1)\in \Rad(A)$.
The remaining proof is analogous to the proof of Lemma~\ref{lem11}(a)
with one change: by (a), we can write down~(\ref{UnitAs})
with $R(x)(R(1))^{m-1}$ instead of $R(x)(R(1))^m$.
Thus, it is enough to consider the power $t\geq 2m-1$.
\end{proof}

It is well-known that in characteristic zero,
the solvable radical of a finite-dimen\-sional Lie algebra
is preserved by any derivation \cite[p. 51]{Jacobson-Lie}
as well as the locally nilpotent radical and the nil-radical of
an (not necessary Lie or associative) algebra \cite{Slin'ko}.

\begin{cor}
Let $A$ be a commutative associative (alternative)
finite-di\-mensional algebra over a field of characteristic zero
and let $R$ be an RB-operator on $A$. Then $\Rad(A)$ is $R$-invariant.
\end{cor}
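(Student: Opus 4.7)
The plan is essentially immediate: the corollary is a one-line consequence of part~(a) of the preceding Theorem~\ref{thm13}, so no new argument is really needed. Concretely, I would write: since $\Imm R\subseteq \Rad(A)$ by Theorem~\ref{thm13}(a), we have
\[
R(\Rad(A))\subseteq R(A)=\Imm R\subseteq \Rad(A),
\]
which by definition says $\Rad(A)$ is $R$-invariant. No additional finite-dimensionality, characteristic, or commutativity hypothesis is invoked beyond what already enters Theorem~\ref{thm13}, so the commutative associative (or alternative) case simply inherits whatever was established there.

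In other words, there is no real obstacle to overcome here; the content of the corollary has already been done in Theorem~\ref{thm13}(a), and the point of recording it as a separate statement is mainly to emphasize the analogy with the remark immediately preceding it — namely that, just as the various radicals of a finite-dimensional algebra are preserved by derivations in characteristic zero, the Jacobson radical of a commutative associative (or alternative) finite-dimensional algebra is preserved by any Rota---Baxter operator of weight zero.

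If one wanted to make the exposition more self-contained, I would briefly recall why the passage from ``$\Imm R\subseteq \Rad(A)$'' to ``$R(\Rad(A))\subseteq \Rad(A)$'' is formal: it uses only that $\Rad(A)\subseteq A$, so its image under $R$ sits in $\Imm R$, which by Theorem~\ref{thm13}(a) is already contained in $\Rad(A)$. The only subtlety worth flagging is that Theorem~\ref{thm13}(a) is genuinely a weight-zero statement (its proof ultimately relies on Lemma~\ref{lem12}, which uses the weight-zero Rota---Baxter identity via $R(x)R(x)=2R(R(x)x)$), so the corollary should be read as applying to RB-operators of weight zero, consistent with the surrounding Section~5.
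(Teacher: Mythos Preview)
Your proposal is correct and matches the paper's approach exactly: the corollary is stated without proof in the paper, so it is meant to follow immediately from Theorem~\ref{thm13}(a) via the observation $R(\Rad(A))\subseteq \Imm R\subseteq \Rad(A)$, which is precisely what you wrote. Your remark that the result is implicitly a weight-zero statement (inherited from the ambient Section~5 and the reliance on Lemma~\ref{lem12}) is a helpful clarification.
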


\begin{thm}\label{thm14}
Let $A$ be a unital associative algebra equal $F1\oplus N$
(as vector spaces), where $N$ is nilpotent ideal of index~$m$,
$\char F = 0$, and let $R$ be an RB-operator of weight zero on $A$. Then we have

(a) $\Imm R\subseteq N$,

(b) $\rb(A)\leq 2m-1$.
\end{thm}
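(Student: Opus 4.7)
The plan is to deduce (a) from a clever use of the weight-zero RB-identity applied to $R(y)^2$, and then to derive (b) by sharpening the proof of Lemma~\ref{lem11}(a) using the stronger vanishing provided by~(a).

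For (a), by Lemma~\ref{lem10} the element $R(1)$ is nilpotent; since every element of $F\cdot 1\oplus N$ with nonzero scalar part is a unit, this forces $R(1)\in N$. Suppose for contradiction that $\Imm R\not\subseteq N$, so that $R(y)=\alpha\cdot 1+n$ for some $y\in A$ with $\alpha\neq 0$ and $n\in N$. Expanding $R(y)^2=R(R(y)y+yR(y))$ gives $R(ny+yn)=n^2-\alpha^2$, an image whose scalar part is $-\alpha^2\neq 0$ and whose nilpotent part is $n^2$. Iterating this operation produces elements $R(y_k)=-\alpha^{2^k}\cdot 1+n^{2^k}$ in $\Imm R$; since $n$ is nilpotent, for $k$ large enough we obtain $R(z)=c\cdot 1$ with $c=-\alpha^{2^k}\neq 0$. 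The RB-identity then yields $c^2=R(z)^2=R(2cz)=2c^2$, forcing $c=0$ in characteristic zero, a contradiction.

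For (b), part (a) gives $R(x)\in N$ for every $x\in A$. Combined with $R^\ell(1)=(R(1))^\ell/\ell!\in N^\ell$ from Lemma~\ref{lem1}(d), this yields the sharper vanishing $R(x)R^{m-1}(1)\in N\cdot N^{m-1}=N^m=0$. A routine induction on $k$ using the RB-identity shows
$$
R(x)R^k(1)=R^{k+1}(x)+\sum_{j=1}^{k}R^{k+1-j}\bigl(x\,R^j(1)\bigr).
$$
Setting $k=m-1$ and using vanishing of the left-hand side produces the key identity
$$
R^m(x)=-\sum_{j=1}^{m-1}R^{m-j}\bigl(x\,R^j(1)\bigr)\quad\text{for all }x\in A,
$$
an improvement by one power of $R$ over the analogous identity obtained in Lemma~\ref{lem11}(a).

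Iterating this identity expresses $R^t(x)$ as a linear combination of terms $R^{t-s}\bigl(x\,R^s(1)\bigr)$ with $s=l_1+\cdots+l_k$, each $l_i\in\{1,\ldots,m-1\}$, products of $R^j(1)$-factors being absorbed via $R^j(1)R^{j'}(1)=\binom{j+j'}{j}R^{j+j'}(1)$. The iteration continues while $t-s\geq m$, and any term with $s\geq m$ vanishes because $R^s(1)=0$. For $t=2m-1$ the termination condition $t-s<m$ is equivalent to $s\geq m$, so every surviving term is zero and $R^{2m-1}=0$. The main technical obstacle is this combinatorial bookkeeping, namely verifying that along every branch of substitutions the accumulated exponent $s$ reaches $m$ before the outer exponent $t-s$ drops below $m$; the choice $t=2m-1$ is exactly the borderline case where the two conditions coincide. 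Part~(a), by contrast, reduces to a short calculation exploiting the fact that the squaring operation $R(y)\mapsto R(ny+yn)$ doubles the scalar exponent while squaring the nilpotent tail, so nilpotence of $N$ eventually strips the latter away and produces a forbidden nonzero scalar in~$\Imm R$.
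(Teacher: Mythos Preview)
Your proof is correct for both parts.

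For part~(a), your route differs from the paper's. The paper observes that $\Imm R$ is a subalgebra of~$A$ and expands $(R(y)-\alpha\cdot 1)^m = n^m = 0$ binomially, which exhibits $\alpha^m\cdot 1$ as an element of $\Imm R$; then Lemma~\ref{lem1}(b) forces $\alpha = 0$ in one stroke. Your argument instead iterates the RB-identity to manufacture the explicit sequence $R(y_k) = -\alpha^{2^k}\cdot 1 + n^{2^k}$, stripping the nilpotent tail and reaching a nonzero scalar in $\Imm R$ directly. Both reach the same contradiction (a nonzero scalar multiple of~$1$ in the image); the paper's version is shorter and uses the subalgebra property of $\Imm R$ globally, while yours is more hands-on and never appeals to that structural fact.

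For part~(b), your argument is essentially the same as the paper's, which simply says the proof is analogous to that of Theorem~\ref{thm13} (itself a one-step sharpening of Lemma~\ref{lem11}(a)). Your explicit identity $R(x)R^k(1)=R^{k+1}(x)+\sum_{j=1}^k R^{k+1-j}(xR^j(1))$, proved by a clean induction, makes the bookkeeping more transparent than the paper's formulation with unspecified positive integer coefficients; the iteration and the termination analysis at $t=2m-1$ are exactly the intended mechanism.
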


\begin{proof}
(a) For $x = \alpha\cdot1+n\in \Imm R$, $n\in N$, the following is true:
$(x-\alpha\cdot1)^m = 0$. As $\Imm R$ is a subalgebra in~$A$,
$\alpha^m\cdot1\in \Imm R$. By Lemma~\ref{lem1}(b), $\alpha = 0$.

(b) Analogous to the proof of Theorem~\ref{thm13}.
\end{proof}

\begin{lem}\label{lem13}
Let $A$ be an algebra and let $e$ be an idempotent of $A$.
Then for an RB-operator~$R$ on $A$ of weight zero,
$e\not \in \Imm (R^k)\cap \ker R$ for $k\geq2$.
\end{lem}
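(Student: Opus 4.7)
The plan is to exploit the hypothesis $R(e)=0$ together with the fact that, for $k\geq 2$, any preimage $y$ of $e$ under $R$ lies itself in the image of $R$. This second property fails for $k=1$, which is precisely why the conclusion can fail there (cf.\ Examples~\ref{exm7},~\ref{exm18}).

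Fix $x\in A$ with $R^k(x)=e$, and set $y=R^{k-1}(x)$, so $R(y)=e$. Since $k\geq 2$, we may also write $y=R(z)$ with $z=R^{k-2}(x)\in A$, hence $y\in\Imm R$.

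The key observation is that $R(e)=0$ together with the RB-identity of weight zero immediately annihilates one-sided products with elements of~$\Imm R$. Indeed, for every $v\in A$,
\begin{equation*}
0 = R(e)\,R(v) = R\bigl(R(e)v+eR(v)\bigr) = R\bigl(eR(v)\bigr),
\end{equation*}
and symmetrically $R\bigl(R(v)e\bigr)=0$. Thus $R(ew)=R(we)=0$ for every $w\in\Imm R$; in particular $R(ey)=R(ye)=0$.

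On the other hand, writing $e=e^2=R(y)R(y)$ and applying the RB-identity gives
\begin{equation*}
e = R(y)R(y) = R\bigl(R(y)y+yR(y)\bigr) = R(ey+ye),
\end{equation*}
whose right-hand side is $R(ey)+R(ye)=0$ by the previous paragraph. Hence $e=0$, which is the desired contradiction. There is no substantive obstacle: the whole point is noticing that the hypotheses $R(e)=0$ and $k\geq 2$ play complementary roles, one supplying an annihilation rule and the other ensuring that $y$ lies in $\Imm R$ so that rule applies to $ey$ and $ye$.
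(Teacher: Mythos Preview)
Your proof is correct and follows essentially the same route as the paper: both reduce to the case $k=2$ by writing $e=R(y)$ with $y\in\Imm R$, then compute $e=R(y)R(y)=R(ey+ye)$ and argue this vanishes because $e\in\ker R$ and $y\in\Imm R$. The only cosmetic difference is that you spell out the identity $R(ew)=R(we)=0$ for $w\in\Imm R$ from the RB-relation, whereas the paper invokes it as the known fact that $\ker R$ is an $(\Imm R)$-module.
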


\begin{proof}
Suppose that $e\in\Imm (R^2)\cap \ker R$, i.e.,
$e = R(x)$ for some $x\in \Imm R$.
Then $e = R(x) = R(x)R(x) = R(R(x)x+xR(x)) = R(ex+xe) = 0$,
as $\ker R$ is an $(\Imm R)$-module. It is a contradiction.
The proof for $k>2$ is analogous.
\end{proof}

\begin{cor}\label{cor10}
Let $A$ be an associative (alternative) finite-dimensional algebra
and let $R$ be an RB-operator of weight zero on~$A$. Then
$\Imm (R^k)\cap \ker R$ is a nilpotent ideal in $\Imm (R^k)$ for $k\geq2$.
\end{cor}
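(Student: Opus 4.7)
The plan is to prove Corollary~\ref{cor10} in two stages: first establish that $I := \Imm(R^k) \cap \ker R$ is an ideal in $\Imm(R^k)$, then deduce nilpotency via structure theory using Lemma~\ref{lem13}.

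For the ideal structure, two preliminary observations suffice. First, $\Imm(R^k)$ is a subalgebra of $A$ for every $k \geq 1$: the case $k = 1$ is immediate from the RB identity $R(a)R(b) = R(R(a)b + aR(b)) \in \Imm R$, and since the restriction of $R$ to $\Imm R$ is again an RB-operator of weight zero on the subalgebra $\Imm R$, induction gives $\Imm(R^k) = R(\Imm(R^{k-1}))$ as a subalgebra of $\Imm(R^{k-1}) \subseteq A$. Second, $\ker R$ is a two-sided $(\Imm R)$-module: substituting $b \in \ker R$ into the RB identity yields $R(R(a)b) = R(a)R(b) = 0$, so $R(a)b \in \ker R$, and symmetrically $bR(a) \in \ker R$. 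Combining these observations for $y \in I$ and $z \in \Imm(R^k) \subseteq \Imm R$, both products $yz$ and $zy$ lie in $\Imm(R^k)$ (subalgebra property) and in $\ker R$ (module property), hence in $I$.

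For nilpotency, Lemma~\ref{lem13} says that $I$ contains no nonzero idempotent, since any idempotent of $I$ is in particular an idempotent of $A$ lying in $\Imm(R^k) \cap \ker R$. Since $A$ is finite-dimensional, so is $I$, and I then invoke the structure theory of finite-dimensional associative (respectively alternative) algebras: the radical $\Rad(I)$ is the largest nilpotent ideal, and the quotient $I/\Rad(I)$ is semisimple. If $I/\Rad(I) \neq 0$, then being semisimple it contains a nonzero idempotent, which lifts modulo the nilpotent ideal $\Rad(I)$ to a nonzero idempotent of $I$, contradicting Lemma~\ref{lem13}. Hence $I = \Rad(I)$ is nilpotent. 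The only delicate point is the alternative case, where the Wedderburn-type decomposition and the lifting of idempotents take a slightly different form; these are the same facts already invoked via \cite{Nearly} in the proof of Theorem~\ref{thm13}, so no new input is needed.
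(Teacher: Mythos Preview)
Your proof is correct and follows essentially the same route as the paper: the ideal property via the fact that $\ker R$ is an $(\Imm R)$-module (which the paper already uses in the proof of Lemma~\ref{lem13}), and nilpotency via Lemma~\ref{lem13} combined with the structure-theoretic fact that a finite-dimensional associative or alternative algebra without idempotents is nilpotent. Your write-up is in fact more explicit than the paper's terse proof, particularly in spelling out the ideal property and the inductive reason why $\Imm(R^k)$ is a subalgebra.
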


\begin{proof}
It follows from Lemma~\ref{lem13} and the stated above property of alternative algebras:
every finite-dimensional alternative not nilpotent algebra contains an idempotent~\cite{Nearly},
as $\Imm (R^k)\cap \ker R\subset \Rad(A)$ and $\Rad(A)$ is nilpotent.
\end{proof}

\subsection{Grassmann algebra}

Recall that $\mathrm{Gr}_n$ denotes the Grassmann algebra of the space
$V = \Span\{e_1,\ldots,e_n\}$ and let $A_0(n)$ be its subalgebra generated by $V$.

\begin{lem}\label{lem14}
Let $R$ be an RB-operator of weight zero on $\mathrm{Gr}_n$ and let $\char F = 0$.
Then we have
(a) $R(\mathrm{Gr}_n)\subseteq A_0(n)$;
(b) $R(e_1\wedge e_2\wedge\ldots \wedge e_n) = 0$;
(c) $(R(1))^{[(n+1)/2]+1} = 0$;
(d)~$\rb(\mathrm{Gr}_n)\leq 2\big[\frac{n+1}{2}\big] + 2$.
\end{lem}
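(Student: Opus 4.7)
Part (a) is a direct application of Theorem~\ref{thm14}(a): the Grassmann algebra decomposes as $\mathrm{Gr}_n = F\cdot 1 \oplus A_0(n)$, and $A_0(n)$ is a two-sided nilpotent ideal of index $n+1$ since any product of more than $n$ generators vanishes. Applying Theorem~\ref{thm14}(a) to the associative algebra $\mathrm{Gr}_n$ immediately yields $\Imm R \subseteq A_0(n)$.

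For (c), put $b := R(1)$; by (a) it lies in $A_0(n)$, so I may decompose $b = b_{\bar{0}} + b_{\bar{1}}$ along the $\mathbb{Z}_2$-grading of $\mathrm{Gr}_n$. Graded-commutativity in characteristic zero forces $b_{\bar{1}}^2 = 0$: each odd homogeneous summand squares to zero, and distinct odd summands anticommute pairwise. Since $b_{\bar{0}}$ is central, a binomial expansion gives
\[
b^k = b_{\bar{0}}^k + k\, b_{\bar{0}}^{k-1} b_{\bar{1}}.
\]
The minimal degrees of $b_{\bar 0}$ and $b_{\bar 1}$ are at least $2$ and $1$ respectively, so both summands lie in the subspace spanned by monomials of degree $\geq 2k-1$, which vanishes once $2k-1 > n$, i.e., for $k \geq [(n+1)/2] + 1$. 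Part (d) is then immediate: Lemma~\ref{lem1}(d) turns $(R(1))^m = 0$ into $R^m(1) = 0$ for $m = [(n+1)/2]+1$, and Lemma~\ref{lem11}(a) propagates this to $R^{2m} = 0$ on the whole algebra, so $\rb(\mathrm{Gr}_n) \leq 2[(n+1)/2] + 2$.

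The core obstacle is (b). With $a := R(\omega)$ and $b := R(1)$, the decisive fact is that $\omega$ is central in $\mathrm{Gr}_n$ and annihilates $A_0(n)$ on both sides, so by (a) every occurrence of $\omega R(y)$ or $R(y)\omega$ in the RB-identity drops out. Substituting $x = \omega$, or $y = \omega$, or the pairs $(x,y) = (1,\omega)$ and $(\omega,1)$, collapses the identity to the clean relations $a^2 = 0$, $R(ay) = a R(y)$ and $R(ya) = R(y) a$ for every $y$, and $ab = ba = R(a)$. The target is then to show $a \in F\omega$: once $a = \lambda \omega$, equating $R(a) = \lambda a = \lambda^2 \omega$ with $R(a) = ab = \lambda \omega \cdot b = 0$ (since $\omega b = 0$) forces $\lambda = 0$.

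To establish $a \in F\omega$, I would decompose $a = a_1 + \cdots + a_n$ by homogeneous degree and rule out each $a_k$ with $k < n$ using the identities $R(e_I)\, a = R(e_I\, a)$ obtained from $x = e_I$, $y = \omega$ for multi-indices $I$. The right-hand side is explicitly computable since $e_I \cdot a_j$ vanishes for $|I| + j > n$ and otherwise expands in basis monomials; in particular, when $|I| + k = n$ and $a_k \neq 0$ the term $e_I a_k$ contributes a nonzero multiple of $\omega$. Running this identity over multi-indices of complementary sizes and comparing components degree-by-degree, together with the constraint $a^2 = 0$, should propagate vanishing of the homogeneous components of $a$ inductively (as I can verify directly in the $\mathrm{Gr}_2$ case: applying it for $|I| = 1$ kills first the top-odd coefficient of $a_1$, then the remaining coefficient, reducing $a$ to $F\omega$). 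This combinatorial bookkeeping, exploiting the anticommutation of the $e_i$, is where I expect the main technical difficulty to lie.
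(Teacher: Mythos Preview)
Your arguments for (a), (c), and (d) are correct and essentially match the paper's. For (c) in particular, your $\mathbb{Z}_2$-decomposition $b = b_{\bar 0} + b_{\bar 1}$ with $b_{\bar 1}^2 = 0$ is exactly the paper's observation that in a symmetrized product of basis monomials at most one factor can have odd length; the degree count $2k-1$ and the conclusion are the same.

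The gap is in (b). You have not proved $a := R(\omega) \in F\omega$; you only outline an inductive scheme based on the identities $R(e_I)\,a = R(e_I\,a)$ and explicitly flag the ``combinatorial bookkeeping'' as an unresolved difficulty. That scheme is not obviously going to close: the left-hand side $R(e_I)\,a$ involves the unknown values $R(e_I)$, so comparing homogeneous components does not give you direct equations on the coefficients of $a$ alone. Your verification in $\mathrm{Gr}_2$ is not evidence that the induction survives to general~$n$.

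The paper bypasses all of this with a one-line argument that you missed. Since $a \in A_0(n)$ by (a), if $a \neq 0$ choose any $x \in \mathrm{Gr}_n$ with $a x = \omega$ (pick a minimal-degree basis monomial appearing in $a$ and let $x$ be the complementary monomial, rescaled). Then the RB-identity with this $x$ gives
\[
a\,R(x) = R(\omega)R(x) = R\bigl(R(\omega)x + \omega R(x)\bigr) = R(ax + 0) = R(\omega) = a,
\]
using $\omega R(x) = 0$ because $R(x)\in A_0(n)$. But $R(x)\in A_0(n)$ is nilpotent, so iterating $a = aR(x) = aR(x)^2 = \cdots$ forces $a = 0$. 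No decomposition of $a$, no induction, no use of $a^2 = 0$ or of $R(a) = ab$ is needed.
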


\begin{proof}
(a) It follows from Theorem~\ref{thm14}(a).

(b) If $R(e_1\wedge e_2\wedge\ldots\wedge e_n)\neq 0$,
then there exists $x\in\mathrm{Gr}_n$ such that
$R(e_1\wedge\ldots\wedge e_n)x = e_1\wedge\ldots\wedge e_n$. Further,
$$
R(e_1\wedge\ldots\wedge e_n)R(x)
  = R( R(e_1\wedge\ldots\wedge e_n)x + e_1\wedge\ldots\wedge e_n R(x) )
  = R(e_1\wedge\ldots\wedge e_n),
$$
a contradiction, as $R(x)\in A_0(n)$.

(c) The linear basis of $A_0(n)$ consists of the vectors
$e_\alpha = e_{\alpha_1}\wedge e_{\alpha_2}\wedge\ldots\wedge e_{\alpha_s}$
for $\alpha = \{\alpha_1,\alpha_2,\ldots,\alpha_s\mid \alpha_1<\alpha_2<\ldots<\alpha_s\}
\subseteq \{1,2,\ldots,n\}$.
The element $(R(1))^k$ is a~sum of summands of the form
$S = \mu \sum\limits_{\sigma\in S_k}e_{\alpha_{\sigma(1)}}\wedge\ldots \wedge e_{\alpha_{\sigma(k)}}$.
By anticommutativity of the generators of the Grassmann algebra,
the necessary condition for~$S$ not to be zero is the following:
all numbers $|\alpha_1|,|\alpha_2|,\ldots,|\alpha_k|$ (maybe except only one)
are even. Thus, $(R(1))^k\in \wedge^{2k-1}(V)$.
Hence, we have $(R(1))^{[(n+1)/2]+1} \in \wedge^{n+1}(V) = (0)$.

(d) Follows from (c) and Lemma~\ref{lem11}(a).
\end{proof}

\begin{exm}[\cite{BGP}]
Let $F$ be a field of characteristic not two.
Up to conjugation with an automorphism,
all RB-operators of nonzero weight on $\mathrm{Gr}_{2}$
over $F$ with linear basis $1,e_1,e_2,e_1\wedge e_2$ are the following:
$R(1),R(e_1)\in\Span\{e_2,e_1\wedge e_2\}$,
$R(e_2) = R(e_1\wedge e_2) = 0$.
\end{exm}

\begin{cor}
Given an RB-operator $R$ of weight zero on $\mathrm{Gr}_2$
over a field $F$ with $\char F\neq2$, we have $R^2 = 0$ and
$R$~corresponds to Lemma~\ref{lem9}(a2). Moreover, $\rb(\mathrm{Gr}_2) = 2$.
\end{cor}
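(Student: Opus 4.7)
The plan is to combine Lemma~\ref{lem14} with a short anticommutativity calculation inside $A_0(2)=\Span\{e_1,e_2,e_{12}\}$ (writing $e_{12}=e_1\wedge e_2$) to force $R^2=0$, then to select a $\mathbb{Z}_2$-grading that realizes $R$ via Lemma~\ref{lem9}(a2), and finally to exhibit a nonzero example so that $\rb(\mathrm{Gr}_2)$ is pinned down.

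For $R^2=0$: Lemma~\ref{lem14}(a),(b) and Lemma~\ref{lem14}(c) give $\Imm R\subseteq A_0(2)$, $R(e_{12})=0$, and $(R(1))^2=0$, and the last of these combined with Lemma~\ref{lem1}(d) yields $R^2(1)=0$. The key observation is that the product on $A_0(2)$ is anti-symmetric, since the only nonzero pair of multiplications among $e_1,e_2,e_{12}$ is $e_1e_2=-e_2e_1$; hence $uv+vu=0$ for all $u,v\in A_0(2)$. A direct computation also shows that $R(1)\cdot e_i$ and $e_i\cdot R(1)$ lie in $\Span\{e_{12}\}\subseteq\ker R$. Applying the RB-identity to the pairs $(1,e_i)$ and $(e_i,1)$ gives
\[
R^2(e_i)=R(1)R(e_i)=R(e_i)R(1)=-R(1)R(e_i)=-R^2(e_i),
\]
so $R^2(e_i)=0$ since $\char F\neq 2$. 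Together with $R^2(1)=R^2(e_{12})=0$ this yields $R^2=0$.

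For the correspondence with Lemma~\ref{lem9}(a2), I would write $R(1)=a_1e_1+a_2e_2+a_{12}e_{12}$; then $(R(1))^2=2a_1a_2e_{12}=0$ forces $a_1a_2=0$. Up to swapping $e_1\leftrightarrow e_2$ by an automorphism I may assume $a_2=0$, and then the automorphism $e_1\mapsto e_1+\mu e_{12}$ (with suitable~$\mu$) normalizes $R(1)$ to $a_1e_1$ (when $a_1\neq 0$), to $a_{12}e_{12}$, or to $0$. Using $R^2=0$ together with the RB-identity on pairs from $\{1,e_1,e_2\}$, I will pin down the shapes of $R(e_1)$ and $R(e_2)$ case by case; in each subcase the decomposition $A_0=\Span\{1,e_2\}$, $A_1=\Span\{e_1,e_{12}\}$ (or its mirror obtained by interchanging $e_1$ and $e_2$) is a $\mathbb{Z}_2$-grading of $\mathrm{Gr}_2$ with abelian odd part --- since $e_1\cdot e_{12}=e_{12}\cdot e_1=e_{12}^2=0$ --- and the pinned-down form of $R$ satisfies $R(A_0)\subseteq A_1$ and $R(A_1)=0$, placing $R$ into the setting of Lemma~\ref{lem9}(a2).

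For the last assertion, the operator sending $1\mapsto e_1$ and annihilating $e_1,e_2,e_{12}$ is readily checked to be a nontrivial RB-operator of weight zero with $R^2=0$, so $\rb(\mathrm{Gr}_2)\leq 2$; combined with $R\neq 0$ ruling out $\rb(\mathrm{Gr}_2)=1$, equality follows. The hard part will be the correspondence step: no single ``standard'' $\mathbb{Z}_2$-grading of $\mathrm{Gr}_2$ fits every~$R$, so the grading witnessing Lemma~\ref{lem9}(a2) must be chosen according to the normalization of $R(1)$ and the resulting structure of $R(e_1),R(e_2)$ forced by $R^2=0$.
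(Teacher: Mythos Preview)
Your argument is correct but takes a different route from the paper. In the paper the corollary is read off directly from the preceding Example (the classification from~\cite{BGP}): up to conjugation with $\Aut(\mathrm{Gr}_2)$, every weight-zero RB-operator on $\mathrm{Gr}_2$ satisfies $R(1),R(e_1)\in\Span\{e_2,e_{12}\}$ and $R(e_2)=R(e_{12})=0$; from this normal form $R^2=0$ is immediate and the $\mathbb{Z}_2$-grading $A_0=\Span\{1,e_1\}$, $A_1=\Span\{e_2,e_{12}\}$ visibly realizes Lemma~\ref{lem9}(a2). Your proof of $R^2=0$ via the anticommutativity of $A_0(2)$ is a clean self-contained alternative that avoids invoking the full classification. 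For the correspondence with Lemma~\ref{lem9}(a2), however, your projected case analysis (normalize $R(1)$, then pin down $R(e_1),R(e_2)$) is essentially a rederivation of the \cite{BGP} classification; it works, but be aware that some subcases need an extra automorphism beyond normalizing $R(1)$ --- e.g.\ when $R(1)=0$ and both $R(e_1),R(e_2)$ are nonzero multiples of $e_{12}$, neither of the two ``standard'' gradings fits until one conjugates so that one of the generators lands in $\ker R$. The paper's approach trades this labor for an external citation; yours buys independence from~\cite{BGP} at the cost of carrying out that case split in full.
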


\begin{sta}\label{sta8}
Let $\char F\neq 2,3$. Given an RB-operator $R$ of nonzero
weight on $\mathrm{Gr}_{3}$, we have $R^3 = 0$.
\end{sta}

\begin{proof}
Applying Lemma~\ref{lem14}(b) and~(\ref{RB}), let us compute the following
for $x = 1+y$, $y\in A_0(3)$,
\begin{multline*}
0 = R(x)R(x)R(x)
  = R( R(x)R(x)x + R(x)xR(x) + xR(x)R(x) ) \\
  = 3R(R(x)R(x))
  = 3R^2(R(x)x+xR(x)) \\
  = 6R^3(x) + 3R^2(R(x)y+yR(x)) = 6R^3(x).
\end{multline*}
Using $R^3(1) = 1/6(R(1))^3 = 0$, we have done.
\end{proof}

\begin{exm}
Let a linear map $R$ on $\mathrm{Gr}_3$ be such that
$R(e_1\wedge e_2) = e_3$ and $R$ equals zero on all other basic elements.
The operator $R$ is an RB-operator of weight zero on $\mathrm{Gr}_3$
corresponds to Lemma~\ref{lem9}(a1) for $A_0 = \Span\{1,e_1\wedge e_2\}$
and $A_1$ equal to a linear span of all other basic elements.
\end{exm}

\begin{exm}\label{exmp1}
Let $\char F\neq 2,3$.
Define a linear map $R$ on $\mathrm{Gr}_3$ as follows:
$R(1) = e_1+e_2\wedge e_3$, $R(e_1) = e_1\wedge e_2\wedge e_3$
and $R$ equals zero on all other basic elements.
Then $R$ is an RB-operator of weight zero,
$R^3 = 0$ but $R^2\neq 0$. The operator~$R$ is constructed by Lemma~\ref{lem9}(h)
for $B = \Span\{1,e_1+e_2\wedge e_3,e_1\wedge e_2\wedge e_3\}$
and $C = \Span\{e_2,e_3,e_1\wedge e_2,e_1\wedge e_3,e_2\wedge e_3\}$,
where $R$ on $B$ corresponds to Lemma~\ref{lem9}(f).
\end{exm}

\begin{cor}\label{cor12}
Over a field $F$ with $\char F\neq 2,3$, we have $\rb(\mathrm{Gr}_3) = 3$.
\end{cor}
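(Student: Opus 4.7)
The plan is to prove both inequalities $\rb(\mathrm{Gr}_3)\leq 3$ and $\rb(\mathrm{Gr}_3)\geq 3$ by combining the two results immediately preceding this corollary.

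For the upper bound, I would invoke Statement~\ref{sta8}: every RB-operator $R$ of weight zero on $\mathrm{Gr}_3$ satisfies $R^3 = 0$. By the definition of the RB-index introduced right after Theorem~\ref{thm12}, this is exactly the assertion $\rb(\mathrm{Gr}_3)\leq 3$; no further work is needed here.

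For the lower bound, I would use Example~\ref{exmp1}, which constructs, under the assumption $\char F\neq 2,3$, an explicit RB-operator $R$ of weight zero on $\mathrm{Gr}_3$. It suffices to verify that $R^2\neq 0$, which witnesses $\rb(\mathrm{Gr}_3) > 2$. By the definition of $R$ in the example, $R(1) = e_1 + e_2\wedge e_3$, $R(e_1) = e_1\wedge e_2\wedge e_3$, and $R(e_2\wedge e_3) = 0$; hence $R^2(1) = R(e_1) + R(e_2\wedge e_3) = e_1\wedge e_2\wedge e_3 \neq 0$. Thus $\rb(\mathrm{Gr}_3)\geq 3$, and together with the upper bound this gives $\rb(\mathrm{Gr}_3)=3$.

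There is essentially no obstacle here: both ingredients are already established in the preceding material, so the proof reduces to observing that Statement~\ref{sta8} supplies the upper bound while Example~\ref{exmp1} supplies the lower bound. The only arithmetic needed is the one-line evaluation of $R^2(1)$ on the explicit example.
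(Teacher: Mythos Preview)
Your proposal is correct and matches the paper's own proof, which is the single line ``It follows from Statement~\ref{sta8} and Example~\ref{exmp1}.'' Your explicit computation of $R^2(1)$ is a harmless elaboration, since Example~\ref{exmp1} already asserts $R^2\neq 0$.
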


\begin{proof}
It follows from Statement~\ref{sta8} and Example~\ref{exmp1}.
\end{proof}

\begin{prob}
What is the precise value of $\rb(\mathrm{Gr}_n)$ for $n>3$?
\end{prob}

\subsection{Matrix algebra}

\begin{lem}\label{lem15}
Let $R$ be an RB-operator of weight zero on $M_n(F)$, $\char F = 0$, then

(a) \cite{BGP} $\Imm R$ consists only of degenerate matrices
and $\dim(\Imm R)\leq n^2-n$,

(b) $R^{2n} = 0$,

(c) $\dim(\Imm R)\leq (2n-1)\dim(\Imm R\cap \ker R)$.
\end{lem}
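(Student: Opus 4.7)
The plan is to treat the three parts in order, using part (a) (cited from~\cite{BGP}) as input for (b) and (c).

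For part (b), the idea is to bound the nilpotency index of $R(1)$ and then invoke Lemma~\ref{lem11}(a). By Lemma~\ref{lem10}, $R(1)$ is nilpotent; by part~(a), it is degenerate, hence its minimal polynomial divides $x^n$, so $(R(1))^n=0$. Applying Lemma~\ref{lem11}(a) with $m=n$ yields $R^{2n}=0$. This step is essentially a one-line combination of earlier results, so it poses no difficulty.

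For part (c), the key idea is to examine the decreasing chain of images
\[
\Imm R = V_1 \supseteq V_2 \supseteq \ldots \supseteq V_{2n} = 0, \qquad V_k := \Imm R^k,
\]
whose terminal zero is guaranteed by part~(b). Since $R(V_k)=V_{k+1}$, the restriction $R|_{V_k}\colon V_k\to V_{k+1}$ is surjective, and its kernel is exactly $V_k\cap\ker R$. Therefore
\[
\dim V_k - \dim V_{k+1} = \dim(V_k\cap\ker R).
\]
Because $V_k\subseteq V_1=\Imm R$, each intersection $V_k\cap\ker R$ sits inside $\Imm R\cap\ker R$, so every difference in the chain is bounded above by $\dim(\Imm R\cap\ker R)$. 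Summing over $k=1,\ldots,2n-1$ telescopes to
\[
\dim(\Imm R) = \dim V_1 - \dim V_{2n} = \sum_{k=1}^{2n-1}\bigl(\dim V_k - \dim V_{k+1}\bigr) \le (2n-1)\dim(\Imm R\cap\ker R),
\]
which is exactly the required inequality.

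There is no real obstacle here: once part~(b) supplies the length-$2n$ bound on the descent of images, the telescoping argument in part~(c) is routine. The only point that needs care is observing that $R$ maps $V_k$ onto $V_{k+1}$ with kernel $V_k\cap\ker R$, and that these kernels are nested inside $\Imm R\cap\ker R$ because the $V_k$ are nested inside $\Imm R$.
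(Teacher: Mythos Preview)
Your proof is correct and follows the same route as the paper: part~(b) is argued exactly as in the paper (nilpotency of $R(1)$ from Lemma~\ref{lem10}, then $(R(1))^n=0$ and Lemma~\ref{lem11}(a)), and your telescoping argument for part~(c) is precisely the content behind the paper's terse ``It follows from (b).'' One very minor remark: in part~(b) you do not need to invoke part~(a) or degeneracy---once $R(1)$ is nilpotent in $M_n(F)$, Cayley--Hamilton already gives $(R(1))^n=0$.
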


\begin{proof}
(b) By Lemma~\ref{lem10}, $R(1)$ is a nilpotent matrix. Thus, $(R(1))^n = 0$
and by Lemma~\ref{lem11}(a), $R^{2n} = 0$.

(c) It follows from (b).
\end{proof}

\begin{thm}[\cite{Aguiar00-2,BGP,Mat2}]\label{thm15}
All nonzero RB-operators of weight zero on $M_2(F)$ over an algebraically closed field $F$
up to conjugation with automorphisms $M_2(F)$, transpose
and multiplication on a nonzero scalar are the following:

(M1) $R(e_{21}) = e_{12}$, $R(e_{11}) = R(e_{12}) = R(e_{22}) = 0$;

(M2) $R(e_{21}) = e_{11}$, $R(e_{11}) = R(e_{12}) = R(e_{22}) = 0$;

(M3) $R(e_{21}) = e_{11}$, $R(e_{22}) = e_{12}$, $R(e_{11}) = R(e_{12}) = 0$;

(M4) $R(e_{21}) =- e_{11}$, $R(e_{11}) = e_{12}$, $R(e_{12}) = R(e_{22}) = 0$.
\end{thm}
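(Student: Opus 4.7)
The plan is to run a case analysis on $d := \dim(\Imm R)$. By Lemma~\ref{lem1}(b) applied to the simple algebra $M_2(F)$ we have $1 \notin \Imm R$ and $\dim\ker R \ge 2$, and by Lemma~\ref{lem15}(a) every element of $\Imm R$ is degenerate, so $d \le 2$; since $R \ne 0$, we get $d \in \{1,2\}$. The RB identity $R(x)R(y) = R(R(x)y + xR(y))$ places $R(x)R(y)$ in $\Imm R$, hence $\Imm R$ is a subalgebra of $M_2(F)$. The first task is to classify subalgebras of $M_2(F)$ of dimension at most~$2$ that consist of rank-$\le 1$ matrices: up to conjugation in $\Aut M_2(F)$ and transpose one finds exactly $Fe_{12}$, $Fe_{11}$, and $\Span\{e_{11},e_{12}\}$. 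The $2$-dimensional case is handled by observing that $F\cdot 1 + \Imm R$ is a $3$-dimensional subalgebra of $M_2(F)$, hence conjugate to the upper-triangular matrices, which forces $\Imm R$ into the stated form.

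In the two one-dimensional cases, write $R(X)=f(X)v$ for the generator $v$ and a linear functional $f$, and convert the RB identity into a scalar relation on $f$. When $v=e_{12}$ (so $v^2=0$), the identity collapses to $R(x)y + xR(y) \in \ker R$ for all $x,y$; expanding on matrix units forces $f$ to vanish outside $Fe_{21}$, giving (M1). When $v=e_{11}$ (so $v^2=v$), the identity becomes $f(x)f(y) = f(x)f(e_{11}y) + f(y)f(xe_{11})$; successive specializations of $x$ and $y$ to $e_{22}$, $e_{12}$, and $e_{11}$ pin down $f$ as a nonzero multiple of $X \mapsto X_{21}$ up to transpose, yielding (M2).

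The main obstacle is the two-dimensional case $\Imm R = \Span\{e_{11},e_{12}\}$, where $R$ has substantially more free parameters. The plan here is first to exploit the stabilizer of $\Imm R$ in $\Aut M_2(F)$---conjugation by invertible upper-triangular matrices modulo scalars---to simplify. The key dichotomy is then whether $\Imm R \subseteq \ker R$: in the first branch $R^2=0$ and $\ker R = \Imm R$, so $R$ is determined by its restriction to the complement $\Span\{e_{21},e_{22}\}$, and a short direct computation with the RB identity yields $R(e_{21}) = \alpha e_{11}$, $R(e_{22}) = \delta e_{12}$ with $\alpha = \delta$, which is (M3) after rescaling. In the other branch some $u \in \Imm R$ satisfies $R(u)\neq 0$, producing a nilpotent chain that must terminate by Lemma~\ref{lem15}(b); tracking this chain together with the RB identity and the constraint $\dim\ker R=2$ forces the unique normal form (M4).
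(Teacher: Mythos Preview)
The paper does not give its own proof of Theorem~\ref{thm15}; it cites the result from \cite{Aguiar00-2,BGP,Mat2}. So there is no ``paper's proof'' to compare against, and your job is just to make the argument stand on its own.

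Your overall strategy is sound, and the one-dimensional cases (M1), (M2) and the $R^2=0$ branch (M3) are fine once written out. The gap is in the last branch, where $\Imm R=\Span\{e_{11},e_{12}\}$ and $R|_{\Imm R}\neq 0$. Your single sentence about a ``nilpotent chain'' does not yet pin down (M4): after using Lemma~\ref{lem15}(b) you know $R|_{\Imm R}$ is nilpotent of rank~$1$, so its image equals its kernel, a line in $\Span\{e_{11},e_{12}\}$. The stabilizer (conjugation by upper-triangular invertibles) has \emph{two} orbits on lines in $\Span\{e_{11},e_{12}\}$: the nilpotent line $Fe_{12}$ and the idempotent family, which you can normalize to $Fe_{11}$. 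These give two genuinely different sub-cases, and you must treat them separately.

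If $\ker(R|_{\Imm R})=Fe_{12}$, rescale so $R(e_{11})=e_{12}$, $R(e_{12})=0$, and write $R(e_{21})=\alpha e_{11}+\beta e_{12}$, $R(e_{22})=\gamma e_{11}+\delta e_{12}$. The RB identity with $x=y=e_{22}$ gives $\gamma=0$; with $x=e_{22},\,y=e_{21}$ it gives $\delta=0$; then $x=y=e_{21}$ gives $\beta=0$ and $x=e_{11},\,y=e_{21}$ gives $\alpha=-1$, which is exactly (M4). If instead $\ker(R|_{\Imm R})=Fe_{11}$, so $R(e_{11})=0$ and $R(e_{12})=e_{11}$ after rescaling, then $x=e_{12}$, $y=e_{22}$ in the RB identity forces $R(e_{22})=0$, and $x=e_{12}$, $y=e_{21}$ forces $R(e_{21})=0$; hence $\dim\Imm R=1$, a contradiction. (Equivalently, this case puts the idempotent $e_{11}$ in $\Imm R^2\cap\ker R$, which Lemma~\ref{lem13} forbids.) You should include this dichotomy explicitly; otherwise the (M4) branch is an assertion, not a proof.
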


\begin{cor}
All nonzero RB-operators of weight zero on $M_2(F)$ over an algebraically closed field~$F$
correspond to Lemma~\ref{lem9}.
\end{cor}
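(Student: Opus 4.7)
The plan is to use Theorem~\ref{thm15} as given, which enumerates every nonzero weight-zero RB-operator on $M_2(F)$ (up to conjugation, transpose and scalar multiple) as one of the four cases (M1)--(M4), and then to exhibit for each case an explicit vector-space decomposition of $M_2(F)$ realizing $R$ as an instance of the appropriate clause of Lemma~\ref{lem9}. The bulk of the argument is routine direct verification of the structural conditions of the chosen clause; the only real choice to be made is which clause to match.

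For (M1), I would set $B=\Span\{e_{21}\}$, $C=\Span\{e_{11},e_{12},e_{22}\}$ and invoke Lemma~\ref{lem9}(a). The image $Fe_{12}$ is abelian since $e_{12}^2=0$; $R$~sends $B\to C$ and kills $C$; and the remaining inclusions $BR(B),R(B)B\subseteq C$ reduce to the computations $e_{21}e_{12}=e_{22}$ and $e_{12}e_{21}=e_{11}$. For (M2), I keep the same decomposition but now invoke Lemma~\ref{lem9}(b): $\Imm R=Fe_{11}$ is no longer abelian, but $R(B)C$ and $CR(B)$ both lie in $C$, and the RB identity on the one-dimensional space~$B$ reduces to the single check $R(e_{21})^2=e_{11}=R(R(e_{21})e_{21}+e_{21}R(e_{21}))$. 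For (M3), take $B=\Span\{e_{21},e_{22}\}$ and $C=\Span\{e_{11},e_{12}\}$ (both left ideals); again apply Lemma~\ref{lem9}(b), noting $R(B)=C$ and $C^2\subseteq C$, and verifying the RB identity on~$B$ by a short check on the four pairs of basis vectors.

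The one delicate case is (M4). Here $R$ produces a length-three chain $e_{21}\mapsto -e_{11}\mapsto -e_{12}\mapsto 0$, which tempts one to try Lemma~\ref{lem9}(e); this fails immediately because the natural middle piece $C=Fe_{11}$ satisfies $e_{11}^2=e_{11}\ne 0$, violating the $C^2=0$ hypothesis. The correct choice is Lemma~\ref{lem9}(g), which only requires $R(C)$ (not $C$ itself) to be abelian. With $B=\Span\{e_{21}\}$, $C=\Span\{e_{11}\}$, $D=\Span\{e_{12},e_{22}\}$, the abelianness of $R(C)=Fe_{12}$ is clear, the inclusions $CD,DC,D^2\subseteq D$ follow from direct products of matrix units, and the layered action $R\colon B\to C\to D\to 0$ is manifest. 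It then remains to verify the RB identity for pairs $(x,y)$ with at least one entry in $B$: the cases $x=y=e_{21}$, $(x,y)=(e_{21},-e_{11})$ and $(x,y)=(-e_{11},e_{21})$ each reduce to a one-line computation, while all pairs avoiding $B$ are handled automatically by the hypotheses (using that $R$ annihilates $D$ and that $R(C)$ is abelian).

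The main obstacle, such as it is, lies in (M4): one must recognise that the chain structure is not rigid enough for clause~(e) and that clause~(g) is exactly the right relaxation. Once that identification is made, every verification is a finite, mechanical check in the four-dimensional algebra $M_2(F)$, so I would simply present the four decompositions and the corresponding small tables of products without elaborate calculation.
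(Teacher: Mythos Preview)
Your proposal is correct and follows essentially the same approach as the paper: (M1) via Lemma~\ref{lem9}(a), (M2) and (M3) via Lemma~\ref{lem9}(b), and (M4) via Lemma~\ref{lem9}(g) with exactly the decomposition $B=\Span\{e_{21}\}$, $C=\Span\{e_{11}\}$, $D=\Span\{e_{12},e_{22}\}$. The paper's proof merely records these assignments without the verification details you supply, so your write-up is a faithful (and more explicit) version of the intended argument.
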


\begin{proof}
(M1) corresponds to Lemma~\ref{lem9}(a);
(M2), (M3) correspond to Lemma~\ref{lem9}(b);
(M4) corresponds to Lemma~\ref{lem9}(g) for
$B = \Span\{e_{21}\}$, $C = \Span\{e_{11}\}$,
$D = \Span\{e_{12},e_{22}\}$.
\end{proof}

\begin{rem}
If $R$ is an RB-operator on $M_2(F)$, then
$R$ is an RB-operator on $\textrm{gl}_2(\mathbb{C})$ by Statement~\ref{sta5}(a).
Since $\textrm{gl}_2(\mathbb{C}) = \textrm{sl}_2(\mathbb{C})\oplus \Span\{E\}$,
the projection of $R$ on $\textrm{sl}_2(\mathbb{C})$ is an RB-operator
by Lemma~\ref{lem2}. Thus, the projection of (M1) gives (L5),
the projection of (M2) gives (L4), and the projections of (M3), (M4)
coincide with (L1), where $t = 0$.
\end{rem}

\begin{exm}[\cite{GolubchikSokolov}]
Let $A$ be a subalgebra of $M_n(F)$ of matrices
with zero $n$-th row. Denote by $A_0$ the subalgebra
of $A$ consisting of matrices with zero $n$-th column
and by $A_1$ the~subspace of $A$ of matrices with all
zero columns except $n$-th. A linear map $R$
acting as follows: $R\colon A_0\to A_1$, $R\colon A_1 \to (0)$
is an RB-operator on $A$ by Lemma~\ref{lem9}(a1).
\end{exm}

\begin{exm}\label{exm23}
Let $R$ be an RB-operator of weight zero on $M_{n-1}(F)$,
then a linear operator $P$ of weight zero on $M_n(F)$ defined as follows:
$P(e_{ij}) = R(e_{ij})$, $1\leq i,j\leq n-1$,
$P(e_{in}) = P(e_{ni}) = 0$, $1\leq i\leq n$,
is an RB-operator of weight zero on $M_n(F)$ by Lemma~\ref{lem9}(h).
\end{exm}

In Corollary~\ref{cor1} all skew-symmetric RB-operators of weight zero on
$M_3(\mathbb{C})$ up to conjugation, transpose and scalar multiple were listed.

\begin{sta}\label{sta9}
All skew-symmetric RB-operators of weight zero on $M_3(\mathbb{C})$
except (R2) correspond to Lemma~\ref{lem9}.
\end{sta}

\begin{proof}
The RB-operator (R1) corresponds to Lemma~\ref{lem9}(a) for the space
$B =\Span\{e_{31},e_{32}\}$
and the linear span $C$ of all other matrix unities.
The RB-operator (R3) corresponds to Lemma~\ref{lem9}(h),
it is the extension of (M4) from Theorem~\ref{thm15} by Example~\ref{exm23}.

Finally, the RB-operators (R4)--(R8) correspond to Lemma~\ref{lem9}(g)
for $B = \Span\{e_{13},e_{23}\}$, $C =\Span\{e_{11},e_{12},e_{21},e_{22},e_{33}\}$
and $D = \Span\{e_{31},e_{32}\}$.
\end{proof}

Modifying and generalizing the RB-operator (R2), we get the following examples.

\begin{exm}\label{exm24}
For $A = M_n(F)$, define $r\in A\otimes A$ as
\begin{multline}\label{AYBE^Mat_n}
r = \sum\limits_{1\leq i\leq j\leq n-1}
(e_{ji}\otimes (e_{i,j+1}+e_{i+1,j+2}+\ldots+e_{n-j+i-1,n}) \\
-(e_{i,j+1}+e_{i+1,j+2}+\ldots+e_{n-j+i-1,n})\otimes e_{ji}).
\end{multline}
By the definition, $r$~is skew-symmetric. It is easy to check that~$r$
is a~solution of the associative Yang---Baxter equation~(\ref{AYBE}).
Up to sign, $r$~coincides with Example 2.3.3 from~\cite{Aguiar01}.
\end{exm}

\begin{exm}\label{exm25}
An RB-operator on $M_n(F)$ obtained from Example~\ref{exm24}
by Statement~\ref{sta4} is the following one:
\begin{equation}\label{MaxRbMat}
R(e_{ij}) = \begin{cases}
e_{i,j+1}+e_{i+1,j+2}+\ldots+e_{n-j+i-1,n}, & i\leq j\leq n-1, \\
-(e_{i-1,j}+e_{i-2,j-1}+\ldots+e_{i-j+1,1}), & i>j, \\
0, & j = n. \end{cases}
\end{equation}
Due to the definition, we have $R^{2n-1} = 0$ and $R^{2n-2}\neq0$, as
\begin{multline*}
(-1)^{n+1}e_{n,1}\to_R (-1)^n e_{n-1,1}\to_R (-1)^{n-1}e_{n-2,1}\to_R\ldots \\
\to_R e_{11}\to_R e_{12}+e_{23}+\ldots+e_{n-1,n}\to_R e_{13}+2e_{24}+3e_{35}+\ldots+(n-2)e_{n-2,n} \\
\ldots \to_R e_{1,n-1} + (n-2)e_{2,n}\to_R e_{1,n}\to_R 0.
\end{multline*}
\end{exm}

\begin{rem}\label{remGrad}
Note that the RB-operator from Example~\ref{exm25} preserves the well-known
\linebreak
$\mathbb{Z}$-grading  on $M_n(F)$:
$M_n(F) = \sum\limits_{i=-n+1}^{n-1}M_i$,
$M_i = \Span\{e_{st}\mid t-s = i\}$.
\end{rem}

\begin{rem}\label{rem9}
The linear operator defined by~(\ref{MaxRbMat}) on the ring of matrices $M_n(A)$
over an algebra~$A$ is an RB-operator on $M_n(A)$.
So we have inequality $2n-1\leq\rb(M_n(A))$ for every nonzero algebra~$A$.
\end{rem}

\begin{cor}
We have $2n-1\leq \rb(M_n(\mathbb{H}))\leq 2n$,
where $\mathbb{H}$ is the algebra of quaternions.
\end{cor}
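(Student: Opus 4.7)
The lower bound $2n-1\leq\rb(M_n(\mathbb{H}))$ is essentially given for free by Remark~\ref{rem9}: applying the construction~\eqref{MaxRbMat} to matrices with quaternion entries produces an RB-operator of weight zero on $M_n(\mathbb{H})$ whose nilpotency index is exactly $2n-1$, as witnessed by the chain of images in Example~\ref{exm25}. So the whole work is to establish the upper bound.

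For the upper bound I would show that every RB-operator $R$ of weight zero on the $\mathbb{R}$-algebra $M_n(\mathbb{H})$ satisfies $R^{2n}=0$. The plan is to combine two earlier lemmas. Since $M_n(\mathbb{H})$ is associative, unital, and finite-dimensional (hence algebraic) over the characteristic-zero field $\mathbb{R}$, Lemma~\ref{lem10} yields that $R(1)$ is a nilpotent element of $M_n(\mathbb{H})$. The only nontrivial task is then to sharpen the nilpotency index of $R(1)$ to at most $n$, after which Lemma~\ref{lem11}(a) (applicable because $M_n(\mathbb{H})$ is an associative unital algebra over $\mathbb{R}$) with $m=n$ gives $R^{2n}=0$.

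The key observation is that every nilpotent element $X\in M_n(\mathbb{H})$ has nilpotency index at most $n$. Indeed, let $X$ act as a right $\mathbb{H}$-linear endomorphism of the right $\mathbb{H}$-module $\mathbb{H}^n$, which has $\mathbb{H}$-dimension $n$. The standard kernel filtration
$$
0\subseteq \ker X\subseteq \ker X^2\subseteq \ker X^3\subseteq\ldots
$$
is strictly increasing until it stabilizes (since $\ker X^i=\ker X^{i+1}$ forces $\ker X^j=\ker X^i$ for all $j\geq i$), so stabilization must occur within $n$ steps. Thus $X^n=0$, and specializing to $X=R(1)$ gives $(R(1))^n=0$, which is exactly the hypothesis of Lemma~\ref{lem11}(a).

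There is no real obstacle in this plan: the non-commutativity of $\mathbb{H}$ is harmless because the kernel-filtration argument only uses the right $\mathbb{H}$-module structure on $\mathbb{H}^n$ and the fact that its right-$\mathbb{H}$-dimension equals $n$. Once these two ingredients are assembled, the upper bound $\rb(M_n(\mathbb{H}))\leq 2n$ follows immediately, completing the inequality $2n-1\leq\rb(M_n(\mathbb{H}))\leq 2n$.
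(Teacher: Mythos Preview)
Your proof is correct and follows essentially the same approach as the paper: lower bound from Remark~\ref{rem9}, upper bound from Lemma~\ref{lem11}(a) once one knows $(R(1))^n=0$. The only difference is that the paper cites the literature \cite{Wiegmann,Quater} for the fact that a nilpotent matrix in $M_n(\mathbb{H})$ has nilpotency index at most $n$, whereas you supply a direct kernel-filtration argument over the division ring $\mathbb{H}$; both are fine.
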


\begin{proof}
We consider $M\in M_n(\mathbb{H})$ as an algebra over $\mathbb{R}$.
The lower bound follows from Remark~\ref{rem9}.
Given a nilpotent matrix $M\in M_n(\mathbb{H})$, we have $M^n = 0$ \cite{Wiegmann,Quater}.
Thus, the upper bound follows from Lemma~\ref{lem11}(a).
\end{proof}

\begin{thm}\label{thmRBMn}
Given a field $F$ of characteristic zero,
we have $\rb(M_n(F)) = 2n - 1$.
\end{thm}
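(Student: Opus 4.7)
The proof splits into matching upper and lower bounds.

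\textbf{Lower bound.} The explicit weight-zero RB-operator from Example~\ref{exm25} (see also Remark~\ref{rem9}) has the $R$-chain $e_{n,1} \mapsto \cdots \mapsto e_{1,n} \mapsto 0$ of length $2n-1$, witnessing $R^{2n-2} \ne 0$; hence $\rb(M_n(F)) \ge 2n-1$.

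\textbf{Upper bound.} I must show $R^{2n-1}=0$ for every weight-zero RB-operator $R$ on $M_n(F)$. By Lemma~\ref{lem10}, $a := R(1)$ is nilpotent; let $m$ be its nilpotency index, so $1\le m\le n$. If $m\le n-1$, then Lemma~\ref{lem11}(a) yields $R^{2m}=0$, and $2m\le 2n-2$ already forces $R^{2n-1}=0$. The substantive case is $m=n$; up to conjugation (Statement~\ref{sta2}) one may take $a=J=\sum_{i=1}^{n-1}e_{i,i+1}$, the standard regular nilpotent Jordan block.

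My plan is to push the iterative argument behind Lemma~\ref{lem11}(a) exactly one extra round. Iterating the identity $R(x) R^l(1) = R^{l+1}(x) + \sum_{s=1}^{l} R^{l-s+1}(xR^s(1))$, whose summands with $s \ge n$ vanish since $R^s(1)=a^s/s!=0$, through $n-1$ rounds collapses $R^{2n-1}(x)$ to a nonzero scalar multiple of $R^n(z)$ with $z := xa^{n-1}$. Since $za=xa^n=0$, substituting $z$ into the RB-identity gives $R(z)a = R^2(z)$ and inductively $R^{k+1}(z) = R(z)a^k/k!$; in particular $R^{n+1}(z) = 0$ holds automatically. The problem thus reduces to showing $R^n(z) = R(z)a^{n-1}/(n-1)! = 0$ for every $z$ in the $n$-dimensional left annihilator $V := M_n(F)\cdot a^{n-1} = \{y : ya = 0\}$. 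In Jordan coordinates $a^{n-1}=e_{1,n}$, and $Me_{1,n}$ places the first column of $M$ into the $n$-th column, so the goal is equivalent to $R(z)$ having zero first column for every $z \in V$.

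I would establish this by induction on the basis $\{e_{k,n}\}_{k=1}^{n}$ of $V$, using the commutator identity $[a, R(y)] = R(ay)$ (valid for $y\in V$, obtained from the two RB-expansions of $R(1)R(y)$ and $R(y)R(1)$ together with $ya=0$). The base case $k=1$ follows because $ae_{1,n}=0$ forces $[a,R(e_{1,n})]=0$, placing $R(e_{1,n})$ in the centralizer $F[a]$; together with $1\notin \Imm R$ (Lemma~\ref{lem1}(b)) and $a^j=j!\,R^j(1)\in\Imm R$ for $j\ge 1$, one concludes $R(e_{1,n})\in\Span\{a,a^2,\ldots,a^{n-1}\}$, each element of which visibly has zero first column. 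The inductive step $k\Rightarrow k+1$ uses $[a,R(e_{k+1,n})]=R(e_{k,n})$: comparing first columns and exploiting that $a$ has zero first column kills the entries $M_{21},\ldots,M_{n1}$ of $M := R(e_{k+1,n})$. The main obstacle is eliminating the remaining $(1,1)$-entry $M_{11}$, which the commutator relation alone does not determine; a natural route is to factor $e_{k+1,n} = e_{k+1,1}\cdot a^{n-1}$ with $a^{n-1} = R((n-1)a^{n-2})$ and invoke the RB-identity, expressing $R(e_{k+1,n})$ as $R(e_{k+1,1})\cdot a^{n-1}$ (which has zero first column automatically since multiplication by $e_{1,n}$ places the first column in the $n$-th position) minus $(n-1)R(R(e_{k+1,1}) a^{n-2})$, and controlling the first column of the second summand via further iteration together with the constraints on $\Imm R$ from Lemma~\ref{lem15}.
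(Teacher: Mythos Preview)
Your lower bound and your reduction to the case $R(1)=J$ (the regular nilpotent Jordan block) are correct and match the paper. Your further reduction---that $R^{2n-1}(x)$ is a scalar multiple of $R^n(xa^{n-1})=R(xa^{n-1})\,a^{n-1}/(n-1)!$, so that it suffices to show the first column of $R(e_{k,n})$ vanishes for all $k$---is also correct, and is a pleasant, more targeted reformulation than the paper's. Your commutator identity $[a,R(y)]=R(ay)$ for $y\in V$ and the base case $k=1$ are fine.

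The gap is exactly where you flag it: the $(1,1)$-entry in the inductive step. Your proposed workaround does not close it. Writing $R(e_{k+1,n})=R(e_{k+1,1})a^{n-1}-(n-1)R\big(R(e_{k+1,1})a^{n-2}\big)$ is valid, but the second summand involves $R$ applied to $u a^{n-2}$ with $u=R(e_{k+1,1})$ completely uncontrolled; expanding $ua^{n-2}=\sum_i u_{i1}e_{i,n-1}+\sum_i u_{i2}e_{i,n}$ brings in both $R(e_{i,n-1})$ (outside $V$, no hypothesis available) and $R(e_{i,n})$ for $i>k$ (beyond the inductive range). Neither ``further iteration'' nor Lemma~\ref{lem15} supplies information about the $(1,1)$-entry of such terms. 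A seemingly promising shortcut---$R^{n+1}(e_{k+1,n})=0$ forces $M_{11}\,R(e_{1,n})=0$---also fails, since $R(e_{1,n})$ may well be zero.

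The paper avoids this obstruction by proving a strictly stronger statement: for every $i$, $R(M_i)\subseteq\bigoplus_{j>i}M_j$ in the $\mathbb{Z}$-grading $M_i=\Span\{e_{st}:t-s=i\}$. The extra leverage comes from comparing $R(e_{1,i+1})R^2(1)$ computed directly versus via the RB-identity (equations~\eqref{rb-index:indStepL}--\eqref{rb-index:indStepR}); the mismatch of the numerical coefficients $\tfrac12$ versus $\tfrac{1}{l+1}$ kills \emph{all} the low-grade coefficients simultaneously, while the leading term $a_{11}$ is eliminated because $\Imm R$ contains only degenerate matrices (Lemma~\ref{lem15}(a)). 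Your first-column-only induction carries too little information forward to replicate this; to repair your argument you would essentially need to upgrade the inductive hypothesis to the full grading shift, at which point you recover the paper's proof.
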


\begin{proof}
From Lemma~\ref{lem15}(b) and Example~\ref{exm25}, we have
$2n-1\leq \rb(M_n(F))\leq 2n$. Let us extend the field $F$
to its algebraic closure $\bar{F}$
and prove the statement over $\bar{F}$, as the RB-index
does not decrease under a field extension.

Consider the Jordan normal form $J$ of the matrix $R(1)$.
Since $R(1)$ is nilpotent, we have either $J^{n-1} = 0$
or $J = e_{12} + e_{23} + \ldots + e_{n-1n}$.
In the first case, we apply Lemma~\ref{lem11}
and get $R^{2n-2} = 0$ as required.
It remains to study the second one.
Up to the conjugation with an automorphism of $M_n(F)$
we may assume that $R(1) = J$.
By~Lemma~\ref{lem1}(d),
\begin{equation}\label{JordanDegrees}
R(J^k) = \frac{J^{k+1}}{k+1}
\end{equation}
and so $R(e_{nn}) = 0$.

Consider the $\mathbb{Z}$-grading on $M_n(F)$ from Remark~\ref{remGrad}.
We deduce Theorem from the following result.

\begin{lem}
We have
$R\colon M_i\to \sum\limits_{j=i+1}^{n-1}M_j$
for all $i=-n+1,\ldots,n-2$.
\end{lem}

\noindent{\it Proof}.\ 
We already know that $R$ maps $M_{n-1}$ to zero.
For $i = -n+1,\ldots,n-2$, we prove the statement by induction on~$i$
(decreasing from $n-2$ to~$-n+1$).

Let $i = n-2$. From
$$
R(1)R(e_{1n-1}) = R^2(e_{1n-1})
 = R(e_{1n-1})R(1)
$$
for $A = (a_{ij})_{i,j=1}^n = R(e_{1n-1})$, we conclude that
$$
\begin{pmatrix}
a_{21} & a_{22} & a_{23} & \ldots & a_{2n} \\
a_{31} & a_{32} & a_{33} & \ldots & a_{3n} \\
\hdotsfor{5} \\
a_{n1} & a_{n2} & a_{n3} & \ldots & a_{nn} \\
0 & 0 & 0 & 0 & 0 \\
\end{pmatrix} =
\begin{pmatrix}
0 & a_{11} & a_{12} & \ldots & a_{1n-1} \\
0 & a_{21} & a_{22} & \ldots & a_{2n-1} \\
\hdotsfor{5} \\
0 & a_{n-11} & a_{n-12} & \ldots & a_{n-1n-1} \\
0 & a_{n1} & a_{n2} & \ldots & a_{nn-1} \\
\end{pmatrix}.
$$
Thus,
$R(e_{1n-1}) = a_{11}E + a_{12}J + \ldots + a_{1n-1}J^{n-2} + a_{1n}J^{n-1}$.
Since $\Imm(R)$ contains only degenerate matrices, $a_{11} = 0$.
Further,
$$
R(e_{1n-1})R^2(1) = \frac{1}{2}R(e_{1n-1})J^2
 = \frac{1}{2}(a_{12}J^3 + a_{13}J^4 + \ldots + a_{1n-2}J^{n-1}),
$$
\begin{multline*}
R(e_{1n-1}R^2(1)) + R(R(e_{1n-1})R(1)) \\
 = R(a_{12}J^2 + a_{13}J^3 + \ldots + a_{1n-2}J^{n-2} + a_{1n-1}J^{n-1}) \\
 = \frac{a_{12}J^3}{3} + \frac{a_{13}J^4}{4} + \ldots + \frac{a_{1n-2}J^{n-1}}{n-1}.
\end{multline*}
Comparing coefficients, we have $R(e_{1n-1}) = a_{1n}J^{n-1} \in M_{n-1}$.
Since $R(e_{1n-1}$ $+e_{2n}) = e_{1n}/n$ by~\eqref{JordanDegrees},
we have $R(e_{2n})\in M_{n-1}$ as well.

Let us prove the inductive step for $i\geq0$.
Consider the matrix unity $e_{1 i+1}$,
the proof for other unities from $M_i$ is the same.
Denote $A = (a_{kl})_{k,l=1}^n = R(e_{1i+1})$. By
\begin{gather*}
R(1)R(e_{1i+1}) = R^2(e_{1i+1}), \\
R(e_{1i+1})R(1) = R^2(e_{1i+1}) + R(e_{1i+2})
\end{gather*}
we get
\begin{equation}\label{rb-index:RxR1}
\begin{pmatrix}
a_{21} & a_{22} & a_{23} & \ldots & a_{2n} \\
a_{31} & a_{32} & a_{33} & \ldots & a_{3n} \\
\hdotsfor{5} \\
a_{n1} & a_{n2} & a_{n3} & \ldots & a_{nn} \\
0 & 0 & 0 & 0 & 0 \\
\end{pmatrix}  \\
= \begin{pmatrix}
0 & a_{11} & a_{12} & \ldots & a_{1n-1} \\
0 & a_{21} & a_{22} & \ldots & a_{2n-1} \\
\hdotsfor{5} \\
0 & a_{n-11} & a_{n-12} & \ldots & a_{n-1n-1} \\
0 & a_{n1} & a_{n2} & \ldots & a_{nn-1} \\
\end{pmatrix} + B
\end{equation}
for $B\in \sum\limits_{j=i+2}^{n-1}M_j$. Thus,
$R(e_{1i+1}) = a_{11}E + a_{12}J + a_{13}J^2 + \ldots + a_{1i+1}J^i + B'$
for $B'\in \sum\limits_{j=i+1}^{n-1}M_j$.
Since $\Imm(R)$ contains only degenerate matrices, $a_{11} = 0$.
Further,
\begin{equation}\label{rb-index:indStepL}
R(e_{1i+1})R^2(1) = \frac{1}{2}R(e_{1i+1})J^2
 = \frac{1}{2}(a_{12}J^3 + \ldots + a_{1i+1}J^{i+2} + B'J^2).
\end{equation}
By the inductive hypothesis,
\begin{multline}\label{rb-index:indStepR}
R(e_{1i+1}R^2(1)) + R(R(e_{1i+1})R(1))
 =  B'' + R(a_{12}J^2 + \ldots + a_{1i+1}J^{i+1} + B'J) \\
 = \frac{a_{12}J^3}{3} + \frac{a_{13}J^4}{4} + \ldots + \frac{a_{1i+1}J^{i+2}}{i+2} + B'''
\end{multline}
for $B'' = R(e_{1i+1}R^2(1))\in \sum\limits_{j=i+3}^{n-1}M_j$ and
$B''' = R(B'J) + B''\in \sum\limits_{j=i+3}^{n-1}M_j$.
Comparing coefficients in~\eqref{rb-index:indStepL} and~\eqref{rb-index:indStepR},
we have $a_{12} = a_{13} = \ldots = a_{1i+1} = 0$
and $R(e_{1i+1}) \in \sum\limits_{j=i+1}^{n-1}M_j$.

Finally, let us prove the inductive step for $-n+1\leq i<0$.
Consider the matrix unity $e_{|i|+11}$,
the proof for other unities from $M_i$ is the same.
Denote $A = (a_{kl})_{k,l=1}^n = R(e_{|i|+11})$. From the formulas
\begin{gather*}
R(1)R(e_{|i|+11}) = R^2(e_{|i|+11}) + R(e_{|i|1}), \\
R(e_{|i|+11})R(1) = R^2(e_{|i|+11}) + R(e_{|i|+12}),
\end{gather*}
we again deduce~\eqref{rb-index:RxR1} by the inductive hypothesis.
It is easy to see that all coefficients $a_{kl}$
corresponding to the matrix unities
$e_{|i|+k\, j}$, $k = 1,\ldots,n-|i|$, $j\leq k$, are zero.
Thus, $R(e_{|i|+11})\in \sum\limits_{j=i+1}^{n-1}M_j$.
We have proved Lemma and the whole Theorem as well.
\end{proof}

For an algebra $A$ equal $A_1\oplus A_2\oplus \ldots\oplus A_k$,
we have $\rb(A) \geq \max\limits_{i=1,\ldots,k}\{\rb(A_i)\}$ by Lemma~\ref{lem9}(i).
Suppose that $A$ is a semisimple finite-dimensional associative algebra
over a field $F$ of characteristic zero and
$A = \bigoplus\limits_{i=1}^k A_i$ for $A_i \cong M_{n_i}(F)$.
By Lemma~\ref{lem11}(a) and Theorem~\ref{thmRBMn},
$\rb(A) \leq 2\max\limits_{i=1,\ldots,k}n_i
 = 1 + \max\limits_{i=1,\ldots,k}\{\rb(A_i)\}$.

\begin{con}\label{Conjecture}
Let $A$ be a semisimple finite-dimensional associative algebra
over a field $F$ of characteristic zero.
If $A = A_1\oplus \ldots \oplus A_k$ for simple $A_i$, then
$\rb(A) = \max\limits_{i=1,\ldots,k}\{\rb(A_i)\}$.
\end{con}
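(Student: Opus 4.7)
The inequality $\rb(A)\geq \max_i \rb(A_i)$ is immediate from Lemma~\ref{lem9}(i): taking an RB-operator on the component $A_j$ realizing $\rb(A_j)$ and extending by zero on all other $A_i$ gives an RB-operator on $A$ of the same minimal nilpotency index. Combined with $\rb(M_{n_i}(F)) = 2n_i-1$ from Theorem~\ref{thmRBMn}, the task reduces to showing $R^{2N-1} = 0$ for every RB-operator $R$ of weight zero on $A$, where $N = \max_i n_i$. Since the RB-index does not decrease under a field extension, we may assume $F$ is algebraically closed.

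By Lemma~\ref{lem10}, $R(1)$ is nilpotent; since the nilpotency index in a direct sum of matrix algebras equals the maximum of the componentwise indices, $(R(1))^N = 0$. If $(R(1))^{N-1} = 0$, then Lemma~\ref{lem11}(a) already yields $R^{2(N-1)} = 0$, which is stronger than needed. So the crucial case is $(R(1))^{N-1}\neq 0$, when the nilpotency index of $R(1)$ is exactly $N$. Then at least one simple summand $A_j$ satisfies $n_j = N$ and $\mathrm{Pr}_j R(1)$ is similar to a single Jordan block $J_N\in A_j\cong M_N(F)$; up to conjugation with $\psi\in\Aut(A_j)\subseteq\Aut(A)$ (applied via Statement~\ref{sta2}) we normalize $\mathrm{Pr}_j R(1) = J_N$.

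The plan is then to adapt the grading analysis from the end of the proof of Theorem~\ref{thmRBMn}. Put the $\mathbb{Z}$-grading $A_j = \bigoplus_{l=-N+1}^{N-1}M_l$ with $M_l = \Span\{e_{st}\mid t-s = l\}$, and extend it to $A$ by placing every $A_i$ with $i\neq j$ in degree zero. The goal is to prove by decreasing induction on $l$ (exactly as in Theorem~\ref{thmRBMn}) that
\[
R(M_l) \;\subseteq\; \sum_{l'>l} M_{l'} \;+\; \bigoplus_{i\neq j}A_i
\]
together with a parallel estimate showing that $\mathrm{Pr}_i R$ is suitably small for $i\neq j$, so that iterating $R$ strictly raises the degree in the $A_j$-component and every chain of length $2N-1$ lands in the zero graded piece. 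Given such a generalized raising property, $R^{2N-1} = 0$ follows exactly as in Theorem~\ref{thmRBMn}.

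The main obstacle is the cross-component interaction. In the simple case the equalities $R(1)R(x) = R^2(x) + R(\cdots)$ and $R(x)R(1) = R^2(x) + R(\cdots)$ yield tight restrictions on $R(x)$ via the centralizer of $J_N$ in $M_N(F)$; in the direct-sum setting these restrictions get contaminated by the terms $\mathrm{Pr}_i R(x)\cdot \mathrm{Pr}_i R(1)$ living in $A_i$ for $i\neq j$. One would have to first exploit the RB-identity on the pairwise products $R(1_i)R(1_l)$ of the central idempotents $1_i\in A_i$ to show that, modulo an $R$-invariant nilpotent subspace, $R$ essentially preserves the decomposition $A = \bigoplus A_i$; once such a decoupling is achieved, Theorem~\ref{thmRBMn} applied to the induced operator on $A_j$ together with the trivial bound $2n_i-1 \leq 2N-1$ on the other summands closes the argument. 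Making this decoupling precise is, in my view, the heart of Conjecture~\ref{Conjecture}.
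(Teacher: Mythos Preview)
The statement you are addressing is labeled \texttt{con} (Conjecture) in the paper, not a theorem; the paper offers no proof of it. Immediately after stating the conjecture, the paper only remarks that it holds in two special cases: $A = M_2(F)\oplus M_2(F)$ and $A = F\oplus\ldots\oplus F$ (the latter by Corollary~\ref{cor8}). There is therefore nothing to compare your argument against.

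Your outline is a reasonable line of attack, and the easy parts are correct: the lower bound via Lemma~\ref{lem9}(i), the reduction to $R^{2N-1}=0$, the nilpotency of $R(1)$ from Lemma~\ref{lem10}, and the observation that when $(R(1))^{N-1}=0$ Lemma~\ref{lem11}(a) already suffices. But the proposal is not a proof, and you acknowledge this yourself in the last paragraph. The decoupling step---showing that, modulo something $R$-invariant and nilpotent, $R$ respects the decomposition $A=\bigoplus A_i$---is unproven, and the grading argument of Theorem~\ref{thmRBMn} genuinely relies on the centralizer of $J_N$ in a \emph{single} matrix block; the cross-terms $\mathrm{Pr}_i R(x)\cdot \mathrm{Pr}_i R(1)$ for $i\neq j$ destroy exactly the equations (the analogues of~\eqref{rb-index:RxR1}) that force $R$ to raise degree. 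Your suggestion to exploit $R(1_i)R(1_l)$ for the central idempotents is natural, but it is not clear it yields enough: those products control only the ``diagonal'' part of the interaction, not how $R$ mixes, say, $M_l\subset A_j$ into $A_i$. So the gap you name is real and, as far as the paper is concerned, open.
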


It is not difficult to verify that Conjecture~\ref{Conjecture}
holds for the case $A = M_2(F)\oplus M_2(F)$
and for the case $A = F\oplus F\oplus \ldots \oplus F$ (it is Corollary~\ref{cor8}).

\subsection{Simple Jordan algebras and composition algebras}

\begin{thm}[\cite{Gub2017}]\label{thm16}
(a) Let $J$ be a (not necessary simple or finite-dimensi\-onal)
Jordan algebra of a bilinear form over a field of characteristic not
two. Then $\rb(J)\leq3$.

(b) Let $F$ be an algebraically closed field of characteristic not two
and $J_{n+1}(f)$ be the simple Jordan algebra of a bilinear
form $f$ over $F$. We have
$\rb(J_{n+1}(f)) = \begin{cases}
2, & n = 2, \\
3, & n\geq3.
\end{cases}$
\end{thm}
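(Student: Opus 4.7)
The plan is to exploit the quadratic structure of $J = F\cdot 1\oplus V$ with product~\eqref{FormProduct}. Since every element satisfies a polynomial of degree at most two, $J$ is algebraic and power-associative, so Lemma~\ref{lem10} forces $R(1)$ to be nilpotent. The minimal polynomial of $\alpha\cdot 1+a$ is $x^2-2\alpha x+(\alpha^2-f(a,a))$, and in characteristic $\neq 2$ nilpotency requires $\alpha=0$ and $f(a,a)=0$. Hence $R(1)=v\in V$ with $v^2 = f(v,v)\cdot 1 = 0$, and by Lemma~\ref{lem1}(d) one gets $R(v)=R^2(1)=\tfrac{1}{2}v^2=0$. (If $v=0$ then Lemma~\ref{lem1}(c) already gives $R^2=0$, so I assume $v\neq 0$ below.)

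For part (a), fix $x=\alpha_x\cdot 1+a_x$ and write $R(x)=\beta\cdot 1+w$ with $w\in V$. The computation $xv = f(a_x,v)\cdot 1+\alpha_x v$ together with $R(v)=0$ gives $R(xv)=f(a_x,v)\,v$, so the RB identity $R(x)R(1)=R(R(x)+xR(1))$ yields
\[
R^2(x)=f(w,v)\cdot 1+(\beta-f(a_x,v))\,v,\qquad R^3(x)=f(w,v)\,v.
\]
It remains to show $f(w,v)=0$. For this I would apply the RB identity to arbitrary $a,b\in V$, writing $R(a)=\beta_a\cdot 1+w_a$ and $R(b)=\beta_b\cdot 1+w_b$: the $V$-components of both sides cancel to leave $(f(w_a,b)+f(a,w_b))\,v=0$. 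Setting $b=v$ and using $\beta_v=w_v=0$ gives $f(w_a,v)=0$ for every $a\in V$, which extends to every $x\in J$ because $R(x)=\alpha_x v+R(a_x)$ has $V$-part $\alpha_x v+w_{a_x}\in v^\perp$. Hence $R^3=0$ and $\rb(J)\leq 3$.

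For part (b), I refine the same data under nondegeneracy. When $n=2$ and $v\neq 0$, the space $v^\perp\cap V$ is one-dimensional and contains $v$ (since $f(v,v)=0$), so $w_a\in Fv$ for every $a\in V$. Comparing $1$-components in $R(a)R(b)=R(R(a)b+aR(b))$ yields $f(w_a,w_b)=\beta_a\beta_b$; but $f$ vanishes on $Fv$, so $\beta_a\beta_b=0$ and every $\beta_a=0$. Thus $\Imm R\subseteq Fv$, forcing $R^2=0$. A non-zero example is $R(e_1)=e_2$ on a hyperbolic basis $e_1,e_2$ of $V$ with $f(e_1,e_2)=1$, $f(e_i,e_i)=0$ (available over an algebraically closed field), giving $\rb(J_3(f))=2$. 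For $n\geq 3$ I would take a three-dimensional nondegenerate $V_0\subseteq V$ with hyperbolic plane $\Span\{e_1,e_2\}$ and orthogonal $e_3$ having $f(e_3,e_3)=1$, and define
\[
R(1)=e_1,\quad R(e_1)=0,\quad R(e_2)=-1-e_3,\quad R(e_3)=e_1,
\]
extended by zero on $V_0^\perp$ via Lemma~\ref{lem9}(h); a direct check confirms $R$ is an RB-operator with $R^2(e_2)=-2e_1\neq 0$, giving $\rb(J_{n+1}(f))=3$ when combined with (a).

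The main obstacle is producing the orthogonality $f(w,v)=0$. Without it, Lemma~\ref{lem11}(b) applied to $v^2=0$ only supplies $R^{3\cdot 2-1}=R^5=0$, and the direct recursion for $R^2$ only brings this down to $R^4=0$; the extra orthogonality appears only after extracting the $V$-component of the RB identity between pairs of vectors in $V$ and setting one of them equal to $v$. The bifurcation between $n=2$ and $n\geq 3$ in (b) is forced because $v^\perp\cap V$ collapses to $Fv$ exactly when $n=2$, killing $R^2$ there.
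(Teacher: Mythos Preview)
Your argument is correct. The paper does not actually prove Theorem~\ref{thm16} in-text; it is quoted from~\cite{Gub2017}, so there is no proof here to compare against directly. That said, your approach is sound and self-contained: from $R(1)=v\in V$ isotropic and $R(v)=0$ you compute $R^2(x)=f(w,v)\cdot 1+(\beta-f(a_x,v))v$ and $R^3(x)=f(w,v)v$, and the decisive step is reading off $(f(w_a,b)+f(a,w_b))v=0$ from the $V$-component of the RB identity on $a,b\in V$, then specializing $b=v$. For~(b) with $n=2$, your use of $v^\perp\cap V=Fv$ together with the $F$-component relation $f(w_a,w_b)=\beta_a\beta_b$ correctly forces $\beta_a=0$ and hence $\Imm R\subseteq Fv$. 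Your explicit operator on $F1\oplus V_0$ for $n\ge 3$ checks out on all pairs of basis vectors, and the extension to $V_0^\perp$ via Lemma~\ref{lem9}(h) is legitimate since $(F1\oplus V_0)\cdot V_0^\perp\subseteq V_0^\perp$.
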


An algebra $A$ over a field $F$ with $\char F\neq2$ is called
a~composition algebra \cite{Nearly} if there exists
a~nondegenerate quadratic form~$n$ on $A$ satisfying
$n(xy) = n(x)n(y)$, $x,y\in A$.

Every unital composition algebra is alternative and quadratic
and has a dimension 1, 2, 4 or 8 over $F$.

Moreover, a composition algebra $A$ is either a division algebra
or a split algebra, depending on the existence
of a nonzero $x\in A$ such that $n(x) = 0$.
A split composition algebra is either $F$, or $F\oplus F$, or $M_2(F)$, or $C(F)$,
the matrix Cayley---Dickson algebra. Let us give the definition
of the product on $C(F) = M_2(F)\oplus v M_2(F)$.
We extend the product from
$M_2(F)$ on $C(F)$ as follows:
\begin{equation}\label{CD-product}
a(vb) = v(\bar{a}b),\quad (vb)a = v(ab),\quad
(va)(vb) = b\bar{a},\quad a,b\in M_2(F).
\end{equation}
Here $\bar{a}$ for $a = (a_{ij})\in M_2(F)$ means the matrix
$\begin{pmatrix}
a_{22} & - a_{12} \\
-a_{21} & a_{11} \end{pmatrix}$.

\begin{thm}
Given a unital composition algebra $C$ over a~field~$F$
of characteristic zero, we have
\begin{equation}
\rb(C) = \begin{cases}
1, & C\ \mbox{is a division algebra or }\dim(C)\leq2, \\
3, & C\ \mbox{is split and}\ \dim(C)= 4,8.
\end{cases}
\end{equation}
\end{thm}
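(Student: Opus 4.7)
The argument splits by the structure of~$C$. When $\dim C = 1$ then $C = F$, and $1 \notin \Imm R$ (Lemma~\ref{lem1}(b)) forces $R = 0$. When $\dim C = 2$ with $C \cong F \oplus F$, apply Corollary~\ref{cor8}. In every remaining subcase of the first line (the remaining $2$-dimensional case is a quadratic field extension, and a division composition algebra of dimension~$4$ or~$8$ is a finite-dimensional alternative---hence power-associative---algebra without zero divisors), Theorem~\ref{thm2} forces $R = 0$, so $\rb(C) = 1$. For $C = M_2(F)$, Theorem~\ref{thmRBMn} directly gives $\rb(M_2(F)) = 2 \cdot 2 - 1 = 3$.

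The heart of the proof is $C = C(F)$. For the upper bound, set $a = R(1)$. By Lemma~\ref{lem10}, $a$ is nilpotent; since every nilpotent element $y$ of a composition algebra satisfies $y^2 = 0$ (from $y^2 = t(y) y - n(y) \cdot 1$ with $t(y) = n(y) = 0$), we have $a^2 = 0$. Lemma~\ref{lem1}(d) then gives $R^n(1) = (R(1))^n/n! = 0$ for $n \geq 2$; in particular $R(a) = R^2(a) = 0$. Substituting $y = 1$ into the RB identity yields $a R(x) = R(a x) + R^2(x)$ and $R(x) a = R^2(x) + R(x a)$, hence
\begin{equation*}
a R(x) + R(x) a = 2 R^2(x) + R(a x + x a) \in \Imm R.
\end{equation*}
On the other hand, the linearization of the quadratic relation in a composition algebra produces $a z + z a = t(z) a + t(a z) \cdot 1$ for every $z \in C$ (use $\bar{a} = -a$, $t(a) = 0$, and $b(a, z) = -t(a z)$ for the polar form). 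Taking $z = R(x)$, the previous displayed inclusion together with $1 \notin \Imm R$ forces $t(a R(x)) = 0$, so $a R(x) + R(x) a = t(R(x)) a$. Applying $R$ to this identity, the right-hand side vanishes (as $R(a) = 0$) while the left-hand side expands, via the two RB-rewritings above, to $2 R^3(x) + R^2(a x + x a)$. Finally, the same quadratic identity with $z = x$ gives $a x + x a = t(x) a + t(a x) \cdot 1$, whence $R^2(a x + x a) = t(x) R^2(a) + t(a x) R^2(1) = 0$. This forces $R^3(x) = 0$ for all $x$.

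For the lower bound $\rb(C(F)) \geq 3$, the plan is to lift the RB-operator $P$ of type (M4) on $M_2(F)$ from Theorem~\ref{thm15} (which satisfies $P^3 = 0$ and $P^2 \neq 0$) to $C(F)$ by declaring it zero on $v M_2(F)$. Lemma~\ref{lem9}(h) with $B = M_2(F)$ and complement $v M_2(F)$ legitimizes this extension: the required hypothesis $B C, C B \subseteq \ker P \oplus C$ reduces to $M_2(F) \cdot v M_2(F), v M_2(F) \cdot M_2(F) \subseteq v M_2(F)$, which is immediate from the Cayley--Dickson product formulas~\eqref{CD-product}.

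The main obstacle is the upper bound in the dimension-$8$ case: Lemma~\ref{lem11}(a) applied with $m = 2$ only yields $R^4 = 0$, and sharpening to $R^3 = 0$ genuinely requires the composition-algebra quadratic identity together with the constraint $1 \notin \Imm R$ to kill the scalar term $t(a R(x)) \cdot 1$ that would otherwise survive in the expansion of $a R(x) + R(x) a$.
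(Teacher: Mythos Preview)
Your proof is correct. The handling of the low-dimensional and division cases is essentially the same as the paper's (you invoke Theorem~\ref{thm2} where the paper uses the slightly less general Theorem~\ref{thm1}, but this is immaterial), and the lower bound for $C(F)$ via Lemma~\ref{lem9}(h) is identical.

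The genuine difference is the upper bound $\rb(C(F))\leq 3$. The paper does not compute this directly: it observes via Statement~\ref{sta5}(a) that any RB-operator on $C(F)$ is an RB-operator on $C(F)^{(+)}$, which is a simple Jordan algebra of bilinear form, and then quotes Theorem~\ref{thm16}(a) (proved elsewhere) to get $R^3=0$. Your argument stays inside $C(F)$ and exploits the quadratic relation directly: from $a^2=0$ you obtain the linearized identity $az+za=t(z)a+t(az)\cdot 1$, and then the constraint $1\notin\Imm R$ kills the scalar term, yielding $R^3=0$ after one more application of~$R$. This is more elementary and self-contained than the paper's route, since it avoids the detour through Jordan algebras and the external citation; the trade-off is that the paper's argument simultaneously explains \emph{why} the bound is~3 structurally (it is inherited from the Jordan-of-bilinear-form bound), whereas yours is a bespoke calculation for composition algebras. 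Your closing remark that Lemma~\ref{lem11}(a) alone only gives $R^4=0$ is exactly right and nicely isolates what the extra input (either the Jordan passage or your quadratic-identity trick) buys.
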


\begin{proof}
If $C$ is a division composition algebra, then we have
$\rb(C) = 1$ by Theorem~\ref{thm1}.
If $C$ is a split composition algebra with $\dim C\leq2$, then
$C = F$ or $C \cong F\oplus F$. By Corollary~\ref{cor8}, $\rb(C) = 1$.

If $C$ is a split composition algebra with $\dim C = 4$, then
$C \cong M_2(F)$ and we are done by Statement~\ref{thmRBMn}.

Finally, let $C$ be a split composition algebra with
$\dim C = 8$, i.e., $C = C(F)$, the matrix Cayley---Dickson algebra.
By Statement~\ref{sta5}(a), each RB-operator on $C(F)$ is an RB-operator
on $C(F)^{(+)}$ which is isomorphic to the
simple Jordan algebra of a bilinear form~\cite[p. 57]{Nearly}.
By Theorem~\ref{thm16}(a), $\rb(C(F))\leq3$.
Extending the RB-operator (M4) from $M_2(F)$ on $C(F)$ by Lemma~\ref{lem9}(h),
we have $\rb(C(F)) = 3$.
\end{proof}

Every simple finite-dimensional Jordan algebra is either
simple division Jordan algebra or of Hermitian, Albert or
Clifford type \cite{McCrimmon}.
If $J$~is a simple finite-dimensional division Jordan algebra,
then $\rb(J) = 1$ by Theorem~\ref{thm2}. The case of simple Jordan algebras
of Clifford type was considered in \cite{Gub2017}, see Theorem~\ref{thm16}.
For one of the cases of Hermitian type, we can say the following

\begin{sta}
We have

(a) $2n-1 \leq \rb( M_n(F)^{(+)})\leq 3n-1$,where $\char F = 0$,

(b) $2n-1 \leq \rb( M_n(\mathbb{H})^{(+)})\leq 3n-1$.
\end{sta}

\begin{proof}
(a) The lower bound follows from Statement~\ref{sta5}(a) and Example~\ref{exm25},
the upper bound follows from Lemma~\ref{lem11}(b).

(b) By the theory of matrices over~$\mathbb{H}$~\cite{Wiegmann,Quater},
the proof is analogous.
\end{proof}

Finally, we study simple Jordan algebras of Albert type.
Over an algebraically closed field $F$, the only simple Jordan algebra
of Albert type is $H_3(C(F))$, the space of Hermitian matrices
over $C(F)$ under the product $a\circ b = ab + ba$.

\begin{thm}
Let $F$ be an algebraically closed field of characteristic zero,
$A$ be the simple Jordan algebra over $F$ of Albert type, then
$5\leq \rb(A)\leq 8$.
\end{thm}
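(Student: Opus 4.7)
The plan is to establish the two inequalities $\rb(A)\le 8$ and $\rb(A)\ge 5$ separately. The upper bound will follow at once from Lemma~\ref{lem11}(b) once we observe that $R(1)^3=0$ for every RB-operator~$R$ of weight zero on~$A$, while the lower bound will be obtained by transporting the RB-operator of Example~\ref{exm25} from $M_3(F)$ along a Jordan embedding $M_3(F)^{(+)}\hookrightarrow A$ and extending it by zero using Lemma~\ref{lem9}(h).

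For the upper bound, the Albert algebra $A=H_3(C(F))$ is a Jordan algebra of degree~$3$: its generic minimal polynomial has degree~$3$ and every element satisfies it, so in particular any nilpotent element $a\in A$ obeys $a^3=0$. Since $A$ is finite-dimensional and hence algebraic, Lemma~\ref{lem10} gives that $R(1)$ is nilpotent, and therefore $R(1)^3=0$. Lemma~\ref{lem11}(b) with $m=3$ now yields $R^{3m-1}=R^8=0$.

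For the lower bound, I begin by realizing $F\oplus F$ as the composition subalgebra of diagonal matrices in $M_2(F)\subset C(F)=M_2(F)\oplus vM_2(F)$; the involution on $F\oplus F$ induced from $C(F)$ is the swap $(a,b)\mapsto(b,a)$. This produces a Jordan subalgebra $B:=H_3(F\oplus F)\subset H_3(C(F))=A$, and writing out the product entrywise gives a Jordan algebra isomorphism $B\cong M_3(F)^{(+)}$. Let $W\subset C(F)$ denote the $6$-dimensional orthogonal complement of $F\oplus F$ with respect to the norm bilinear form; it consists of the antidiagonal matrices in $M_2(F)$ together with $vM_2(F)$. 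Set $C:=\bigoplus_{i<j}W[ij]$, noting that $w[ii]=w+\bar w=0$ for $w\in W$; then $A=B\oplus C$ as vector spaces.

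I take $P$ to be the RB-operator on $M_3(F)$ of Example~\ref{exm25} (with $n=3$); by Statement~\ref{sta5}(a) it is also an RB-operator of weight zero on $M_3(F)^{(+)}\cong B$, and satisfies $P^4(e_{31})=e_{13}\ne 0$. I extend $P$ to $R\colon A\to A$ by $R|_B=P$, $R|_C=0$ and invoke Lemma~\ref{lem9}(h). The main obstacle is to verify its hypothesis $BC+CB\subseteq\ker P\oplus C$. Using~\eqref{CD-product} one checks directly that $(F\oplus F)\cdot W\subseteq W$ and $W\cdot(F\oplus F)\subseteq W$ inside $C(F)$, and via the Peirce product rules for $a[ij]\circ b[jk]$ this propagates to $B\circ C\subseteq C$ and $C\circ B\subseteq C$ in $A$ --- a condition strictly stronger than the one Lemma~\ref{lem9}(h) requires. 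Hence $R$ is a legitimate RB-operator of weight zero on $A$, and $R^4\ne 0$ because $R^4|_B=P^4\ne 0$. Together with the upper bound this gives $5\le\rb(A)\le 8$.
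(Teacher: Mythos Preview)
Your proof is correct and follows the same overall strategy as the paper: the upper bound via $(R(1))^3=0$ together with Lemma~\ref{lem11}(b), and the lower bound by locating a copy of $M_3(F)^{(+)}$ inside $A$ with a complementary $M_3(F)^{(+)}$-module, then extending the RB-operator of Example~\ref{exm25} through Statement~\ref{sta5}(a) and Lemma~\ref{lem9}(h). The one genuine difference is in how that decomposition is obtained. The paper simply invokes the first Tits construction (Jacobson, Chap.~IX, \S12) to write $A=A_1\oplus A_2$ with $A_1\cong M_3(F)^{(+)}$ and $A_2$ an $A_1$-module. You instead work inside the hermitian model $H_3(C(F))$: the sub-composition-algebra $F\oplus F\subset C(F)$ (diagonal matrices, exchange involution) gives $B=H_3(F\oplus F)\cong M_3(F)^{(+)}$, and the orthogonal complement $W=(F\oplus F)^{\perp}$ furnishes $C=\bigoplus_{i<j}W[ij]$; the inclusion $(F\oplus F)W+W(F\oplus F)\subseteq W$ then yields $B\circ C\subseteq C$, which is even stronger than what Lemma~\ref{lem9}(h) requires. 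Your route is more explicit and self-contained within the matrix realization, while the paper's is a one-line citation of a standard structural fact; both produce the same $9+18$ splitting and the same lower bound $\rb(A)\ge 5$.
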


\begin{proof}
As every Jordan algebra of Albert type is cubic, i.e., satisfies a~cubic equation,
so $(R(1))^3 = 0$ for any RB-operator $R$ of weight zero on~$A$.
By Lemma~\ref{lem11}(b), $\rb(A)\leq 8$.
Due to the first Tits construction~\cite[Chap. IX,\,\S~12]{Jacobson},
we have $A = A_1 \oplus A_2$ (as vector spaces) for
$A_1\cong M_3(F)^{(+)}$ and $A_1$-module $A_2$,
Applying Lemma~\ref{lem9}(h) and Statement~\ref{sta5}(a), we have the lower bound
$\rb(A)\geq \rb(M_3(F)) = 5$.
\end{proof}

\begin{rem}
By Example~\ref{exm18}, we have $\rb(\mathrm{K}_3) = 2$
for the Kaplansky superalgebra~$\mathrm{K}_3$.
\end{rem}

\section*{Acknowledgements}

Author is grateful to P.S. Kolesnikov for
the helpful discussions, comments, corrections, and the joint Theorem~\ref{thm4}.
Also, author thanks V.N. Zhelyabin, Yu.N. Maltsev, and Yi Zhang for useful comments.

The author was supported by the Austrian Science Foundation FWF grant P28079 and
by the Program of fundamental scientific researches of
the Siberian Branch of Russian Academy of Sciences, I.1.1, project 0314-2016-0001.

\noindent Vsevolod Gubarev \\
University of Vienna \\
Oskar-Morgenstern-Platz 1, 1090 Vienna, Austria \\
Sobolev Institute of Mathematics \\
Acad. Koptyug ave. 4, 630090 Novosibirsk, Russia \\
e-mail: wsewolod89@gmail.com
\end{document}